\date{}
\newcommand{\ep}{\varepsilon}
\newcommand{\re}{\mathbb{R}}
\newcommand{\n}{\mathbb{N}}
\newcommand{\holder}{H\"older}
\newcommand{\PS}{\mathcal{PS}}
\newcommand{\ek}{E_{\mathrm{Kov}}}
\newcommand{\eh}{E_{\mathrm{Hyp}}}
\newcommand{\et}{E_{\mathrm{Tar}}}
\newcommand{\al}{a_{\lambda}}
\newcommand{\bl}{b_{\lambda}}
\newcommand{\cl}{c_{\lambda}}
\newcommand{\vl}{u_{\lambda}}
\newcommand{\oml}{\omega_{\lambda}}
\newcommand{\psil}{\psi_{\lambda}}
\newcommand{\epl}{\ep_{\lambda}}
\newcommand{\chat}{c_{*}}
\newcommand{\cd}{c_{\delta}}
\newcommand{\PSu}{\mathcal{PS}^{(1)}}
\newcommand{\PSd}{\mathcal{PS}^{(2)}}
\newtheorem{thm}{Theorem}[section]
\newtheorem{thmbibl}{Theorem}
\newtheorem{rmk}[thm]{Remark}
\newtheorem{prop}[thm]{Proposition}
\newtheorem{defn}[thm]{Definition}
\newtheorem{lemma}[thm]{Lemma}
\title{Finite vs infinite derivative loss for abstract wave equations with singular time-dependent propagation speed}
\author{Marina Ghisi\vspace{1ex}\\ 
{\normalsize Università degli Studi di Pisa} \\
{\normalsize Dipartimento di Matematica}\\ 
{\normalsize PISA (Italy)}\\
{\normalsize e-mail: \texttt{marina.ghisi@unipi.it}}
\and
Massimo Gobbino\vspace{1ex}\\ 
{\normalsize Università degli Studi di Pisa} \\
{\normalsize Dipartimento di Ingegneria Civile e Industriale}\\ 
{\normalsize PISA (Italy)}\\  
{\normalsize e-mail: \texttt{massimo.gobbino@unipi.it}}
}
\begin{document}
\maketitle

\begin{abstract}

We consider an abstract wave equation with a propagation speed that depends only on time. We investigate well-posedness results with finite derivative loss in the case where the propagation speed is smooth for positive times, but potentially singular at the initial time. 

We prove that solutions exhibit a finite derivative loss under a family of conditions that involve the blow up rate of the first and second derivative of the propagation speed, in the spirit that the weaker is the requirement on the first derivative, the stronger is the requirement on the second derivative. Our family of conditions interpolates between the two limit cases that were already known in the literature.

We also provide the counterexamples that show that, as soon as our conditions fail, solutions can exhibit an infinite derivative loss. The existence of such pathologies was an open problem even in the two extreme cases.

\vspace{6ex}

\noindent{\bf Mathematics Subject Classification 2010 (MSC2010):} 
35L90 (35L20, 35B30, 35B65).

		
\vspace{6ex}

\noindent{\bf Key words:} 
linear hyperbolic equation, wave equation, propagation speed, finite derivative loss, infinite derivative loss, Baire category, residual set.

\end{abstract}

 
\section{Introduction}

Let $H$ be a real Hilbert space, and let $A$ be a linear nonnegative self-adjoint operator on $H$. We consider, in a time interval $[0,T_{0}]$, the evolution equation
\begin{equation}
u''(t)+c(t)Au(t)=0,
\label{eqn:main}
\end{equation}
with initial data
\begin{equation}
u(0)=u_{0},
\qquad
u'(0)=u_{1}.
\label{eqn:main-data}
\end{equation}

This is an abstract model, for example, of the wave equation
\begin{equation}
u_{tt}-c(t)\Delta u=0
\nonumber
\end{equation}
with suitable boundary conditions in an open set $\Omega\subseteq\re^{d}$. For consistency with this model case, the function $c(t)$ is usually called the ``propagation speed''.

\paragraph{\textmd{\textit{Regularity of the propagation speed vs regularity of solutions}}}

It is well-known that problem (\ref{eqn:main})--(\ref{eqn:main-data}) admits a unique solution for large classes of initial data, even if $c(t)$ is just in $L^{1}((0,T_{0}))$. This solution, however, is very weak and lives in a huge space of hyperdistributions. Let us recall some classical notions concerning the regularity of solutions.

\begin{defn}[Well-posedness vs derivative loss]\label{defn:regularity}
\begin{em}
\mbox{} 

\begin{itemize}

\item  (Well-posedness in Sobolev spaces). Problem (\ref{eqn:main})--(\ref{eqn:main-data}) is said to be \emph{well-posed in Sobolev spaces} if, for every real number $\beta$, and for every pair of initial data $(u_{0},u_{1})\in D(A^{\beta+1/2})\times D(A^{\beta})$, the unique solution belongs to the space
\begin{equation}
C^{0}\left([0,T_{0}],D(A^{\beta+1/2})\right)\cap C^{1}\left([0,T_{0}],D(A^{\beta})\right),
\nonumber
\end{equation}
and there exists a constant $M$, that depends on the propagation speed but not on the solution, such that
\begin{equation}
\|A^{\beta}u'(t)\|^{2}+\|A^{\beta+1/2}u(t)\|^{2}\leq
M\left(\|A^{\beta}u_{1}\|^{2}+\|A^{\beta+1/2}u_{0}\|^{2}\right)
\qquad
\forall t\in[0,T_{0}].
\nonumber
\end{equation}

\item  (Finite derivative loss). Problem (\ref{eqn:main})--(\ref{eqn:main-data}) is said to be well-posed with \emph{finite derivative loss} if there exist positive real numbers $\delta$ and $M$ such that, for every real number $\beta$, and for every pair of initial data $(u_{0},u_{1})\in D(A^{\beta+1/2})\times D(A^{\beta})$, the unique solution satisfies
\begin{equation}
u\in C^{0}\left([0,T_{0}],D(A^{\beta-\delta+1/2})\right)\cap C^{1}\left([0,T_{0}],D(A^{\beta-\delta})\right),
\nonumber
\end{equation}
and 
\begin{equation}
\|A^{\beta-\delta}u'(t)\|^{2}+\|A^{\beta-\delta+1/2}u(t)\|^{2}\leq 
M\left(\|A^{\beta}u_{1}\|^{2}+\|A^{\beta+1/2}u_{0}\|^{2}\right)
\qquad
\forall t\in[0,T_{0}].
\nonumber
\end{equation}

\item (Infinite derivative loss).  Problem (\ref{eqn:main})--(\ref{eqn:main-data}) is said to exhibit an \emph{infinite derivative loss} if there exists a solution $u(t)$ such that 
\begin{equation}
(u(0),u'(0))\in D(A^{\beta+1/2})\times D(A^{\beta})
\qquad
\forall\beta>0,
\nonumber
\end{equation}
but
\begin{equation}
(u(t),u'(t))\not\in D(A^{-\beta+1/2})\times D(A^{-\beta})
\qquad
\forall\beta>0,
\quad
\forall t\in(0,T_{0}].
\nonumber
\end{equation}

\end{itemize}

\end{em}
\end{defn}

We observe that finite derivative loss implies in particular that, when initial data satisfy $(u_{0},u_{1})\in D(A^{\infty})\times D(A^{\infty})$, the solution satisfies $(u(t),u'(t))\in D(A^{\infty})\times D(A^{\infty})$ for every $t\in[0,T_{0}]$, where $D(A^{\infty})$ is defined as the intersection of $D(A^{\beta})$ as $\beta$ ranges over all positive real numbers. In the concrete case of the wave equation this means that initial data of class $C^{\infty}$ give rise to solutions of class $C^{\infty}$. 

On the contrary, infinite derivative loss means that there exists at least one solution whose initial data are in $D(A^{\infty})$, but that for all subsequent times is not even a distribution. We observe that, when such a pathological solution exists, then all solutions that are not orthogonal to it have the same pathological behavior, due to the linearity of the equation. We observe also that, even in the case of solutions with an infinite derivative loss, it might happen that either $u(t)$ or $u'(t)$ are more regular at some special positive times, but they can not be both regular at the same time.

In the case of (\ref{eqn:main}), it is well-known that the regularity of solutions depends on the sign and on the regularity of the propagation speed. Concerning the sign, in the sequel we always assume that the propagation speed satisfies the strict hyperbolicity condition 
\begin{equation}
0<\mu_{1}\leq c(t)\leq\mu_{2}
\qquad
\forall t\in(0,T_{0})
\label{hp:c-sh}
\end{equation}
for suitable positive real numbers $\mu_{1}$ and $\mu_{2}$. Under this assumption, the classical result is that higher ``space'' regularity of initial data compensates lower time-regularity of the propagation speed, as described below.

\begin{itemize}

\item If $c(t)$ is Lipschitz continuous, or more generally with bounded variation, then problem (\ref{eqn:main})--(\ref{eqn:main-data}) is well-posed in Sobolev spaces. 

\item If $c(t)$ is log-Lipschitz continuous, then problem (\ref{eqn:main})--(\ref{eqn:main-data}) is well-posed with finite derivative loss. 

\item  If $c(t)$ is $\alpha$-\holder\ continuous for some $\alpha\in(0,1)$, then problem (\ref{eqn:main})--(\ref{eqn:main-data}) can exhibit an infinite derivative loss. More precisely, the problem is globally well-posed in Gevrey spaces of order $s<(1-\alpha)^{-1}$, locally well-posed in Gevrey spaces of order $s=(1-\alpha)^{-1}$, and pathologies are possible with initial data in Gevrey spaces of order $s>(1-\alpha)^{-1}$, and a fortiori in Sobolev spaces. 

\end{itemize}

These features were observed for the first time by F.~Colombini, E.~De Giorgi and S.~Spagnolo in the seminal paper~\cite{dgcs}, and then widely extended by considering more general continuity moduli or propagation speeds depending also on the space variables (see for example~\cite{2006-JDE-CicCol,2015-CPDE-ColDSaFanMet,1995-Duke-ColLer}). We refer to~\cite[section~2]{gg:dgcs-strong} for a review, in the abstract setting, of the relations between the continuity modulus of the propagation speed and the exact spaces where the problem is well-posed.

\paragraph{\textmd{\textit{Propagation speeds that are singular at the origin}}}

In a different direction, one might be interested in understanding how a singularity of the propagation speed at a single time affects the regularity of solutions. This research direction is motivated also by potential applications to the study of quasilinear problems in a neighborhood of the time when a smooth solution has blow up (see for example~\cite[section~4]{2003-MMAS-Hirosawa}). As a model case, one can consider a propagation speeds $c(t)$ that is of class $C^{1}$ in $(0,T_{0}]$, but whose derivative satisfies
\begin{equation}
|c'(t)|\leq\frac{K}{t^{\theta}}
\qquad
\forall t\in(0,T_{0}]
\label{hp:c'-theta}
\end{equation}
for some positive real numbers $\theta$ and $K$. As always, we assume that the strict hyperbolicity condition (\ref{hp:c-sh}) holds true, and we can also assume that $c(t)$ is continuous up to $t=0$, since pathologies originate from the potential accumulation of oscillations of the propagation speed in the origin. 

Model cases of this type have been studied in the last two decades. If $\theta<1$ there is almost nothing to say, because in this case the propagation speed has finite total variation, and hence we already know from the previous theory that the problem is well-posed in Sobolev spaces. The case with $\theta>1$ was addressed by F.~Colombini, D.~Del Santo and T.~Kinoshita in~\cite{2002-SNS-ColDSaKin}. The result is that the problem is globally well-posed in Gevrey spaces of order $s<\theta/(\theta-1)$, locally well-posed in Gevrey spaces of order $s=\theta/(\theta-1)$, while an infinite derivative loss is possible when initial data are in Gevrey spaces of order $s>\theta/(\theta-1)$.

The borderline case $\theta=1$ is more challenging. A first result in this case had already been obtained in 1990 by T.~Yamazaki~\cite{1990-CPDE-yamazaki}, who proved that (\ref{hp:c'-theta}), together with a bound on the second derivative of the form $|c''(t)|\leq K/t^{2}$, yields well-posedness in Sobolev spaces. More recently, in~\cite{2002-SNS-ColDSaKin} it was proved that the sole assumption on the first derivative is enough to guarantee at least a finite derivative loss.

\begin{thmbibl}[{see~\cite[Theorem~1]{2002-SNS-ColDSaKin}}]\label{thmbibl:c'}

Let $H$ be a real Hilbert space, and let $A$ be a linear nonnegative self-adjoint operator on $H$. 

Let $T_{0}>0$ be a real number, and let $c:(0,T_{0}]\to\re$ be a function of class $C^{1}$ satisfying the strict hyperbolicity condition (\ref{hp:c-sh}) and the estimate 
\begin{equation}
|c'(t)|\leq\frac{K}{t}
\qquad
\forall t\in(0,T_{0}]
\label{hp:c'-1}
\end{equation}
for a suitable real number $K$.

Then problem (\ref{eqn:main})--(\ref{eqn:main-data}) exhibits a finite derivative loss.

\end{thmbibl}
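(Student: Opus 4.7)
The plan is to exploit the spectral decomposition of $A$, reducing problem~(\ref{eqn:main})--(\ref{eqn:main-data}) to the family of scalar second-order linear ODE $u_{\lambda}''(t)+\lambda^{2}c(t)u_{\lambda}(t)=0$, indexed by a spectral parameter $\lambda\geq 0$. Everything then boils down to showing that, for some fixed $\delta>0$, the polynomial-in-$\lambda$ bound
\[
|u_{\lambda}'(t)|^{2}+\lambda^{2}|u_{\lambda}(t)|^{2}\leq M\lambda^{4\delta}\bigl(|u_{\lambda}'(0)|^{2}+\lambda^{2}|u_{\lambda}(0)|^{2}\bigr)
\]
holds uniformly in $\lambda\geq 1$ for every $t\in[0,T_{0}]$; multiplying by $\lambda^{4(\beta-\delta)}$ and reassembling through the spectral measure of $A$ then yields the finite derivative loss statement of Definition~\ref{defn:regularity}.

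The workhorse is the hyperbolic energy $E(t):=|u_{\lambda}'(t)|^{2}+\lambda^{2}c(t)|u_{\lambda}(t)|^{2}$. A direct differentiation, using the ODE to replace $u_{\lambda}''$, shows that the cross terms cancel exactly and leaves the identity $E'(t)=\lambda^{2}c'(t)|u_{\lambda}(t)|^{2}$. Combining this with the bound $\lambda^{2}|u_{\lambda}(t)|^{2}\leq E(t)/\mu_{1}$, which follows from the strict hyperbolicity~(\ref{hp:c-sh}), and with the assumption~(\ref{hp:c'-1}), one obtains the scale-invariant differential inequality $E'(t)\leq(K/(\mu_{1}t))E(t)$ on $(0,T_{0}]$. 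This integrates to
\[
\frac{E(t)}{E(s)}\leq\left(\frac{t}{s}\right)^{K/\mu_{1}}\qquad\forall\,0<s\leq t\leq T_{0},
\]
which extracts the full strength of the hypothesis but cannot be pushed to $s=0$, since $|c'|$ is in general not integrable near the origin. This non-integrability is the main (and essentially only) obstacle the proof has to overcome.

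To absorb the initial singularity, the plan is to split $[0,T_{0}]=[0,1/\lambda^{2}]\cup[1/\lambda^{2},T_{0}]$ for $\lambda\geq 1$ and to control the first piece through the Kovalevskian energy $F(t):=|u_{\lambda}'(t)|^{2}+\lambda^{4}|u_{\lambda}(t)|^{2}$. A crude Cauchy--Schwarz bound using only $c(t)\leq\mu_{2}$ gives $|F'(t)|\leq 2(\lambda^{2}+\mu_{2})F(t)$, so integrating over an interval of length $1/\lambda^{2}$ produces $F(1/\lambda^{2})\leq CF(0)\leq C\lambda^{2}E(0)$. Since $E\leq F$ for $\lambda^{2}\geq\mu_{2}$, this provides $E(1/\lambda^{2})\leq C\lambda^{2}E(0)$, and applying the scale-invariant estimate on the remaining interval $[1/\lambda^{2},T_{0}]$ gives
\[
E(T_{0})\leq(T_{0}\lambda^{2})^{K/\mu_{1}}\cdot E(1/\lambda^{2})\leq C\lambda^{2+2K/\mu_{1}}E(0),
\]
which is the polynomial bound we sought, with $4\delta=2+2K/\mu_{1}$ (the bounded range of $\lambda$ being handled trivially). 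The threshold $1/\lambda^{2}$ is precisely the one that makes the Kovalevskian growth $\exp(C\lambda^{2}\cdot\lambda^{-2})$ bounded while keeping the hyperbolic factor $(T_{0}\lambda^{2})^{K/\mu_{1}}$ as small as possible.
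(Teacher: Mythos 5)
Your argument is correct and is essentially the paper's own strategy for this regime (described in the introduction and carried out in Section~\ref{sec:well-posed} for the Kovaleskyan and hyperbolic regions): a crude energy estimate near $t=0$ on a $\lambda$-dependent interval, followed by the Gronwall bound $E'\leq (K/(\mu_{1}t))E$ for the hyperbolic energy, each contributing only a fixed power of $\lambda$. The only differences are cosmetic --- you split at $1/\lambda^{2}$ with the weight $\lambda^{4}|u_{\lambda}|^{2}$, while the paper splits at $\log\lambda/\lambda$ with the weight $\lambda^{2}|u_{\lambda}|^{2}$ --- and both choices yield the required polynomial bound.
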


The optimality of Theorem~\ref{thmbibl:c'} was not addressed in~\cite{2002-SNS-ColDSaKin}. On the contrary, in the same years many authors obtained again well-posedness results with finite derivative loss under assumptions on the first derivative weaker than (\ref{hp:c'-1}), provided that further conditions are added on the modulus of continuity of $c(t)$ (see for example~\cite{2007-DIE-DSaKinRei}) or on the second derivative. The prototype of these results is the following (compare also with~\cite{2003-MMAS-Hirosawa,2003-MatN-Hirosawa,2003-CPDE-KubRei}).

\begin{thmbibl}[{see~\cite[Theorem~1.1]{2003-BSM-ColDsaRei}}]\label{thmbibl:c''}

Let $H$ be a real Hilbert space, and let $A$ be a linear nonnegative self-adjoint operator on $H$. 

Let $T_{0}\in(0,1)$ be a real number, and let $c:(0,T_{0}]\to\re$ be a function of class $C^{2}$ satisfying the strict hyperbolicity condition (\ref{hp:c-sh}) and the two estimates
\begin{equation}
|c'(t)|\leq K\frac{|\log t|}{t}
\qquad
\forall t\in(0,T_{0}]
\label{hp:c'-log}
\end{equation}
and
\begin{equation}
|c''(t)|\leq K\frac{|\log t|^{2}}{t^{2}}
\qquad
\forall t\in(0,T_{0}]
\label{hp:c''}
\end{equation}
for a suitable positive real number $K$.

Then problem (\ref{eqn:main})--(\ref{eqn:main-data}) exhibits a finite derivative loss.

\end{thmbibl}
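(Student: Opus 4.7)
The plan is to use the spectral resolution of $A$ to reduce (\ref{eqn:main})--(\ref{eqn:main-data}) to the family of scalar Cauchy problems
\[
\vl''(t) + \lambda^{2} c(t)\vl(t) = 0, \qquad \lambda \geq 0,
\]
and to prove that there exist $M > 0$ and $\delta > 0$ for which
\[
|\vl'(t)|^{2} + \lambda^{2}|\vl(t)|^{2} \leq M(1+\lambda)^{2\delta}\bigl(|\vl'(0)|^{2} + \lambda^{2}|\vl(0)|^{2}\bigr) \qquad \forall t\in[0,T_{0}], \ \forall\lambda \geq 0;
\]
this polynomial-in-$\lambda$ bound is equivalent to finite derivative loss in the sense of Definition~\ref{defn:regularity}, and the substance of the argument concerns the high-frequency regime $\lambda \gg 1$.

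For each such $\lambda$ I would fix a threshold $t_{\lambda} \sim (\log \lambda)/\lambda$ and a regularization parameter $\epl \sim 1/(\lambda\log\lambda)$, and split the time interval accordingly. On $[0,t_{\lambda}]$ I would use the Kovalevskian energy $\ek(t) = |\vl'(t)|^{2} + (1+\lambda^{2})|\vl(t)|^{2}$, whose logarithmic derivative is bounded by a multiple of $1+\lambda$: over a time of length $t_{\lambda}$, Gronwall produces a multiplicative loss only of order $\lambda^{C}$. On $[t_{\lambda},T_{0}]$ I would replace $c$ by a mollification $c_{\epl}$ (convolution with a standard kernel supported in $(0,\epl)$) and work with the Tarama-type energy
\[
\et(t) = \frac{|\vl'(t)|^{2}}{\sqrt{c_{\epl}(t)}} + \lambda^{2}\sqrt{c_{\epl}(t)}\,|\vl(t)|^{2} - \frac{1}{\lambda}\frac{c_{\epl}'(t)}{c_{\epl}(t)^{3/2}}\operatorname{Re}\bigl(\vl'(t)\overline{\vl(t)}\bigr),
\]
whose correction term is tailored so that, after invoking the ODE to eliminate $\vl''$, the worst contribution coming from $c'_{\epl}$ cancels in the computation of $\et'(t)$, and what survives is controlled by $|c''_{\epl}|/(\lambda c_{\epl}^{3/2})$ together with the approximation error $\lambda\,|c - c_{\epl}|/\sqrt{c_{\epl}}$.

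From the bounds (\ref{hp:c'-log})--(\ref{hp:c''}) and standard mollifier estimates, one obtains for $t \geq \epl$ that
\[
|c_{\epl}'(t)| \lesssim \frac{|\log t|}{t}, \quad |c_{\epl}''(t)| \lesssim \frac{(\log t)^{2}}{t^{2}}, \quad |c(t) - c_{\epl}(t)| \lesssim \epl\,\frac{|\log t|}{t},
\]
and combining them with (\ref{hp:c-sh}) leads to an estimate of the shape
\[
\left|\frac{\et'(t)}{\et(t)}\right| \lesssim \frac{(\log t)^{2}}{\lambda\,t^{2}} + \lambda\,\epl\,\frac{|\log t|}{t}
\]
on $[t_{\lambda},T_{0}]$. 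Integration of both terms with the calibrated choices of $t_{\lambda}$ and $\epl$ above keeps the total bounded by a multiple of $\log\lambda$, so that $\et(T_{0}) \leq \lambda^{C}\et(t_{\lambda})$; matching $\et$ with the hyperbolic energy $|\vl'|^{2} + \lambda^{2}|\vl|^{2}$ at $t_{\lambda}$ (they are comparable up to constants depending on $\mu_{1},\mu_{2}$) closes the argument. The main obstacle is the simultaneous calibration of $t_{\lambda}$ and $\epl$: the correction term in $\et$ is a genuine lower-order perturbation, and $\et$ is equivalent to the classical hyperbolic energy, only under the compatibility condition $|c_{\epl}'(t)|/\lambda \leq C\,c_{\epl}(t)$, roughly $|\log t|/t \lesssim \lambda$, which is exactly what forces $t_{\lambda}$ to exceed $1/\lambda$ by a logarithmic factor, and in turn dictates how small $\epl$ must be for the $\lambda\epl|\log t|/t$ contribution to integrate to $\log\lambda$ rather than to blow up.
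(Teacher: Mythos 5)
Your strategy is essentially the one the paper uses for this borderline case (it is the two--zone specialization of the proof of Proposition~\ref{prop:main-energy}: Kovalevskian energy up to $t_{\lambda}\sim(\log\lambda)/\lambda$, then a Tarama energy, with both key integrals evaluating to $O(\log\lambda)$), and your calibration of $t_{\lambda}$ and the discussion of the compatibility condition $|c'|\lesssim\lambda$ are exactly right. Two remarks. First, the correction term in your energy is mis-scaled: for the normalization $|\vl'|^{2}/\sqrt{c_{\epl}}+\lambda^{2}\sqrt{c_{\epl}}\,|\vl|^{2}$ the cross term must be $+\tfrac{c_{\epl}'}{2c_{\epl}^{3/2}}\operatorname{Re}(\vl'\overline{\vl})$, i.e.\ the expansion of Tarama's perfect square $\tfrac{1}{\sqrt{c_{\epl}}}\bigl|\vl'+\tfrac{c_{\epl}'}{4c_{\epl}}\vl\bigr|^{2}$, with no factor $1/\lambda$; the term you wrote, being smaller by a factor $\lambda$, produces upon differentiation only an $O\bigl(|c_{\epl}'|E/(\lambda c_{\epl})\bigr)$ contribution and therefore cannot cancel the $O\bigl(|c_{\epl}'|E/c_{\epl}\bigr)$ terms left over from differentiating $\sqrt{c_{\epl}}$, which would reinstate a non-integrable $|\log t|/t$ in the logarithmic derivative. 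Since everything downstream in your argument (the surviving terms $|c_{\epl}''|/\lambda$ and $\lambda|c-c_{\epl}|$, the resulting differential inequality, and the choice of $\epl$) is consistent with the correct cross term, this reads as a slip rather than a conceptual gap, but it must be fixed. Second, the mollification is redundant here: since $c$ is assumed of class $C^{2}$ with the bound (\ref{hp:c''}), one can run the Tarama energy with $c$ itself and drop the approximation-error term entirely (the paper instead replaces $c$ by $c^{2}$ purely to avoid square roots); your mollified variant is a legitimate alternative and costs only the extra term $\lambda\epl|\log t|/t$, which your choice $\epl\sim1/(\lambda\log\lambda)$ correctly keeps at $O(\log\lambda)$ after integration.
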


A partial step toward optimality of these results was obtained again in~\cite{2003-BSM-ColDsaRei}.

\begin{thmbibl}[{see~\cite[Theorems~1.2 and~1.3]{2003-BSM-ColDsaRei}}]\label{thmbibl:counter}

Let $\omega:(0,T_{0}]\to(0,+\infty)$ be a nonincreasing function such that $\omega(t)\to +\infty$ as $t\to 0^{+}$.

Then there exists a propagation speed $c:(0,T_{0}]\to\re$ of class $C^{\infty}$, continuous up to $t=0$, that satisfies the strict hyperbolicity condition (\ref{hp:c-sh}) and the two estimates
\begin{equation}
|c'(t)|\leq \frac{|\log t|}{t}\omega(t)
\qquad
\forall t\in(0,T_{0}]
\nonumber
\end{equation}
and
\begin{equation}
|c''(t)|\leq \frac{|\log t|^{2}}{t^{2}}\omega(t)
\qquad
\forall t\in(0,T_{0}],
\nonumber
\end{equation}
and for which problem (\ref{eqn:main})--(\ref{eqn:main-data}) exhibits an infinite derivative loss.

\end{thmbibl}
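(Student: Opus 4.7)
My plan is to adapt the classical Colombini--De Giorgi--Spagnolo resonance construction to the singular setting at $t=0$. I assume, as in the essential case, that $A$ admits an orthonormal sequence of eigenvectors $\{e_{n}\}$ with eigenvalues $\lambda_{n}\to+\infty$ (the general case reduces to this one via the spectral theorem and approximate eigenvectors). The idea is to build $c(t)$ as a constant background plus a disjoint family of smooth localized oscillatory bumps supported near times $t_{n}\searrow 0^{+}$, each bump tuned in frequency to match one eigenvalue $\lambda_{n}$, so as to trigger parametric resonance in exactly one Fourier mode at a time.

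Concretely, pick widths $\delta_{n}$ with the intervals $I_{n}=[t_{n}-\delta_{n},t_{n}+\delta_{n}]$ pairwise disjoint in $(0,T_{0}]$, and set
\[
c(t)=1+\ep_{n}\,b\!\left(\frac{t-t_{n}}{\delta_{n}}\right)\sin\!\bigl(2\sqrt{\lambda_{n}}(t-t_{n})\bigr)
\qquad\text{on }I_{n},
\]
with a fixed smooth cut-off $b$ supported in $[-1,1]$, and $c(t)\equiv 1$ elsewhere. The required bounds on $c'$ and $c''$ translate into
\[
\ep_{n}\sqrt{\lambda_{n}}\leq\frac{|\log t_{n}|}{t_{n}}\,\omega(t_{n}),
\qquad
\ep_{n}\lambda_{n}\leq\frac{|\log t_{n}|^{2}}{t_{n}^{2}}\,\omega(t_{n}),
\]
the second being binding. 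Keeping $\ep_{n}\equiv\mu$ small (to ensure strict hyperbolicity), this fixes the scale $\sqrt{\lambda_{n}}\sim \mu^{-1/2}\,|\log t_{n}|\,t_{n}^{-1}\,\sqrt{\omega(t_{n})}$, so that the first bound is satisfied with multiplicative slack $\sqrt{\omega(t_{n})}$. Choosing $\delta_{n}=t_{n}/4$ keeps the intervals disjoint once $t_{n}$ is taken to decrease fast enough, and fixes the residual scales.

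The core of the argument is the resonance estimate. On $I_{n}$, the $n$-th Fourier coefficient $u_{n}(t)$ obeys a Hill-type equation $u_{n}''+\lambda_{n}(1+\ep_{n}\psi_{n}(t))u_{n}=0$ whose perturbation is exactly at the resonant frequency $2\sqrt{\lambda_{n}}$. A slowly-varying-amplitude analysis (equivalently, Floquet/energy method) shows that $u_{n}$ is amplified across $I_{n}$ by a factor
\[
G_{n}\sim\exp\!\bigl(c\,\mu\sqrt{\lambda_{n}}\,\delta_{n}\bigr)
\sim\exp\!\bigl(c'\sqrt{\mu\,\omega(t_{n})}\,|\log t_{n}|\bigr),
\]
which, thanks to $\omega(t_{n})\to+\infty$, satisfies $\log G_{n}/\log\lambda_{n}\to+\infty$, i.e., $G_{n}$ dominates every polynomial in $\lambda_{n}$. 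The off-resonant modes see rapidly oscillating coefficients and their energies are essentially conserved. Choosing $u(0)=\sum_{n}a_{n}e_{n}$ with $a_{n}^{2}\sim G_{n}^{-1}$ places $(u(0),u'(0))$ in $D(A^{\infty})\times D(A^{\infty})$, since $\sum_{n}\lambda_{n}^{\beta}/G_{n}<+\infty$ for every $\beta$, while at any $t_{0}\in(0,T_{0}]$ the infinitely many bumps contained in $(0,t_{0})$ force $\sum_{n}\lambda_{n}^{1-2\beta}G_{n}=+\infty$ for every $\beta$, producing the infinite derivative loss.

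The main obstacle is the quantitative parametric resonance estimate, which must hold uniformly in $n$ and must also track the \emph{transit} of $u_{n}$ through the constant-coefficient regions between bumps, so that the phases arriving at $I_{n}$ are arranged for constructive (not destructive) interaction with the resonant perturbation. A secondary delicate point is the smooth matching of the bumps to the constant background: the cut-off $b$ and the parameter $\delta_{n}$ have to be chosen so that both $c'$ and $c''$ remain under the prescribed envelopes \emph{uniformly}, including at the edges of each $I_{n}$. Alternatively, once the single-bump amplification is established, a Baire category argument on a suitable complete metric space of admissible propagation speeds, as hinted by the paper's keywords, would deliver a residual set of $c(t)$'s exhibiting infinite derivative loss, bypassing the explicit combinatorial construction of the initial datum.
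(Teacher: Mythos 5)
Your scaling analysis is sound: with constant amplitude $\ep_{n}=\mu$ the binding constraint is indeed the one on $c''$, it forces $\sqrt{\lambda_{n}}\,t_{n}\lesssim\mu^{-1/2}|\log t_{n}|\sqrt{\omega(t_{n})}$, and the resulting resonant gain has logarithm $\sim\sqrt{\mu\,\omega(t_{n})}\,|\log t_{n}|\gg\log\lambda_{n}$, so the extra factor $\omega$ in the hypotheses is exactly what makes the classical constant-amplitude resonance mechanism sufficient here (consistently with the paper's remark that only its sharper Theorems~\ref{thm:counter-c'} and~\ref{thm:counter-c''} require the $t$-dependent, singular amplitude $\epl(t)\sim\oml/t$). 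However, there is a genuine gap, and it is the one you yourself flag without closing: in an ``all bumps at once'' construction, the mode $u_{n}$ must traverse the infinitely many off-resonant bumps $I_{m}$, $m\neq n$ (in particular all $I_{m}$ with $m>n$, which lie between $0$ and $I_{n}$), and your claim that its energy is ``essentially conserved'' there is not free. The only estimate available without further structure is the hyperbolic-energy bound $E(t)\leq E(s)\exp\bigl(\int|c'|/c\bigr)$, and on each bump $\int_{I_{m}}|c'|/c\sim\mu\sqrt{\lambda_{m}}\,\delta_{m}\to+\infty$; so the crude bound diverges in both directions and you need a quantitative non-resonant (averaging/stationary-phase) lemma, uniform over the infinite family, together with the phase-alignment control you mention. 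This, plus the rigorous version of the ``slowly-varying-amplitude'' amplification with the envelope $b$, is precisely the technical content of such proofs, and neither is supplied. There is also a small but real slip in the final counting: if $G_{n}$ denotes the energy gain, then $a_{n}^{2}\sim G_{n}^{-1}$ gives $a_{n}^{2}G_{n}\lambda_{n}^{-2\beta}=\lambda_{n}^{-2\beta}\to0$, so the series does \emph{not} diverge; you need $a_{n}^{2}\sim G_{n}^{-1/2}$ (compare the paper's choice $a_{n}=\exp(-\phi(\lambda_{n})/4)$ against a gain $\exp(\phi(\lambda_{n}))$ in Proposition~\ref{prop:ua2idl}).

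For comparison, the paper never proves this statement by the direct multi-bump route (it is quoted from Colombini--Del Santo--Reissig, and also follows from Theorem~\ref{thm:counter-c''}); its Section~\ref{sec:counterexamples} machinery avoids both gaps by design. Each ``asymptotic activator'' $\cl$ carries a \emph{single} bump tuned to the single frequency $\lambda$, with the resonant solution written in closed form (so no Floquet asymptotics are needed), and the bumps are combined either by Baire category (Proposition~\ref{prop:as2un}) or by an iteration in which each new bump is small in the \emph{uniform} norm, so that Lemma~\ref{lemma:cont-dep} (continuous dependence of solutions on the coefficient in $L^{1}$, for each fixed frequency) guarantees that the finitely many previously activated frequencies keep their strict lower bounds. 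This replaces the uniform off-resonance estimate by soft continuity, at the price of a diagonal choice of parameters. Your closing remark about a Baire category alternative points in exactly this direction; developing it (dense set of initially constant speeds, single-bump activators converging to them, closedness of the ``non-activator'' sets via continuous dependence) would turn your sketch into a complete proof.
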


At least three questions remained unsolved.

\begin{enumerate}

\renewcommand{\labelenumi}{(Q\arabic{enumi})}

\item  Considering only assumptions on first derivatives, determine whether (\ref{hp:c'-log}), or any  condition stronger than (\ref{hp:c'-1}) but weaker than (\ref{hp:c'-log}), is enough to guarantee alone a finite derivative loss.

\item  If the answer to the previous question is negative, determine whether a bound on the second derivative weaker than (\ref{hp:c''}), in addition to (\ref{hp:c'-log}), is enough to guarantee a finite derivative loss.

\item  The positive results of Theorem~\ref{thmbibl:c'} and Theorem~\ref{thmbibl:c''} seem to be somewhat isolated and independent, in the sense that the conclusion is the same, but the assumptions are different and none of them implies the other. In these cases it reasonable to expect the existence of a more general result that in some sense ``connects them''.
 
\end{enumerate}

In this paper we address the three questions above, and in particular we fill the gap between the positive results of Theorems~\ref{thmbibl:c'} and~\ref{thmbibl:c''}, and the counterexamples of  Theorem~\ref{thmbibl:counter}.

\paragraph{\textmd{\textit{Our contribution}}}

The first result of this paper is a sort of bridge between Theorem~\ref{thmbibl:c'} and Theorem~\ref{thmbibl:c''}. Indeed, in Theorem~\ref{thm:main-fdl} we consider propagation speeds whose first time-derivative grows more than in (\ref{hp:c'-1}) but less than in (\ref{hp:c'-log}), and again we prove well-posedness with finite derivative loss provided that the second derivative of the propagation speed satisfies a suitable estimate. Roughly speaking, the more we ask on the first derivative, the less we need to ask on the second derivative. This result answers question (Q3).

The second contribution of this paper is a family of counterexamples. In particular, in Theorem~\ref{thm:counter-c'} we show the optimality of Theorem~\ref{thmbibl:c'}, while in Theorem~\ref{thm:counter-c''} we show the optimality of Theorem~\ref{thmbibl:c''} and of our  Theorem~\ref{thm:main-fdl}. In other words, the answer to questions (Q1) and (Q2) is negative, and the previous gap between positive and negative results is now entirely filled by counterexamples.

\paragraph{\textmd{\textit{Overview of the technique}}}

From the technical point of view, the spectral theorem reduces (\ref{eqn:main}) to the family of ordinary differential equations
\begin{equation}
u_{\lambda}''(t)+\lambda^{2}c(t)u_{\lambda}(t)=0,
\label{eqn:u-lambda}
\end{equation}
where $\lambda$ is a positive real parameter. Proving that (\ref{eqn:main}) is well-posed in some class of initial data is equivalent to estimating the growth of solutions to (\ref{eqn:u-lambda}), namely to finding an estimate of the form 
\begin{equation}
|u_{\lambda}'(t)|^{2}+\lambda^{2}|u_{\lambda}(t)|^{2}\leq
\left(|u_{\lambda}(0)|^{2}+\lambda^{2}|u_{\lambda}(0)|^{2}\right)\cdot\phi(\lambda)
\qquad
\forall t\in[0,T_{0}]
\label{est:comp-leq}
\end{equation}
for a suitable function $\phi(\lambda)$. The growth of $\phi(\lambda)$ as $\lambda\to+\infty$ determines the spaces where (\ref{eqn:main}) is well-posed. For example, if $\phi(\lambda)$ is bounded, then (\ref{eqn:main}) is well-posed in Sobolev spaces, while if $\phi(\lambda)$ has an exponential growth of the form $\exp(M\lambda^{1/s})$ for some positive real numbers $M$ and $s$, then (\ref{eqn:main}) is well-posed in Gevrey spaces of order~$s$. 

In this paper we are interested in the case where $\phi(\lambda)$ grows as a power of $\lambda$, because this implies that (\ref{eqn:main}) is well-posed with finite derivative loss. More precisely, in Proposition~\ref{prop:main-energy} we show that, under the assumptions of our Theorem~\ref{thm:main-fdl}, there exist three positive real numbers $\delta$, $M$, $\lambda_{0}$ such that solutions to (\ref{eqn:u-lambda}) satisfy
\begin{equation}
|u_{\lambda}'(t)|^{2}+\lambda^{2}|u_{\lambda}(t)|^{2}\leq
M\left(|u_{\lambda}(0)|^{2}+\lambda^{2}|u_{\lambda}(0)|^{2}\right)\cdot\lambda^{\delta}
\qquad
\forall t\in[0,T_{0}],\quad\forall\lambda\geq\lambda_{0}.
\nonumber
\end{equation}

In the case of Theorem~\ref{thmbibl:c'}, this estimate can be obtained by exploiting an estimate for the Kovaleskyan energy
\begin{equation}
\ek(t):=|u_{\lambda}'(t)|^{2}+\lambda^{2}|u_{\lambda}(t)|^{2}
\label{defn:ek}
\end{equation}
in some interval $[0,\al]$, followed by an estimate for the hyperbolic energy
\begin{equation}
\eh(t):=|u_{\lambda}'(t)|^{2}+\lambda^{2}c(t)|u_{\lambda}(t)|^{2}
\label{defn:eh}
\end{equation}
in the interval $[\al,T_{0}]$, where $\al$ is chosen in a suitable ($\lambda$-dependent) way.

In the case of Theorem~\ref{thmbibl:c''}, estimate (\ref{est:comp-leq}) can be obtained by exploiting again the Kovaleskyan energy (\ref{defn:ek}) in some interval $[0,\bl]$, followed by an estimate of the energy
\begin{equation}
\et(t):=\frac{1}{\sqrt{c(t)}}\left(u_{\lambda}'(t)+\frac{c'(t)}{4c(t)}u_{\lambda}(t)\right)^{2}+\lambda^{2}\sqrt{c(t)}\,|u_{\lambda}(t)|^{2}
\label{defn:et-sqrt}
\end{equation}
(introduced by S.~Tarama in \cite{2007-EJDE-tarama}) in the interval $[\bl,T_{0}]$, where again $\bl$ has to be chosen in a suitable way. The same technique delivers a proof of Yamazaki's well-posedness result in Sobolev spaces.

In the case of our Theorem~\ref{thm:main-fdl} we have to mix the two strategies, and indeed we prove the key estimate (\ref{est:comp-leq}) by estimating the Kovaleskyan energy (\ref{defn:ek}) in some interval $[0,\al]$, then the hyperbolic energy (\ref{defn:eh}) in a subsequent interval $[\al,\bl]$, and finally the Tarama energy (\ref{defn:et-sqrt}) in the last interval $[\bl,T_{0}]$. Again we have to choose $0<\al<\bl<T_{0}$ in a suitable ($\lambda$-dependent) way. 

Concerning the construction of counterexamples, the infinite derivative loss follows from the existence of a propagation speed that realizes some sort of non-polynomial growth on a sequence of solutions to (\ref{eqn:u-lambda}). More precisely, we need a propagation speed $c(t)$, a sequence of real numbers $\lambda_{k}\to +\infty$, and a function $\phi(\lambda_{k})$ that grows faster than any power of $\lambda_{k}$, such that (\ref{eqn:u-lambda}) admits nontrivial solutions that for every $t\in(0,T_{0}]$ satisfy
\begin{equation}
|u_{\lambda_{k}}'(t)|^{2}+\lambda_{k}^{2}|u_{\lambda_{k}}(t)|^{2}\geq
\left(|u_{\lambda_{k}}(0)|^{2}+\lambda_{k}^{2}|u_{\lambda_{k}}(0)|^{2}\right)\cdot\phi(\lambda_{k})
\nonumber
\end{equation}
for infinitely many indices $k$, possibly dependent on $t$. We call these special propagation speeds ``universal activators'', because they cause in the same time the growth of a sequences of solutions. The construction of universal activators requires the combination of special solutions that exhibit the required growth just for a specific value of $\lambda$, and that we call ``asymptotic activators''.

In the case of~\cite{dgcs}, asymptotic activators were produced by observing that
\begin{equation}
w_{\lambda}(t):=
\sin(\gamma\lambda t)\exp\left(2\ep\gamma\lambda t-\ep\sin(2\gamma\lambda t)\strut\right)
\nonumber
\end{equation}
grows exponentially and solves (\ref{eqn:u-lambda}) with
\begin{equation}
c(t):=\gamma^{2}-8\ep\gamma^{2}\sin(2\gamma\lambda t)-16\ep^{2}\gamma^{2}\sin^{4}(\gamma\lambda t).
\nonumber
\end{equation}

Most counterexamples quoted in this introduction, including those in Theorem~\ref{thmbibl:counter}, are based on special solutions of this kind, combined in different ways by choosing $\ep$ as a function of $\lambda$. Unfortunately, in this borderline case these special solutions are not enough, and this explains the gap between the positive results and the counterexamples exhibited so far. 

In our construction we need to modify this scheme by choosing $\ep$ to be dependent also on $t$, and singular as $t\to 0^{+}$. Roughly speaking, $\ep(t)$ has to blow up in the origin as fast as the upper bound that is imposed on $|c'(t)|$. We refer to subsection~\ref{sec:as-act-icf} for the details of our construction of asymptotic activators.

Finally, asymptotic activators can be combined in at least two different ways in order to produce a universal activator. The classical approach introduced in~\cite{dgcs} requires a clever iteration procedure with many parameters to be chosen appropriately. Here we follow a more indirect path introduced in~\cite{gg:residual,gg:dgcs-critical} that leaves the dirty job to Baire category theorem and delivers \emph{not just a single counterexample, but a residual set of them}. In other words, \emph{when the assumptions that guarantee a finite derivative loss are not satisfied, the infinite derivative loss is the common behavior of solutions.}

\paragraph{\textmd{\textit{Structure of the paper}}}

This paper is organized as follows. In section~\ref{sec:statements} we state our main results. In section~\ref{sec:well-posed} we prove our well-posedness result that interpolates Theorem~\ref{thmbibl:c'} and Theorem~\ref{thmbibl:c''}. Finally, in section~\ref{sec:counterexamples} we construct the counterexamples that show the optimality of our (and previous) results.


\setcounter{equation}{0}
\section{Statements}\label{sec:statements}

\subsection{Well-posedness with finite derivative loss}

The first contribution of this paper is the following result, where we clarify the exact balance between the bounds on first and second derivatives of the propagation speed that yields a finite derivative loss.

\begin{thm}[First and second derivatives vs finite derivative loss]\label{thm:main-fdl}

Let $H$ be a real Hilbert space, and let $A$ be a linear nonnegative self-adjoint operator on $H$. 

Let $T_{0}\in(0,1)$ be a real number, and let $\omega:(0,T_{0}]\to(0,+\infty)$ and $\psi:(0,T_{0}]\to(0,+\infty)$ be two nonincreasing functions such that
\begin{equation}
\omega(t)(1+\psi(t))\leq K_{0}|\log t|
\qquad
\forall t\in(0,T_{0}]
\label{hp:omega-psi}
\end{equation}
for a suitable constant $K_{0}$.

Let $c:(0,T_{0}]\to\re$ be a function of class $C^{2}$ satisfying the strict hyperbolicity condition (\ref{hp:c-sh}) and the two estimates
\begin{equation}
|c'(t)|\leq\frac{\omega(t)}{t}
\qquad
\forall t\in(0,T_{0}]
\label{hp:fdl-c'}
\end{equation}
and
\begin{equation}
|c''(t)|\leq\frac{\omega(t)^{2}}{t^{2}}\exp(\psi(t))
\qquad
\forall t\in(0,T_{0}].
\label{hp:fdl-c''}
\end{equation}

Then problem (\ref{eqn:main})--(\ref{eqn:main-data}) exhibits a finite derivative loss in the sense of Definition~\ref{defn:regularity}.

\end{thm}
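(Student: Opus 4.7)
The plan is to reduce (\ref{eqn:main}) to the scalar family (\ref{eqn:u-lambda}) via the spectral theorem of $A$, and then to prove a polynomial-in-$\lambda$ bound of the shape announced in the introduction. The heart of the argument is a three-stage energy estimate on a $\lambda$-dependent decomposition $0 < \al < \bl \le T_{0}$, with the Kovaleskyan, hyperbolic and Tarama energies used successively on the three subintervals.

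On the first stage $[0,\al]$, where $c(t)$ is merely bounded above and below by (\ref{hp:c-sh}), I would control $\ek$ from (\ref{defn:ek}) via $|\ek'(t)| \le \lambda|1-c(t)|\ek(t)$, obtained by differentiating along (\ref{eqn:u-lambda}) and applying AM--GM to the cross term. Polynomial control of $\ek(\al)/\ek(0)$ then forces the choice $\al$ of order $(\log\lambda)/\lambda$. On the second stage $[\al,\bl]$ I would switch to $\eh$ of (\ref{defn:eh}); here a direct computation gives $\eh'(t)=\lambda^{2}c'(t)|\vl(t)|^{2}$, so that $|\eh'(t)|/\eh(t) \le |c'(t)|/c(t) \le \omega(t)/(\mu_{1}t)$ by (\ref{hp:fdl-c'}) and (\ref{hp:c-sh}). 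Since $\omega$ is nonincreasing one obtains $\log(\eh(\bl)/\eh(\al)) \le \mu_{1}^{-1}\omega(\al)\log(\bl/\al)$, so that polynomial growth in this stage is equivalent to the constraint $\omega(\al)\log(\bl/\al) = O(\log\lambda)$.

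On the third stage $[\bl,T_{0}]$ I would invoke $\et$ from (\ref{defn:et-sqrt}), whose corrector $c'(t)/(4c(t))$ is designed precisely so that the $\lambda^{2}c'$ term driving the hyperbolic growth is cancelled upon differentiation. A standard Tarama-type computation produces a bound of the shape
\begin{equation}
\frac{|\et'(t)|}{\et(t)} \le \frac{C}{\lambda}\cdot\frac{|c''(t)|}{c(t)^{3/2}} + C\cdot\frac{|c'(t)|^{2}}{c(t)^{5/2}},
\nonumber
\end{equation}
so that after plugging in (\ref{hp:fdl-c'}) and (\ref{hp:fdl-c''}) and using the monotonicity of $\omega$ and $\psi$, the integral from $\bl$ to $T_{0}$ is dominated, up to multiplicative constants, by $\omega(\bl)^{2}\exp(\psi(\bl))/(\lambda\bl)$. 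The second constraint on the pair $(\al,\bl)$ is thus that this last quantity also be of order $\log\lambda$.

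The main obstacle will be the simultaneous calibration of $\al$ and $\bl$, together with the comparability of the three energies at the matching points: the Kovaleskyan/hyperbolic matching at $\al$ is automatic from (\ref{hp:c-sh}), while the hyperbolic/Tarama matching at $\bl$ reduces to a polynomial bound on $|c'(\bl)|^{2}/(c(\bl)^{3}\lambda^{2})$, again controlled by (\ref{hp:fdl-c'}). The hypothesis (\ref{hp:omega-psi}) is precisely the compatibility condition that allows both constraints to be met by some admissible $(\al,\bl)$: when $\omega$ is nearly constant one can take $\bl \approx T_{0}$ and recover Theorem~\ref{thmbibl:c'}; when $\omega$ is of nearly logarithmic size one must take $\bl$ very close to $\al$ and the Tarama stage carries essentially all the weight, as in Theorem~\ref{thmbibl:c''}; intermediate sizes of $\omega$ genuinely require all three stages, which is where the interest of the interpolation statement lies.
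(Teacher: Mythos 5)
Your proposal follows essentially the same route as the paper: the same three-region splitting $[0,\al]\cup[\al,\bl]\cup[\bl,T_{0}]$ with the Kovaleskyan, hyperbolic and Tarama energies, the same calibration constraints $\al\sim(\log\lambda)/\lambda$, $\omega(\al)\log(\bl/\al)=O(\log\lambda)$ and $\omega(\bl)^{2}\exp(\psi(\bl))/(\lambda\bl)=O(\log\lambda)$, which the paper realizes by taking $\bl=\al\exp(\psi(1/\lambda))$ so that (\ref{hp:omega-psi}) is exactly the compatibility condition, together with the bound $|c'(t)|\lesssim\lambda$ on $[\bl,T_{0}]$ that makes $\et$ and $\ek$ equivalent there. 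One slip: in your displayed Tarama differential inequality the factor $1/\lambda$ must multiply the $|c'(t)|^{2}$ term as well (the prefactor in $\et'$ is of size $\et/\lambda$ and multiplies the whole combination $c''-\tfrac{3}{2}c'^{2}/c$); without it the $|c'|^{2}$ contribution integrates to roughly $\lambda\log\lambda\exp(-\psi(1/\lambda))$ and the argument would fail, but your subsequent integral bound already presupposes the corrected form, so this is a typo rather than a gap.
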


Let us comment on the assumptions of Theorem~\ref{thm:main-fdl}. First of all, we show that these assumptions range between the two extremes represented by the assumptions of Theorem~\ref{thmbibl:c'} and Theorem~\ref{thmbibl:c''}.

\begin{rmk}[From Theorem~\ref{thmbibl:c'} to Theorem~\ref{thmbibl:c''}]
\begin{em}

The statement of Theorem~\ref{thm:main-fdl} admits the possibility that $\omega(t)$ is bounded. In this limit case, however, we enter the realm of Theorem~\ref{thmbibl:c'} because assumption (\ref{hp:fdl-c'}) reduces to (\ref{hp:c'-1}), and this is enough to guarantee a finite derivative loss, without even assuming the existence of second derivatives. In other words, Theorem~\ref{thm:main-fdl} says something new only when $\omega(t)\to +\infty$ as $t\to 0^{+}$.

On the opposite side, we observe that (\ref{hp:omega-psi}) implies in particular that $\omega(t)\leq K_{0}|\log t|$, which means that the maximum blow up rate that is allowed for $c'(t)$ is the one assumed in Theorem~\ref{thmbibl:c''}. Moreover, when $\omega(t)\sim K|\log t|$, then $\psi(t)$ is necessarily bounded, and therefore (\ref{hp:fdl-c'}) and (\ref{hp:fdl-c''}) are exactly the assumptions of Theorem~\ref{thmbibl:c''}.

If we want to explore the whole spectrum, we can consider for example the family of propagation speeds
\begin{equation}
c_{\alpha}(t):=2+\exp(-|\log t|^{1-\alpha})\sin\left(|\log t|^{2\alpha}\exp(|\log t|^{1-\alpha})\right).
\nonumber
\end{equation}

When $\alpha=0$ we obtain $2+t\sin(1/t)$, which is the model case for Theorem~\ref{thmbibl:c'}. When $\alpha=1$ we obtain $2+\sin(|\log t|^{2})$, which is the model case for Theorem~\ref{thmbibl:c''}. When $\alpha\in(0,1)$ the propagation speed $c_{\alpha}(t)$ satisfies (\ref{hp:fdl-c'}) and (\ref{hp:fdl-c''}) with (up to multiplicative constants) $\omega(t):=|\log t|^{\alpha}$ and $\psi(t):=|\log t|^{1-\alpha}$, and this choice is consistent with (\ref{hp:omega-psi}).
\end{em}
\end{rmk}

\begin{rmk}[Beyond Theorem~\ref{thmbibl:c''}]
\begin{em}

In principle, one could consider extending the theory by asking even weaker assumptions on $c'(t)$ and suitable stronger assumptions on $c''(t)$. Nevertheless, this would lead to a somewhat empty case, because  from the classical Glaeser inequality we know that the first derivative of a nonnegative function is bounded by the square root of the second derivative, and hence any assumption on $c''(t)$ stronger than (\ref{hp:c''}) yields automatically an assumption on $c'(t)$ stronger than (\ref{hp:c'-log}).

\end{em}
\end{rmk}

Concerning the derivative loss, it is probably worthwhile mentioning the following subtlety. 

\begin{rmk}[Progressive vs instantaneous derivative loss]
\begin{em}

As we recalled in the introduction, problem (\ref{eqn:main})--(\ref{eqn:main-data}) exhibits a finite derivative loss also when $c(t)$ is log-Lipschitz continuous, but of course there is no relation between log-Lipschitz continuity and bounds on derivatives.  Concerning the conclusion, in the log-Lipschitz case the derivative loss is \emph{progressive}, in the sense that initial data $(u_{0},u_{1})\in D(A^{\beta+1/2})\times D(A^{\beta})$ give rise to a solution with
\begin{equation}
(u(t),u'(t))\in D(A^{\beta-\sigma t+1/2})\times D(A^{\beta-\sigma t})
\qquad
\forall t\in[0,T_{0}],
\nonumber
\end{equation}
so that the derivative loss increases with time, but tends to~0 as $t\to 0^{+}$. 

On the contrary, in our case (as well as in Theorems~\ref{thmbibl:c'} and~\ref{thmbibl:c''}) the derivative loss is \emph{instantaneous}, in the sense that the solution loses immediately a finite number of derivatives, but then remains constantly in the same space. We point out that our choice that $T_{0}\in (0,1)$ is just aimed at preventing (\ref{hp:omega-psi}) from being too restrictive. If $c(t)$ is defined and smooth enough after $T_{0}$, then the solution will remain in the same space as long as $c(t)$ is defined.  

\end{em}
\end{rmk}


The proof of Theorem~\ref{thm:main-fdl} follows in a classical way from an estimate on the growth of solutions to the family of ordinary differential equations (\ref{eqn:u-lambda}). The exact statement we need is the following.

\begin{prop}[Key estimate on the growth of components]\label{prop:main-energy}

Let $T_{0}$, $\omega$, $\psi$ and $K_{0}$ be as in Theorem~\ref{thm:main-fdl}. Let $c:(0,T_{0}]\to\re$ be a function of class $C^{2}$ satisfying the strict hyperbolicity condition (\ref{hp:c-sh}) and the estimates (\ref{hp:fdl-c'}) and (\ref{hp:fdl-c''}).

Then there exist three positive real numbers $\lambda_{0}$, $M$ and $\delta$, depending only on $T_{0}$, $\mu_{1}$, $\mu_{2}$ and $K_{0}$, such that for every $\lambda\geq\lambda_{0}$ all solutions to problem (\ref{eqn:u-lambda}) satisfy
\begin{equation}
|u_{\lambda}'(t)|^{2}+\lambda^{2}|u_{\lambda}(t)|^{2}\leq
M\left(|u_{\lambda}'(0)|^{2}+\lambda^{2}|u_{\lambda}(0)|^{2}\right)\cdot\exp(\delta\log\lambda)
\qquad
\forall t\in[0,T_{0}].
\label{th:prop-main}
\end{equation}

\end{prop}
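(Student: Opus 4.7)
The plan is to implement the three-region strategy sketched in the introduction: partition $[0,T_{0}]$ into the pieces $[0,\al]$, $[\al,\bl]$, $[\bl,T_{0}]$ for suitable $\al<\bl$ depending on $\lambda$, and control a different energy on each piece, namely $\ek$, $\eh$, $\et$ in order. I will then chain the three growth estimates via the pairwise equivalence of these energies (up to constants depending on $\mu_{1},\mu_{2}$ and, at the $\eh$-$\et$ interface, on $|c'(\bl)|/(\lambda c(\bl))$), picking $\al,\bl$ so that each piece contributes only a factor $\exp(C\log\lambda)$.

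The first computational step is to differentiate the three energies along a solution of (\ref{eqn:u-lambda}). Direct calculations give
\begin{equation}
\ek'(t)=2\lambda^{2}(1-c(t))\,u_{\lambda}(t)u_{\lambda}'(t),
\qquad
\eh'(t)=\lambda^{2}c'(t)|u_{\lambda}(t)|^{2},
\nonumber
\end{equation}
and, after the standard Tarama cancellations (setting $v_{\lambda}:=u_{\lambda}'+(c'/(4c))u_{\lambda}$),
\begin{equation}
\et'(t)=\frac{1}{2c(t)^{3/2}}\left(c''(t)-\frac{5(c'(t))^{2}}{4c(t)}\right)u_{\lambda}(t)v_{\lambda}(t).
\nonumber
\end{equation}
Elementary AM-GM bounds, combined with $c\geq\mu_{1}$, turn these identities into the differential inequalities $\ek'\leq C_{1}\lambda\,\ek$, $\eh'\leq C_{2}(|c'(t)|/c(t))\,\eh$ and $\et'\leq(C_{3}/\lambda)(|c''(t)|+(c'(t))^{2}/c(t))\,\et$.

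Integrating on the respective intervals, and exploiting the monotonicity of $\omega$ and $\psi$ together with (\ref{hp:fdl-c'})--(\ref{hp:fdl-c''}), the three pieces yield
\begin{equation}
\ek(\al)\leq\ek(0)\,e^{C\lambda\al},
\qquad
\eh(\bl)\leq\eh(\al)\,\exp\!\left(C\,\omega(\al)\log\frac{\bl}{\al}\right),
\nonumber
\end{equation}
and (using $\int_{\bl}^{T_{0}}t^{-2}\,dt\leq 1/\bl$ and that $\omega^{2}e^{\psi}/t^{2}$ attains its maximum on $[\bl,T_{0}]$ at $t=\bl$)
\begin{equation}
\et(T_{0})\leq\et(\bl)\,\exp\!\left(\frac{C\,\omega(\bl)^{2}\exp(\psi(\bl))}{\lambda\,\bl}\right).
\nonumber
\end{equation}

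The step I expect to be the main obstacle is the choice of $\al$ and $\bl$ that forces all three exponents to be $O(\log\lambda)$ simultaneously. I would set $\al:=\sigma(\log\lambda)/\lambda$, so the Kovaleskyan factor is immediately $\leq\lambda^{C\sigma}$, and define $\bl$ implicitly by the equation $\omega(\bl)^{2}\exp(\psi(\bl))=\lambda\,\bl\log\lambda$, which pins down the Tarama factor at the desired $O(\log\lambda)$ level. Assumption (\ref{hp:omega-psi}) is exactly what is needed to turn these into a working choice: it both forces $\omega(\al)\lesssim\log\lambda$ and, combined with the defining relation for $\bl$, keeps $\log(\bl/\al)$ bounded so that the intermediate hyperbolic exponent $\omega(\al)\log(\bl/\al)$ is likewise $O(\log\lambda)$. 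The delicate verifications are that $\bl$ is well-defined and lies in $(\al,T_{0}]$ for $\lambda\geq\lambda_{0}$, and that $|c'(\bl)|/(\lambda c(\bl))$ (the extra factor appearing in the $\eh$-$\et$ comparison at $\bl$) stays bounded; both of these should follow from the defining equation for $\bl$ together with (\ref{hp:omega-psi}). Once this is in place, chaining the three pieces via the energy equivalences delivers (\ref{th:prop-main}) with $\delta$ depending only on $T_{0},\mu_{1},\mu_{2},K_{0}$.
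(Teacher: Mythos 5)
Your architecture is exactly the paper's: Kovaleskyan energy on $[0,\al]$, hyperbolic energy on $[\al,\bl]$, Tarama energy on $[\bl,T_{0}]$, and your energy identities are correct (including the Tarama derivative $\et'=\frac{1}{2c^{3/2}}\bigl(c''-\frac{5(c')^{2}}{4c}\bigr)u_{\lambda}v_{\lambda}$ and the bound $|u_{\lambda}v_{\lambda}|\leq C\lambda^{-1}\et$, which holds with $C$ depending only on $\mu_{1},\mu_{2}$). The gap sits precisely at the step you flag as the main obstacle: the implicit definition $\omega(\bl)^{2}\exp(\psi(\bl))=\lambda\bl\log\lambda$ does not deliver the properties you then assert. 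First, $\omega$ and $\psi$ are only nonincreasing, not continuous, so the equation need not have a solution (fixable, but not addressed). More seriously, $\bl>\al$ genuinely fails for admissible data: with $\omega(t)=\sqrt{|\log t|}$ and $\psi\equiv 1$, which satisfy (\ref{hp:omega-psi}), the equation forces $\bl\approx e/\lambda$, while $\al=\sigma\log\lambda/\lambda$, so $\bl\ll\al$ for every fixed $\sigma>0$; in the same example the only available bound on $[\bl,T_{0}]$ is $|c'(t)|\leq\omega(\bl)/\bl\sim\lambda\sqrt{\log\lambda}$, so the boundedness of $|c'(\bl)|/(\lambda c(\bl))$ that you need for the $\eh$--$\et$ comparison fails as well. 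Finally, even when $\bl>\al$, the claim that $\log(\bl/\al)$ stays bounded is wrong in general: one only gets $\log(\bl/\al)\leq\psi(\bl)+O(1)$, and $\psi$ may be unbounded (take $\omega=\psi=\sqrt{|\log t|}$). What actually saves the hyperbolic exponent is the product estimate $\omega(\al)\log(\bl/\al)\leq\omega(\al)\bigl(\psi(\al)+C\bigr)\leq C'K_{0}|\log\al|$ coming from (\ref{hp:omega-psi}), not boundedness of $\log(\bl/\al)$; this is a computation you do not carry out.

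The paper avoids all of this with the explicit choice $\al=\log\lambda/\lambda$ and $\bl=(\log\lambda/\lambda)\exp(\psi(1/\lambda))$. Then $\bl>\al$ is automatic, $\log(\bl/\al)=\psi(1/\lambda)$ exactly, so the hyperbolic exponent is $\omega(\al)\psi(1/\lambda)\leq\omega(1/\lambda)\psi(1/\lambda)\leq K_{0}\log\lambda$ straight from (\ref{hp:omega-psi}); moreover $|c'(t)|\leq\omega(\bl)/\bl\leq K_{0}\lambda$ on $[\bl,T_{0}]$, and $\lambda^{-1}\int_{\bl}^{T_{0}}|c''|\leq\lambda^{-1}\omega(\bl)^{2}e^{\psi(\bl)}/\bl\leq K_{0}^{2}\log\lambda$ using $\psi(\bl)\leq\psi(1/\lambda)$. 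The only degenerate case is $\bl\geq T_{0}$, which (\ref{hp:omega-psi}) forces to occur only when $\omega$ is bounded, i.e.\ in the setting of Theorem~\ref{thmbibl:c'}, where one falls back on the two-region argument. If you insist on your implicit $\bl$, you would need an analogous case analysis (drop the hyperbolic region when $\bl\leq\al$, and accept equivalence constants that degrade by factors of $\log\lambda$, which are still harmless for a finite derivative loss); as written, the proposal does not close this.
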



\subsection{Infinite derivative loss}

The second contribution of this paper are some counterexamples that show that the assumptions of our Theorem~\ref{thm:main-fdl}, as well as of the previous results (Theorems~\ref{thmbibl:c'} and~\ref{thmbibl:c''}) are optimal if the operator is unbounded. For the sake of simplicity, we restrict ourselves to operators that admit an unbounded sequence of eigenvalues (this is the case, for example, of the Laplacian with different boundary conditions in bounded domains).

\begin{defn}[Unbounded positive multiplication operators]\label{defn:upmo}
\begin{em}

Let $H$ be a Hilbert space, and let $A$ be a linear operator on $H$. We say that $A$ is an \emph{unbounded positive multiplication operator} if there exist a sequence $\{e_{n}\}\subseteq H$ of orthonormal vectors (not necessarily a Hilbert basis), and an unbounded sequence $\{\lambda_{n}\}$ of positive real numbers such that
\begin{equation}
Ae_{n}=\lambda_{n}e_{n}
\qquad
\forall n\geq 1.
\nonumber
\end{equation}

\end{em}
\end{defn}

Following the ideas introduced in the recent papers~\cite{gg:residual,gg:dgcs-critical}, we show that an infinite derivative loss is the common behavior of solutions whenever the propagation speed satisfies (\ref{hp:fdl-c'}) and (\ref{hp:fdl-c''}) with growth rates $\omega$ and $\psi$ that do not satisfy (\ref{hp:omega-psi}). More precisely, we show that the set of propagation speeds for which problem (\ref{eqn:main})--(\ref{eqn:main-data}) exhibits an infinite derivative loss is residual in the spaces that we introduce below.

\begin{defn}[Special classes of propagation speeds]\label{defn:ps}
\begin{em}

Let $T_{0}>0$ and $\mu_{2}>\mu_{1}>0$ be real numbers, and let $\omega:(0,T_{0}]\to(0,+\infty)$ and $\psi:(0,T_{0}]\to(0,+\infty)$ be two nonincreasing functions.

\begin{itemize}

\item  We call $\PSu(T_{0},\mu_{1},\mu_{2},\omega)$ the set of functions $c\in C^{0}([0,T_{0}])\cap C^{1}((0,T_{0}])$ that satisfy the strict hyperbolicity condition (\ref{hp:c-sh}) and the growth estimate (\ref{hp:fdl-c'}).

\item  We call $\PSd(T_{0},\mu_{1},\mu_{2},\omega,\psi)$ the set of functions $c\in C^{0}([0,T_{0}])\cap C^{2}((0,T_{0}])$ that satisfy the strict hyperbolicity condition (\ref{hp:c-sh}) and the two growth estimates (\ref{hp:fdl-c'}) and (\ref{hp:fdl-c''}).

\end{itemize}

\end{em}
\end{defn}

\begin{rmk}[Metric space structures]\label{rmk:cms}
\begin{em}

There are several ways to define a distance in the spaces of Definition~\ref{defn:ps}. Here we exploit a weighted uniform norm, with a weight that vanishes in the origin more than the reciprocal of the growth rate of derivatives.

\begin{itemize}

\item  The set $\PSu(T_{0},\mu_{1},\mu_{2},\omega)$ is a complete metric space with respect to the distance defined by
\begin{equation}
d_{\PSu}(c_{1},c_{2}):=\sup_{t\in(0,T_{0})}|c_{1}(t)-c_{2}(t)|+
\sup_{t\in(0,T_{0})}\left\{\frac{t^{2}}{\omega(t)}|c_{1}'(t)-c_{2}'(t)|\right\}.
\label{defn:d-ps1}
\end{equation}

Moreover, a sequence $c_{n}$ converges to some $c_{\infty}$ with respect to this metric if and only if $c_{n}\to c_{\infty}$ uniformly in $[0,T_{0}]$, and for every $\tau\in(0,T_{0})$ it turns out that $c_{n}'\to c_{\infty}'$ uniformly in $[\tau,T_{0}]$.

\item The set $\PSd(T_{0},\mu_{1},\mu_{2},\omega,\psi)$ is a complete metric space with respect to the distance defined by
\begin{equation}
d_{\PSd}(c_{1},c_{2}):=d_{\PSu}(c_{1},c_{2})+
\sup_{t\in(0,T_{0})}\left\{\frac{t^{3}\cdot\exp\left(-\psi(t)\right)}{\omega(t)^{2}}|c_{1}''(t)-c_{2}''(t)|\right\}.
\label{defn:d-ps2}
\end{equation}

Moreover, a sequence $c_{n}$ converges to some $c_{\infty}$ with respect to this metric if and only if $c_{n}\to c_{\infty}$ uniformly in $[0,T_{0}]$, and for every $\tau\in(0,T_{0})$ it turns out that $c_{n}'\to c_{\infty}'$ and $c_{n}''\to c_{\infty}''$ uniformly in $[\tau,T_{0}]$.

\end{itemize}

\end{em}
\end{rmk}


We are now ready to state our results concerning infinite derivative loss. The first one addresses the case with assumptions just on first derivatives.

\begin{thm}[Optimality of Theorem~\ref{thmbibl:c'}]\label{thm:counter-c'}

Let $A$ be an unbounded positive multiplication operator on a Hilbert space $H$.

Let $T_{0}>0$ and $\mu_{2}>\mu_{1}>0$ be real numbers, and let $\omega:(0,T_{0}]\to(0,+\infty)$ be a nonincreasing function such that
\begin{equation}
\lim_{t\to 0^{+}}\omega(t)=+\infty.
\label{hp:omega2infty}
\end{equation}

Let $\PSu(T_{0},\mu_{1},\mu_{2},\omega)$ denote the set of propagation speeds introduced in Definition~\ref{defn:ps}, with the structure of complete metric space induced by the distance (\ref{defn:d-ps1}).

Then the set of propagation speeds $c\in\PSu(T_{0},\mu_{1},\mu_{2},\omega)$ for which problem (\ref{eqn:main})--(\ref{eqn:main-data}) exhibits an infinite derivative loss is residual.

\end{thm}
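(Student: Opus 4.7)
The plan follows the Baire-category framework introduced in \cite{gg:residual,gg:dgcs-critical}. Since $A$ is an unbounded positive multiplication operator with eigenvalues $\{\lambda_j\}$, a solution of (\ref{eqn:main})--(\ref{eqn:main-data}) decomposes into independent components $u_{\lambda_j}$ solving (\ref{eqn:u-lambda}), so infinite derivative loss is equivalent to the existence of an initial datum in $D(A^\infty)\times D(A^\infty)$ for which $\sum_j\lambda_j^{-2\beta}E_{\lambda_j}(c;t)=+\infty$ for every $\beta>0$ and every $t\in(0,T_0]$, where $E_\lambda(c;t):=|u_\lambda'(t)|^2+\lambda^2|u_\lambda(t)|^2$. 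A key reduction is that, for any fixed $c\in\PSu$, the hyperbolic energy ratio $E_{\mathrm{Hyp}}(t_2)/E_{\mathrm{Hyp}}(t_1)$ is bounded independently of $\lambda$ for $0<t_1\le t_2\le T_0$, since $\int_{t_1}^{t_2}|c'|/c\,ds<+\infty$ by continuity and strict hyperbolicity; consequently it suffices to achieve super-polynomial componentwise growth on a countable dense family of times $\{\tau_m\}_{m\ge1}\subseteq(0,T_0]$.

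For each pair of integers $m,n\ge1$ I would define
\[
U_{m,n}:=\bigl\{c\in\PSu:\exists\, j\ge n\ \text{and a nontrivial solution of (\ref{eqn:u-lambda}) with}\ E_{\lambda_j}(c;\tau_m)>\lambda_j^{n}\,E_{\lambda_j}(c;0)\bigr\}.
\]
Openness of $U_{m,n}$ is essentially automatic: equation (\ref{eqn:u-lambda}) depends on $c$ only pointwise (not through derivatives), the distance $d_{\PSu}$ dominates the $L^\infty$ metric on $[0,T_0]$, and the solution space is two-dimensional, so classical continuous dependence of ODE solutions on coefficients yields continuity of $c\mapsto E_{\lambda_j}(c;\tau_m)$ on $\PSu$. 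Once density of each $U_{m,n}$ is established, Baire's theorem makes $R:=\bigcap_{m,n}U_{m,n}$ residual in $\PSu$.

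The main obstacle is density, and this is where the assumption $\omega(t)\to+\infty$ has to be really exploited. Given $c_0\in\PSu$ and $\rho>0$, my strategy is to perturb $c_0$ on one or finitely many short subintervals of $(0,\tau_m)$ by a DGCS-type oscillation of the form $\delta c(t)\sim -8\epl(t)\,c_0(t)\sin(2\gamma\lambda(t-a))$ with $\gamma\sim\sqrt{c_0(a)}$, constructed so that the explicit special solution $w_\lambda(t)=\sin(\gamma\lambda(t-a))\exp(\phi_\lambda(t))$ solves (\ref{eqn:u-lambda}) with the perturbed coefficient and grows roughly like $\exp\!\bigl(2\gamma\lambda\int\epl\bigr)$. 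The novelty with respect to \cite{dgcs} — forced here because the classical constant-$\ep$ scheme only produces polynomial growth under $|c'|\le\omega(t)/t$ — is to let $\epl(t)$ be singular at the left endpoint of each subinterval, of size comparable to $\omega(t)/(\lambda t)$, i.e.\ the maximal amplitude compatible with the pointwise bound $|c'|\le\omega(t)/t$. The cumulative exponent then grows like $\int\omega(s)/s\,ds$, which is unbounded near $t=0$ thanks to $\omega(t)\to+\infty$ and the non-integrability of $1/s$, so by pushing the subinterval close enough to the origin one can make the growth factor exceed $\lambda_j^n$ for a prescribed $j\ge n$. The delicate calibration will be to keep simultaneously: (i) the $L^\infty$-amplitude of $\delta c$ small (by choosing the base point $a=a_\lambda$ so that $\omega(a_\lambda)\ll\lambda a_\lambda$); (ii) the weighted derivative contribution $\sup t^2|\delta c'(t)|/\omega(t)$ small (which will be $O(b_\lambda-a_\lambda)\to 0$); (iii) the pointwise bound $|\tilde c'|\le\omega(t)/t$ preserved; (iv) strict hyperbolicity preserved, by keeping $\epl$ uniformly small; and (v) a $C^1$-gluing to $c_0$ at the endpoints. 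This is the technical heart of the construction and will occupy its own subsection on asymptotic and universal activators.

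Finally, for any $c\in R$ I would extract by a diagonal procedure indices $j(m)\to+\infty$ with $E_{\lambda_{j(m)}}(c;\tau_m)\ge\lambda_{j(m)}^{m}\,E_{\lambda_{j(m)}}(c;0)$, and choose an initial datum supported on $\{e_{j(m)}\}$ with coefficients decaying super-polynomially in $\lambda_{j(m)}$ (e.g.\ of order $\lambda_{j(m)}^{-m/2}$). This places $(u_0,u_1)$ in $D(A^\infty)\times D(A^\infty)$ but makes the $j(m)$-th component of the solution alone defeat any prescribed Sobolev bound at $\tau_m$; the uniform-in-$\lambda$ bounded-ratio property of the hyperbolic energy transports this blow-up to every $t\in(0,T_0]$, yielding an infinite derivative loss for $c$. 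In particular $R$ is contained in the set of propagation speeds producing infinite derivative loss, which is therefore residual.
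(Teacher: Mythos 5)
Your architecture is the same as the paper's: a Baire-category argument on $(\PSu,d_{\PSu})$ whose residual set is an intersection of open dense sets defined by quantitative energy growth (openness coming from continuous dependence of the two-dimensional solution space on $c$ in $L^{1}$), with density supplied by DGCS-type oscillations whose amplitude is time-dependent and singular at the origin, calibrated so that $|\delta c'(t)|\sim\omega(t)/t$ and the accumulated exponent $\sim\omega(\bl)\log(\bl/\al)$ beats every multiple of $\log\lambda$ because $\omega(\bl)\to+\infty$; this is exactly the paper's family of asymptotic activators, and your reduction to a countable dense set of times via the $\lambda$-independent bound on $\eh(t_{2})/\eh(t_{1})$ is a sound repackaging of the quantifier ``for all $t\in(0,T_{0}]$''.

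The genuine gap is in the density step, precisely in the part you defer. You propose to perturb an arbitrary $c_{0}\in\PSu$ directly on a short interval $[a,b]$ near the origin. This cannot work as stated, for two reasons. First, the explicit solution $w_{\lambda}(t)=\sin(\gamma\lambda(t-a))\exp(\phi_{\lambda}(t))$ solves the perturbed equation exactly only when the coefficient equals $\gamma^{2}$ plus the oscillatory terms on $[a,b]$; for a non-constant $c_{0}$ the coefficient you end up with differs from $c_{0}$ by $\gamma^{2}-c_{0}(t)$, whose derivative is $|c_{0}'(t)|$, possibly as large as $\omega(t)/t$, so that after adding the oscillation (itself of derivative size $\omega(t)/t$) the pointwise bound defining $\PSu$ is violated. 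Second, and independently, a $c_{0}$ that already saturates $|c_{0}'(t)|=\omega(t)/t$ on $[a,b]$ leaves no room for any further perturbation of that size. The paper resolves both issues with an intermediate density lemma: the propagation speeds that are constant near $t=0$ and satisfy the constraints with a strict margin $1-\ep_{0}$ (obtained by an affine rescaling followed by a cutoff that freezes $c_{0}$ at the value $c_{0}(\delta)$ on $[0,\delta]$) are dense in $\PSu$, and the activators are built only over that subclass; your plan needs this lemma. Two smaller points: with coefficients $a_{j(m)}=\lambda_{j(m)}^{-m/2}$ against a growth factor $\lambda_{j(m)}^{m}$, the terms of the series at time $t$ are only bounded below by a constant times $\lambda_{j(m)}^{-2\beta}\to 0$, so you must take a strictly smaller exponent (say $\lambda_{j(m)}^{-m/4}$, matching the paper's choice $\exp(-\phi(\lambda_{n})/4)$) for the series to diverge; and when transporting the blow-up from $\tau_{m}$ to a fixed $t$, the constant $\exp\left(-\int_{\tau_{m}}^{t}|c'(s)|/c(s)\,ds\right)$ degenerates as $\tau_{m}\to 0^{+}$, so for each fixed $t$ you should use only the (still infinitely many) indices $m$ with $\tau_{m}$ in a fixed compact subinterval of $(0,t]$.
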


The second result addresses propagation speeds with assumptions on both first and second order derivatives.

\begin{thm}[Optimality of Theorem~\ref{thmbibl:c''} and Theorem~\ref{thm:main-fdl}]\label{thm:counter-c''}

Let $A$ be an unbounded positive multiplication operator on a Hilbert space $H$.

Let $T_{0}>0$ and $\mu_{2}>\mu_{1}>0$ be real numbers, and let $\omega:(0,T_{0}]\to(0,+\infty)$ and $\psi:(0,T_{0}]\to(0,+\infty)$ be two nonincreasing functions such that
\begin{equation}
\lim_{t\to 0^{+}}\omega(t)=+\infty
\qquad\quad\mbox{and}\quad\qquad
\lim_{t\to 0^{+}}\frac{\omega(t)\psi(t)}{|\log t|}=+\infty.
\label{hp:counter-c''}
\end{equation}

Let $\PSd(T_{0},\mu_{1},\mu_{2},\omega,\psi)$ denote the set of propagation speeds introduced in Definition~\ref{defn:ps}, with the structure of complete metric space induced by the distance (\ref{defn:d-ps2}).

Then the set of propagation speeds $c\in\PSd(T_{0},\mu_{1},\mu_{2},\omega,\psi)$ for which problem (\ref{eqn:main})--(\ref{eqn:main-data}) exhibits an infinite derivative loss is residual.

\end{thm}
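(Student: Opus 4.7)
My plan is to prove Theorem~\ref{thm:counter-c''} via the Baire category theorem, following the abstract scheme alluded to in the introduction. For every triple $(n,k,\tau)$ with $n,k \in \n$ and $\tau \in (0,T_0] \cap \mathbb{Q}$, I introduce the set
\[
G_{n,k,\tau} := \bigl\{c \in \PSd(T_0,\mu_1,\mu_2,\omega,\psi) : \exists\, m \geq n,\ \exists \text{ nonzero solution } \vl \text{ of } (\ref{eqn:u-lambda}) \text{ with } \lambda=\lambda_m,\ E_m(\tau) \geq \lambda_m^{k} E_m(0) \bigr\},
\]
where $E_m(t) := |\vl'(t)|^2 + \lambda_m^2 |\vl(t)|^2$ (the $m$-th Kovaleskyan energy). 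A diagonal argument shows that any $c \in R := \bigcap_{n,k,\tau} G_{n,k,\tau}$ exhibits an infinite derivative loss in the sense of Definition~\ref{defn:regularity}: one extracts a subsequence $m_j \to \infty$ along which the normalized energy grows faster than any power of $\lambda_{m_j}$ at every rational time, then picks coefficients decaying super-polynomially and forms $u(t) = \sum_j a_j \vl_{m_j}(t)\, e_{m_j}$; its initial data belong to $D(A^\infty)$, but its energy at every rational $t>0$ (and, by continuity of each component, at every $t > 0$) escapes every Sobolev space.

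The task then reduces to showing that each $G_{n,k,\tau}$ is open and dense in $(\PSd, d_{\PSd})$. Openness is essentially automatic from continuous dependence of ODE solutions on the coefficient: Remark~\ref{rmk:cms} tells us that convergence in $d_{\PSd}$ implies at least uniform convergence of $c$ on $[0,T_0]$, so a standard Gronwall argument applied to (\ref{eqn:u-lambda}) for fixed $m$ makes $c \mapsto (\vl_{m}(\tau), \vl_{m}'(\tau))$ continuous, and the strict inequality $E_m(\tau) \geq \lambda_m^k E_m(0)$ is therefore stable under small $d_{\PSd}$-perturbations.

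For density, given $c_0 \in \PSd$, a tolerance $\eta > 0$, and targets $(n,k,\tau)$, I would construct a perturbation supported on a small interval $(0,a]$ with $a < \tau$, matched in $C^{2}$ to $c_0$ near $t=a$, and equal on $(0,a]$ to a \emph{singular asymptotic activator} of the form
\[
c(t) = \gamma^2 - 8\ep(t)\gamma^2 \sin(2\gamma \lambda_m t) - 16\ep(t)^2 \gamma^2 \sin^4(\gamma\lambda_m t),
\]
where $\gamma^2$ approximates $c_0(0)$, $m \geq n$ is a large index chosen at the end, and the amplitude $\ep(t)$ is a nonincreasing function blowing up as $t \to 0^{+}$. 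The companion solution to $(\ref{eqn:u-lambda})$ behaves, up to bounded corrections from the matching region, like $w_{\lambda_m}(t) = \sin(\gamma\lambda_m t)\exp\bigl(2\gamma \lambda_m \int_0^t \ep(s)\,ds - \ep(t)\sin(2\gamma\lambda_m t)\bigr)$, hence at time $\tau$ satisfies $E_m(\tau) \gtrsim \exp\bigl(4\gamma\lambda_m \int_0^a \ep(s)\,ds\bigr) E_m(0)$. Choosing $a$ small keeps $d_{\PSd}(c,c_0) < \eta$, because the weights $t^2/\omega(t)$ and $t^3\exp(-\psi(t))/\omega(t)^2$ in (\ref{defn:d-ps1})--(\ref{defn:d-ps2}) are precisely designed to absorb the admissible blow-up of $c'$ and $c''$ as $t\to 0^{+}$.

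The main obstacle, and the step in which hypothesis (\ref{hp:counter-c''}) is decisive, is the calibration of $\ep(t)$. A direct computation of the derivatives of the activator yields $|c'(t)| \sim \ep(t)\gamma\lambda_m$ and $|c''(t)| \sim \ep(t)\gamma^2\lambda_m^2$ to leading order, so membership of $c$ in $\PSd$ imposes $\ep(t)\lambda_m \lesssim \omega(t)/t$ and $\ep(t)\lambda_m^2 \lesssim \omega(t)^2 \exp(\psi(t))/t^2$, while the target growth demands $\lambda_m \int_0^a \ep(s)\,ds \gtrsim k\log\lambda_m$. These three constraints can be simultaneously satisfied exactly when both parts of (\ref{hp:counter-c''}) hold: the divergence $\omega(t) \to +\infty$ opens room to choose $\ep(t) = \alpha(t)/(\lambda_m t)$ with $\alpha(t) \nearrow +\infty$, and the divergence $\omega\psi/|\log t| \to +\infty$ is precisely what lets $\alpha$ grow fast enough that $\int_0^a \alpha(s)/s\,ds \gtrsim k\log\lambda_m$ (for $a=a(m)$ chosen to saturate the $c''$-constraint) without violating it. Making these inequalities quantitative, performing the $C^{2}$ matching at $t=a$ without destroying the growth, and checking that the resulting perturbation lies in the $\eta$-ball of $c_0$ for $d_{\PSd}$ are the core technical computations behind density.
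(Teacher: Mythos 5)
Your overall strategy is the same as the paper's: a Baire category argument (your open dense sets $G_{n,k,\tau}$ are the complement of the paper's closed sets $\mathcal{C}_k$ of non--``universal activators''), with density obtained from DGCS-type special solutions whose amplitude is made time-dependent and singular at the origin. You also correctly locate where hypothesis (\ref{hp:counter-c''}) must enter. However, the sketch has two genuine gaps.

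First, in the reduction from the residual set to infinite derivative loss, controlling the energies only at rational times $\tau$ and then invoking ``continuity of each component'' is not enough: divergence of the series $\sum_j a_j^2 E_{m_j}(\tau)\lambda_{m_j}^{-2\beta}$ at rational $\tau$ does not pass to nearby irrational $t$ by componentwise continuity, since the solutions oscillate at frequency $\lambda_{m_j}$ and their moduli of continuity degenerate. What saves the argument is the uniform two-sided Gronwall bound on the hyperbolic energy, $E_m(t)\geq E_m(\tau)\exp\bigl(-\int_{\delta}^{T_0}|c'|/c\,ds\bigr)$ for $\delta\leq\tau\leq t$, which is available because $\int_\delta^{T_0}\omega(s)/s\,ds<\infty$; this must be invoked explicitly (the paper sidesteps the issue by building the lower bound (\ref{eqn:asympt-activ}) uniformly on $[\delta,T_0]$ for every $\delta$). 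Also, your $G_{n,k,\tau}$ should be defined with a strict inequality to be open.

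Second, and more seriously, the activator as you describe it is inadmissible. You support the oscillation on all of $(0,a]$ with $\ep(t)=\alpha(t)/(\lambda_m t)$ and $\alpha(t)\nearrow+\infty$ as $t\to 0^+$. Near $t=0$ one has $\ep(t)\sin(2\gamma\lambda_m t)\approx 2\gamma\alpha(t)\to+\infty$, so your $c(t)$ leaves $[\mu_1,\mu_2]$ and is not even continuous at $t=0$; correspondingly your key growth condition $\int_0^a\alpha(s)/s\,ds\gtrsim k\log\lambda_m$ is vacuous because that integral is $+\infty$. The construction requires an \emph{inner} cutoff $a_\lambda>0$ (in the paper, $\ep_\lambda\equiv 0$ on $[0,a_\lambda]$ with $\oml/(\lambda a_\lambda)\to 0$), and the whole difficulty of the theorem is the competition between the three scales: the amplitude bound $\alpha\lesssim\omega$ from the $c'$-constraint, the outer cutoff $\lambda b_\lambda\lesssim\omega(b_\lambda)e^{\psi(b_\lambda)}$ from the $c''$-constraint, and the growth $\alpha(a_\lambda)\log(b_\lambda/a_\lambda)\gg\log\lambda$, which forces $\log(b_\lambda/a_\lambda)\sim\psi$ and is resolvable precisely when $\omega\psi/|\log t|\to+\infty$ (this is the paper's choice $a_\lambda\sim\lambda^{-1}\log\lambda\,e^{\psi_\lambda}$, $b_\lambda\sim\lambda^{-1}\log\lambda\,e^{2\psi_\lambda}$). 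Your sketch never confronts this balance. Two further details you would need: the displayed $c(t)$ must carry an extra correction term proportional to $\ep'(t)\sin^2(\gamma\lambda_m t)/\lambda_m^2$ for the claimed solution formula to be exact once $\ep$ depends on $t$; and before perturbing a generic $c_0$ one should first replace it by a nearby speed satisfying the constraints with strict inequality (and constant near $t=0$), since otherwise the $C^2$ matching at $t=a$ and the added oscillation can violate the bounds that $c_0$ already saturates.
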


\begin{rmk}
\begin{em}

We observe that in Theorem~\ref{thm:counter-c''} we can choose $\omega(t)=|\log t|$ and $\psi(t)$ any nonincreasing function such that $\psi(t)\to +\infty$ as $t\to 0^{+}$. With these choices we obtain an infinite derivative loss for a class of propagations speeds whose first derivatives grow exactly as in (\ref{hp:c'-log}), and whose second derivatives grow just a little bit more than in (\ref{hp:c''}). This improves Theorem~\ref{thmbibl:counter} in showing the optimality of Theorem~\ref{thmbibl:c''}.

\end{em}
\end{rmk}


\setcounter{equation}{0}
\section{Finite derivative loss}\label{sec:well-posed}

In this section we prove Proposition~\ref{prop:main-energy}, from which Theorem~\ref{thm:main-fdl} follows as a standard corollary. Before entering into details, we observe that there is no loss of generality in writing equation (\ref{eqn:u-lambda}) in the form
\begin{equation}
u_{\lambda}''(t)+\lambda^{2}c(t)^{2}u_{\lambda}(t)=0,
\label{eqn:u-lambda-2}
\end{equation}
namely with $c(t)^{2}$ instead of $c(t)$. Indeed, a propagation speed $c(t)$ satisfies (\ref{hp:c-sh}) for some constants $\mu_{1}$ and $\mu_{2}$, and inequalities (\ref{hp:fdl-c'}) and (\ref{hp:fdl-c''}) for some functions $\omega$ and $\psi$ that fulfill (\ref{hp:omega-psi}), if and only if $c(t)^{2}$ satisfies (\ref{hp:c-sh}) with different choices of $\mu_{1}$ and $\mu_{2}$, and inequalities (\ref{hp:fdl-c'}) and (\ref{hp:fdl-c''}) for different choices of the functions $\omega_{1}$ and $\psi_{1}$ that however are equivalent to $\omega$ and $\psi$ up to a multiplicative constant, so that they fulfill again (\ref{hp:omega-psi}) with a possibly different choice of $K_{0}$. 

Therefore, in the sequel we consider the family of ordinary differential equations (\ref{eqn:u-lambda-2}) with $c(t)$ that satisfies (\ref{hp:c-sh}), (\ref{hp:fdl-c'}) and (\ref{hp:fdl-c''}). Writing (\ref{eqn:u-lambda}) in the form (\ref{eqn:u-lambda-2}) allows to express Tarama energy (\ref{defn:et-sqrt}) without square roots, and the absence of square roots simplifies the computation of derivatives. 

\paragraph{\textmd{\textit{Definition and properties of splitting times}}}

For every $\lambda>1$ we set
\begin{equation}
\al:=\frac{\log\lambda}{\lambda},
\qquad\qquad
\bl:=\frac{\log\lambda}{\lambda}\exp\left(\psi\left(1/\lambda\right)\right).
\nonumber
\end{equation}

We can assume, without loss of generality, that there exists $\lambda_{0}>1$ such that
\begin{equation}
\frac{1}{\lambda}<\al<\bl<T_{0}
\qquad
\forall\lambda\geq\lambda_{0}.
\label{hp:al<bl}
\end{equation}

Indeed, the first two inequalities are true as soon as $\lambda>e$, while the third one can be false on a sequence $\lambda_{k}\to +\infty$ only if $\psi(1/\lambda_{k})/\log\lambda_{k}$ is bounded from below by a positive constant. This condition is compatible with (\ref{hp:omega-psi}) only when $\omega(1/\lambda_{k})$ is bounded from above, and hence by monotonicity $\omega(t)$ is bounded from above, namely when $c'(t)$ satisfies the assumptions of Theorem~\ref{thmbibl:c'}. As observed in the introduction, in that case the proof requires only to consider the intervals $[0,\al]$ and $[\al,T_{0}]$, and no assumptions on second derivatives is needed (and indeed we exploit the assumption on $|c''(t)|$ only in the interval $[\bl,T_{0}]$). 

Therefore, in the sequel we assume that (\ref{hp:al<bl}) holds true, and we prove (\ref{th:prop-main}) in the equivalent form 
\begin{equation}
\ek(t)\leq M\ek(0)\exp(\delta\log\lambda)
\qquad
\forall t\in[0,T_{0}],
\label{th=ek}
\end{equation}
where $\ek(t)$ is the Kovaleskyan energy defined in (\ref{defn:ek}). We obtain this inequality by estimating the Kovaleskyan energy in $[0,\al]$, the hyperbolic energy in $[\al,\bl]$, and the Tarama energy in $[\bl,T_{0}]$. 

\paragraph{\textmd{\textit{Energy estimate -- Kovaleskyan region}}}

We show that
\begin{equation}
\ek(t)\leq
\ek(0)\cdot\exp\left((1+\mu_{2}^{2})\log\lambda\right)
\qquad
\forall t\in[0,\al],
\label{th:ek}
\end{equation}
which in particular implies that the inequality in (\ref{th=ek}) holds true for every $t\in[0,\al]$.

Due to (\ref{eqn:u-lambda-2}), the time-derivative of the Kovaleskyan energy is given by
\begin{equation}
\ek'(t)=2\lambda^{2}(1-c(t)^{2})u_{\lambda}(t)u_{\lambda}'(t)
\qquad
\forall t\in(0,T_{0}).
\nonumber
\end{equation}

Since $2\lambda u_{\lambda}(t)u_{\lambda}'(t)\leq\ek(t)$, from (\ref{hp:c-sh}) we deduce that
\begin{equation}
\ek'(t)\leq\lambda(1+\mu_{2}^{2})\ek(t)
\qquad
\forall t\in[0,T_{0}],
\nonumber
\end{equation}
and therefore
\begin{equation}
\ek(t)\leq\ek(0)\exp\left(\lambda(1+\mu_{2}^{2})t\right)
\qquad
\forall t\in(0,T_{0}).
\label{est:ek}
\end{equation}

When $t\leq \al$ it turns out that
\begin{equation}
\lambda(1+\mu_{2}^{2})t\leq
\lambda(1+\mu_{2}^{2})\al=
(1+\mu_{2}^{2})\log\lambda,
\nonumber
\end{equation}
and therefore (\ref{est:ek}) implies (\ref{th:ek}).

\paragraph{\textmd{\textit{Energy estimate -- Hyperbolic region}}}

We show that there exist two positive constants $M_{1}$ and $\delta_{1}$ such that
\begin{equation}
\ek(t)\leq
M_{1}\ek(0)\exp\left(\delta_{1}\log\lambda\right)
\qquad
\forall t\in[\al,\bl],
\label{th:eh}
\end{equation}
which in particular implies that the inequality in (\ref{th=ek}) holds true  for every $t\in[\al,\bl]$.

To this end, we consider the hyperbolic energy $\eh(t)$, which in the case of equation (\ref{eqn:u-lambda-2}) is given by
\begin{equation}
\eh(t):=|u_{\lambda}'(t)|^{2}+\lambda^{2}c(t)^{2}|u_{\lambda}(t)|^{2},
\label{defn:eh-2}
\end{equation}
and we observe that it is equivalent to the Kovaleskyan energy in the sense that
\begin{equation}
\min\{1,\mu_{1}^{2}\}\ek(t)\leq
\eh(t)\leq
\max\{1,\mu_{2}^{2}\}\ek(t)
\qquad
\forall t\in [0,T_{0}].
\label{est:ek-eh}
\end{equation}

Computing the time-derivative of (\ref{defn:eh-2}) we find that
\begin{equation}
\eh'(t)=
2c'(t)c(t)\lambda^{2}u_{\lambda}(t)^{2}\leq
2\frac{|c'(t)|}{c(t)}\eh(t)
\qquad
\forall t\in(0,T_{0}),
\nonumber
\end{equation}
and therefore
\begin{equation}
\eh(t)\leq
\eh(\al)
\exp\left(2\int_{\al}^{\bl}\frac{|c'(s)|}{c(s)}\,ds\right)
\qquad
\forall t\in[\al,\bl].
\label{est:eh-main}
\end{equation}

In order to estimate the integral, we exploit the strict hyperbolicity (\ref{hp:c-sh}), our assumption (\ref{hp:fdl-c'}) and the monotonicity of $\omega$. We obtain that
\begin{equation}
\int_{\al}^{\bl}\frac{|c'(s)|}{c(s)}\,ds\leq
\frac{1}{\mu_{1}}\int_{\al}^{\bl}\frac{\omega(s)}{s}\,ds\leq
\frac{\omega(\al)}{\mu_{1}}\int_{\al}^{\bl}\frac{1}{s}\,ds=
\frac{\omega(\al)}{\mu_{1}}\log\left(\frac{\bl}{\al}\right).
\nonumber
\end{equation}

Since $\al\geq 1/\lambda$, recalling the definition of $\bl$ and assumption (\ref{hp:omega-psi}), we find that 
\begin{equation}
\frac{\omega(\al)}{\mu_{1}}\log\left(\frac{\bl}{\al}\right)\leq
\frac{1}{\mu_{1}}\omega(1/\lambda)\psi(1/\lambda)\leq
\frac{K_{0}}{\mu_{1}}|\log(1/\lambda)|=
\frac{K_{0}}{\mu_{1}}\log\lambda.
\nonumber
\end{equation}

Plugging these estimates into (\ref{est:eh-main}) we deduce that
\begin{equation}
\eh(t)\leq\eh(\al)\exp\left(\frac{2K_{0}}{\mu_{1}}\log\lambda\right)
\qquad
\forall t\in[\al,\bl].
\nonumber
\end{equation}

Finally, thanks to the equivalence (\ref{est:ek-eh}), from (\ref{est:ek}) with $t=\al$ we conclude that
\begin{eqnarray*}
\ek(t) & \leq & \frac{1}{\min\{1,\mu_{1}^{2}\}}\eh(t) \\
& \leq & \frac{1}{\min\{1,\mu_{1}^{2}\}}\eh(\al)\exp\left(\frac{2K_{0}}{\mu_{1}}\log\lambda\right) \\[1ex]
& \leq & \frac{\max\{1,\mu_{2}^{2}\}}{\min\{1,\mu_{1}^{2}\}}\ek(\al)
\exp\left(\frac{2K_{0}}{\mu_{1}}\log\lambda\right)  \\[1ex]
& \leq & \frac{\max\{1,\mu_{2}^{2}\}}{\min\{1,\mu_{1}^{2}\}}\ek(0)
\exp\left(\left\{1+\mu_{2}^{2}+\frac{2K_{0}}{\mu_{1}}\right\}\log\lambda\right)
\end{eqnarray*}
for every $t\in[\al,\bl]$, which proves (\ref{th:eh}).

\paragraph{\textmd{\textit{Energy estimate -- Tarama region}}}

We show that the inequality in (\ref{th=ek}) holds true  for every $t\in[\bl,T_{0}]$. To this end, we consider the Tarama energy, which in the case of equation (\ref{eqn:u-lambda-2}) is given by
\begin{equation}
\et(t):=\frac{|u_{\lambda}'(t)|^{2}}{c(t)}+\lambda^{2}c(t)|u_{\lambda}(t)|^{2}+
\frac{c'(t)^{2}|u_{\lambda}(t)|^{2}}{4c^{3}(t)}+\frac{c'(t)u_{\lambda}(t)u_{\lambda}'(t)}{c^{2}(t)},
\label{defn:et}
\end{equation}

We claim that in the interval $[\bl,T_{0}]$ the Tarama energy is equivalent to the Kovaleskyan one, in the sense that
\begin{equation}
\ep_{0}\ek(t)\leq\et(t)\leq M_{2}\ek(t)
\qquad
\forall t\in[\bl,T_{0}]
\label{est:et-ek}
\end{equation}
for suitable positive real numbers $\ep_{0}$ and $M_{2}$, and that it satisfies the estimate
\begin{equation}
\et(t)\leq\et(\bl)\exp(\delta_{2}\log\lambda)
\qquad
\forall t\in[\bl,T_{0}]
\label{est:et}
\end{equation}
for another positive real number $\delta_{2}$. If we prove these claims, then from (\ref{est:et-ek}), (\ref{est:et}) and (\ref{th:eh}) with $t=\bl$ we conclude that
\begin{eqnarray*}
\ek(t) & \leq &  \frac{1}{\ep_{0}}\et(t) \\[0.5ex]
& \leq & \frac{1}{\ep_{0}}\et(\bl)\exp(\delta_{2}\log\lambda) \\[0.5ex]
& \leq & \frac{M_{2}}{\ep_{0}}\ek(\bl)\exp(\delta_{2}\log\lambda) \\[0.5ex]
& \leq & 
\frac{M_{1}M_{2}}{\ep_{0}}\ek(0)\exp\left\{\left(\delta_{1}+\delta_{2}\right)\log\lambda\right\}
\end{eqnarray*}
for every $t\in[\bl,T_{0}]$, which completes the proof. Therefore, in the sequel we limit ourselves to proving (\ref{est:et-ek}) and (\ref{est:et}).

\subparagraph{\textmd{\textit{Equivalence of energies}}}

To begin with, we show the key estimate
\begin{equation}
|c'(t)|\leq K_{0}\lambda
\qquad
\forall t\in[\bl,T_{0}].
\label{est:c'/c2}
\end{equation}

To this end, from (\ref{hp:fdl-c'}) and the monotonicity of $\omega$ we obtain that
\begin{equation}
|c'(t)|\leq
\frac{\omega(t)}{t}\leq
\frac{\omega(\bl)}{\bl}
\qquad
\forall t\in[\bl,T_{0}].
\nonumber
\end{equation}

Since $\bl\geq 1/\lambda$, exploiting again the monotonicity of $\omega$ and assumption (\ref{hp:omega-psi}), we deduce that
\begin{equation}
\omega(\bl)\leq\omega(1/\lambda)\leq K_{0}|\log(1/\lambda)|= K_{0}\log\lambda,
\label{est:omega-bl}
\end{equation}
and therefore from the definition of $\bl$ we conclude that
\begin{equation}
|c'(t)|\leq
\frac{\omega(\bl)}{\bl}\leq
K_{0}\log\lambda\cdot\frac{\lambda}{\log\lambda}\exp(-\psi(1/\lambda))\leq
K_{0}\lambda,
\nonumber
\end{equation}
which proves (\ref{est:c'/c2}).

In order to prove the estimate from below in (\ref{est:et-ek}), we actually show that there exists $\ep_{1}>0$ such that
\begin{equation}
\et(t)\geq\frac{\ep_{1}}{c(t)}\eh(t)
\qquad
\forall t\in[\bl,T_{0}],
\label{est:et-ceh}
\end{equation}
which in turn implies the required estimate because
\begin{equation}
\frac{\ep_{1}}{c(t)}\eh(t)\geq
\frac{\ep_{1}}{\mu_{2}}\eh(t)\geq
\frac{\ep_{1}}{\mu_{2}}\min\{1,\mu_{1}^{2}\}\ek(t).
\nonumber
\end{equation}

In order to prove (\ref{est:et-ceh}), we choose a real number $\ep_{1}\in(0,1)$ such that
\begin{equation}
\ep_{1}K_{0}^{2}\leq 4(1-\ep_{1})^{2}\mu_{1}^{4}
\nonumber
\end{equation}
(this inequality is true when $\ep_{1}$ is small enough). At this point (\ref{est:et-ceh}) is equivalent to
\begin{equation}
(1-\ep_{1})\frac{|u_{\lambda}'(t)|^{2}}{c(t)}+
\left(\frac{c'(t)^{2}}{4c(t)^{3}}+(1-\ep_{1})c(t)\lambda^{2}\right)|u_{\lambda}(t)|^{2}+
\frac{c'(t)}{c(t)^{2}}u_{\lambda}(t)u_{\lambda}'(t)\geq 0.
\nonumber
\end{equation}

The left-hand side is a quadratic form in the variables $u_{\lambda}(t)$ and $u_{\lambda}'(t)$. The coefficients of the quadratic terms are positive because $\ep_{1}<1$. Therefore, the form is nonnegative if
\begin{equation}
\frac{(1-\ep_{1})}{c(t)}\cdot\left(\frac{c'(t)^{2}}{4c(t)^{3}}+(1-\ep_{1})c(t)\lambda^{2}\right)\geq
\frac{1}{4}\frac{c'(t)^{2}}{c(t)^{4}},
\nonumber
\end{equation}
which in turn is equivalent to
\begin{equation}
\ep_{1}c'(t)^{2}\leq 4(1-\ep_{1})^{2}c(t)^{4}\lambda^{2},
\nonumber
\end{equation}
and this inequality follows from (\ref{est:c'/c2}) because
\begin{equation}
\ep_{1}c'(t)^{2}\leq 
\ep_{1}K_{0}^{2}\lambda^{2}\leq
4(1-\ep_{1})^{2}\mu_{1}^{4}\lambda^{2}\leq
4(1-\ep_{1})^{2}c(t)^{4}\lambda^{2}.
\nonumber
\end{equation}

It remains to prove the estimate from above in (\ref{est:et-ek}). Due to the strict hyperbolicity condition (\ref{hp:c-sh}), the first two terms in (\ref{defn:et}) can be estimated as
\begin{equation}
\frac{|u_{\lambda}'(t)|^{2}}{c(t)}\leq\frac{1}{\mu_{1}}\ek(t)
\qquad\quad\mbox{and}\qquad\quad
\lambda^{2}c(t)|u_{\lambda}(t)|^{2}\leq\mu_{2}\ek(t).
\nonumber
\end{equation}

In order to estimate the third and fourth term, we exploit again (\ref{est:c'/c2}) and we deduce that
\begin{equation}
\frac{c'(t)^{2}}{4c(t)^{3}}|u_{\lambda}(t)|^{2}\leq
\frac{K_{0}^{2}}{4\mu_{1}^{3}}\,\lambda^{2}|u_{\lambda}(t)|^{2}\leq
\frac{K_{0}^{2}}{4\mu_{1}^{3}}\,\ek(t),
\nonumber
\end{equation}
and
\begin{equation}
\frac{c'(t)}{c(t)^{2}}u_{\lambda}(t)u_{\lambda}'(t)\leq
\frac{K_{0}}{\mu_{1}^{2}}\,\lambda |u_{\lambda}(t)|\cdot|u_{\lambda}'(t)|\leq
\frac{K_{0}}{2\mu_{1}^{2}}\,\ek(t).
\nonumber
\end{equation}

\subparagraph{\textmd{\textit{Differential inequality for the Tarama energy}}}

In order to prove estimate (\ref{est:et}), we compute the time-derivative of (\ref{defn:et}), and we obtain that
\begin{equation}
\et'(t)=\left(\frac{|u_{\lambda}(t)|^{2}}{2}\frac{c'(t)}{c(t)^{3}}+\frac{u_{\lambda}(t)u_{\lambda}'(t)}{c(t)^{2}}\right)\left(c''(t)-\frac{3}{2}\frac{c'(t)^{2}}{c(t)}\right)
\qquad
\forall t\in(0,T_{0}).
\label{eqn:et'}
\end{equation}

When $t\geq\bl$, we can estimate the first two terms by exploiting once again (\ref{est:c'/c2}), and we obtain that
\begin{equation}
\frac{|u_{\lambda}(t)|^{2}}{2}\frac{|c'(t)|}{c(t)^{3}}\leq
\frac{K_{0}}{2\mu_{1}^{3}}\,\lambda |u_{\lambda}(t)|^{2}\leq
\frac{K_{0}}{2\mu_{1}^{3}}\cdot\frac{1}{\lambda}\ek(t)\leq
\frac{K_{0}}{2\mu_{1}^{3}\ep_{0}}\cdot\frac{1}{\lambda}\et(t),
\nonumber
\end{equation}
and
\begin{equation}
\frac{|u_{\lambda}(t)u_{\lambda}'(t)|}{c(t)^{2}}=
\frac{1}{\lambda c(t)^{2}}\cdot\lambda|u_{\lambda}(t)u_{\lambda}'(t)|\leq
\frac{1}{2\mu_{1}^{2}}\cdot\frac{1}{\lambda}\ek(t)\leq
\frac{1}{2\mu_{1}^{2}\ep_{0}}\cdot\frac{1}{\lambda}\et(t).
\nonumber
\end{equation}

Plugging these estimates into (\ref{eqn:et'}), we deduce that there exists a positive constant $M_{3}$ such that
\begin{equation}
\et'(t)\leq\frac{M_{3}}{\lambda}\et(t)\left(|c''(t)|+|c'(t)|^{2}\right)
\qquad
\forall t\in(\bl,T_{0}),
\nonumber
\end{equation}
and therefore
\begin{equation}
\et(t)\leq
\et(\bl)\exp\left(\frac{M_{3}}{\lambda}\int_{\bl}^{T_{0}}\left\{|c''(s)|+|c'(s)|^{2}\right\}\,ds\right)
\qquad
\forall t\in[\bl,T_{0}].
\label{est:et-int}
\end{equation}

It remains to estimate the last integral, and this is the point where the assumption on the second derivative of the propagation speed comes into play. To begin with, comparing (\ref{hp:fdl-c'}) and (\ref{hp:fdl-c''}) we observe that it is enough to bound the integral of $|c''(s)|$. Now we exploit assumption (\ref{hp:fdl-c''}), the monotonicity of $\omega$ and $\psi$, and inequality (\ref{est:omega-bl}), and we deduce that
\begin{eqnarray*}
\int_{\bl}^{T_{0}}|c''(s)|\,ds 
& \leq & \int_{\bl}^{T_{0}}\frac{\omega(s)^{2}}{s^{2}}\exp(\psi(s))\,ds  \\[0.5ex]
& \leq & \omega(\bl)^{2}\exp(\psi(\bl))\int_{\bl}^{T_{0}}\frac{1}{s^{2}}\,ds  \\[1ex]
& \leq & K_{0}^{2}(\log\lambda)^{2}\exp(\psi(\bl))\frac{1}{\bl}  \\[1ex]
& = & K_{0}^{2}(\log\lambda)^{2}\exp(\psi(\bl))\cdot\frac{\lambda}{\log\lambda}\exp\left(-\psi(1/\lambda)\strut\right) \\[0.5ex]
& \leq & K_{0}^{2}\,\lambda\log\lambda,
\end{eqnarray*}
where in the last inequality we exploited that $\bl\geq 1/\lambda$, and hence $\psi(\bl)\leq\psi(1/\lambda)$. 

Plugging this estimate into (\ref{est:et-int}) we obtain (\ref{est:et}) with $\delta_{2}:=M_{3}K_{0}^{2}$.


\setcounter{equation}{0}
\section{Counterexamples}\label{sec:counterexamples}

This section is devoted to the construction of the counterexamples that are needed for a proof of Theorem~\ref{thm:counter-c'} and Theorem~\ref{thm:counter-c''}. The strategy is the following.
\begin{itemize}

\item In subsection~\ref{sec:activators} we introduce special classes of propagation speeds, which we call universal and asymptotic activators. We show that the existence of families of asymptotic activators converging to elements of a dense set implies the existence of a residual set of universal activators, and that problem (\ref{eqn:main})--(\ref{eqn:main-data}) exhibits an infinite derivative loss whenever the propagation speed $c(t)$ is a universal activator. This reduces the construction of counterexamples to the existence of such families of asymptotic activators.

\item  In subsection~\ref{sec:icf} we identify a suitable dense set of propagation speeds, consisting of what we call initially constant functions.

\item  In subsection~\ref{sec:as-act-icf} we present a general parametric construction that, under suitable assumptions on the parameters, produces the required family of asymptotic activators.

\item  In subsection~\ref{sec:parameters} we show that, under the assumptions of Theorem~\ref{thm:counter-c'} and Theorem~\ref{thm:counter-c''}, we can fulfill the required assumptions on the parameters.

\item  Finally, in subsection~\ref{sec:iteration} we show that our ingredients can be combined in a different way in order to produce an ``explicit'' counterexample through an iteration procedure in the spirit of~\cite{dgcs}.

\end{itemize}

In the sequel we consider solutions to the family of ordinary differential equations 
\begin{equation}
u_{\lambda}''(t)+\lambda^{2}\cl(t)u_{\lambda}(t)=0,
\label{eqn:uc-lambda}
\end{equation}
with initial data
\begin{equation}
u_{\lambda}(0)=0,
\qquad
u_{\lambda}'(0)=1.
\label{data:ode-lambda}
\end{equation}

We observe that (\ref{eqn:uc-lambda}) is of the form (\ref{eqn:u-lambda}), but here the propagation speed depends on the parameter $\lambda$. When the propagation speed is fixed, but we deal with a sequence $\{\lambda_{n}\}$ of parameters, we avoid double indices by writing equation (\ref{eqn:u-lambda}) in the form
\begin{equation}
u_{n}''(t)+\lambda_{n}^{2}c(t)u_{n}(t)=0,
\label{eqn:ode-lambda-n}
\end{equation}
and again we consider initial data of the form
\begin{equation}
u_{n}(0)=0,
\qquad
u_{n}'(0)=1.
\label{data:ode-lambda-n}
\end{equation}

We also fix, once for all, a cutoff function $\theta:\re\to\re$ of class $C^{\infty}$ satisfying
\begin{equation}
\fbox{$
\begin{array}{@{\,}c@{\qquad}c}
\theta(\sigma)=0 & \forall\sigma\leq 0 \\[0.5ex]
\theta(\sigma)=1 & \forall\sigma\geq 1 \\[0.5ex]
0\leq\theta(\sigma)\leq 1 & \forall\sigma\in[0,1]
\end{array}
$}
\label{defn:cutoff}
\end{equation}


\subsection{Asymptotic and universal activators}\label{sec:activators}

Let us introduce our notion of activators (compare with~\cite{gg:dgcs-critical}).

\begin{defn}[Universal activators of a sequence]\label{defn:un-act}
\begin{em}

Let $T_{0}$ be a positive real number, let $\phi:(0,+\infty)\to(0,+\infty)$ be a function, and let $\{\lambda_{n}\}$ be a sequence of positive real numbers such that $\lambda_{n}\to +\infty$.

A \emph{universal activator} of the sequence $\{\lambda_{n}\}$ with rate $\phi$ is a propagation speed $c\in L^{1}((0,T_{0}))$ such that the corresponding sequence $\{u_{n}(t)\}$ of solutions to (\ref{eqn:ode-lambda-n})--(\ref{data:ode-lambda-n}) satisfies
\begin{equation}
\limsup_{n\to +\infty}\left(|u_{n}'(t)|^{2}+\lambda_{n}^{2}|u_{n}(t)|^{2}\right)
\exp\left(-\phi(\lambda_{n})\strut\right)\geq 1
\qquad
\forall t\in(0,T_{0}].
\label{th:defn-un-act}
\end{equation}

\end{em}
\end{defn}

\begin{defn}[Asymptotic activators]\label{defn:as-act}
\begin{em}

Let $T_{0}$ be a positive real number, and let $\phi:(0,+\infty)\to(0,+\infty)$ be a function. 

A \emph{family of asymptotic activators} with rate $\phi$ is a family of propagation speeds $\{c_{\lambda}(t)\}\subseteq L^{1}((0,T_{0}))$ with the property that, for every $\delta\in(0,T_{0})$, there exist two positive constants $M_{\delta}$ and $\lambda_{\delta}$ such that the corresponding family $\{u_{\lambda}(t)\}$ of solutions to (\ref{eqn:uc-lambda})--(\ref{data:ode-lambda}) satisfies
\begin{equation}
|u_{\lambda}'(t)|^{2}+\lambda^{2}|u_{\lambda}(t)|^{2}\geq
M_{\delta}\exp\left(2\phi(\lambda)\right)
\qquad
\forall t\in[\delta,T_{0}],\quad\forall\lambda\geq\lambda_{\delta}.
\label{eqn:asympt-activ}
\end{equation}

\end{em}
\end{defn}

We point out that (\ref{th:defn-un-act}) is a qualitative statement, apparently weaker than the quantitative version (\ref{eqn:asympt-activ}). On the contrary, (\ref{th:defn-un-act}) is stronger because it concerns the family of equations (\ref{eqn:ode-lambda-n}), where the propagation speed is the same for every $n$, while (\ref{eqn:asympt-activ}) concerns the family of equations (\ref{eqn:uc-lambda}), where the propagation speed depends on $\lambda$.


The following statement clarifies the crucial connection between universal activators and infinite derivative loss.

\begin{prop}[Universal activators lead to infinite derivative loss]\label{prop:ua2idl}

Let $H$ be a Hilbert space, and let $A$, $\{e_{n}\}$ and $\{\lambda_{n}\}$ be as in Definition~\ref{defn:upmo}. Let us assume, without loss of generality (since we can always consider a subsequence) that
\begin{equation}
\sum_{n=1}^{\infty}\frac{1}{\lambda_{n}}<+\infty.
\label{hp:series-ln}
\end{equation}

Let $T_{0}$ be a positive real number, let $\phi:(0,+\infty)\to(0,+\infty)$ be a function such that
\begin{equation}
\lim_{\lambda\to +\infty}\frac{\phi(\lambda)}{\log\lambda}= +\infty,
\label{hp:phi-log}
\end{equation}
and let $c\in L^{1}((0,T_{0}))$ be a universal activator of the sequence $\{\lambda_{n}\}$ with rate $\phi$. 

Then solutions to problem (\ref{eqn:main})--(\ref{eqn:main-data}) exhibits an infinite derivative loss according to Definition~\ref{defn:regularity}.

\end{prop}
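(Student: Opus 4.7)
The plan is to construct one solution of (\ref{eqn:main})--(\ref{eqn:main-data}) that witnesses the infinite derivative loss, by spreading the initial velocity across the eigenvectors $\{e_{n}\}$ of the multiplication operator. Concretely, I would take $u_{0}:=0$ and $u_{1}:=\sum_{n\ge 1}b_{n}e_{n}$ with
\begin{equation*}
b_{n}^{2}:=\exp\!\left(-\tfrac{1}{2}\phi(\lambda_{n})\right).
\end{equation*}
By the spectral theorem, the corresponding (generalized) solution reads $u(t)=\sum_{n\ge 1}b_{n}u_{n}(t)e_{n}$, where $u_{n}$ is the unique solution of the scalar ODE (\ref{eqn:ode-lambda-n})--(\ref{data:ode-lambda-n}). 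A preliminary point to settle, and the only real structural issue, is that this series has to be interpreted in the hyperdistributional framework mentioned at the start of the paper, since it cannot converge in $H$ for $t>0$ (that impossibility is precisely the infinite derivative loss one is proving); the reduction to the scalar ODEs is classical and uses only linearity, $c\in L^{1}((0,T_{0}))$ and the orthonormality of $\{e_{n}\}$.

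Setting $E_{n}(t):=|u_{n}'(t)|^{2}+\lambda_{n}^{2}|u_{n}(t)|^{2}$ and exploiting orthonormality, the relevant squared norms take the form
\begin{equation*}
\|A^{\beta}u_{1}\|^{2}=\sum_{n\ge 1}b_{n}^{2}\lambda_{n}^{2\beta},
\qquad
\|A^{-\beta+1/2}u(t)\|^{2}+\|A^{-\beta}u'(t)\|^{2}=\sum_{n\ge 1}b_{n}^{2}\lambda_{n}^{-2\beta}E_{n}(t).
\end{equation*}
By assumption (\ref{hp:phi-log}), $\phi(\lambda_{n})/\log\lambda_{n}\to+\infty$, so for every $M>0$ one has $b_{n}^{2}\le\lambda_{n}^{-M}$ for $n$ large. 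Choosing $M=2\beta+1$ and using (\ref{hp:series-ln}), the first series is dominated by a constant times $\sum 1/\lambda_{n}<+\infty$; hence $u_{1}\in D(A^{\beta})$ for every $\beta>0$, and since $u_{0}=0$ the initial data meet the regularity required by Definition~\ref{defn:regularity}.

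For the divergence of the second series, I fix $t\in(0,T_{0}]$ and $\beta>0$ and invoke the universal activator property (\ref{th:defn-un-act}): there are infinitely many indices $n$ with $E_{n}(t)\ge\tfrac{1}{2}\exp(\phi(\lambda_{n}))$, and for each such $n$
\begin{equation*}
b_{n}^{2}\lambda_{n}^{-2\beta}E_{n}(t)\;\ge\;\tfrac{1}{2}\exp\!\left(\tfrac{1}{2}\phi(\lambda_{n})\right)\lambda_{n}^{-2\beta}.
\end{equation*}
Applying (\ref{hp:phi-log}) once more, the right-hand side tends to $+\infty$ as $n\to+\infty$, so the second series contains infinitely many nonnegative terms diverging to $+\infty$ and is therefore itself infinite. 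Consequently at least one of $\|A^{-\beta+1/2}u(t)\|$ and $\|A^{-\beta}u'(t)\|$ is infinite, that is $(u(t),u'(t))\notin D(A^{-\beta+1/2})\times D(A^{-\beta})$, which is precisely the condition of infinite derivative loss.

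The main obstacle, beyond the preliminary hyperdistributional interpretation, is purely combinatorial: one has to choose $b_{n}$ so that its decay beats every polynomial weight $\lambda_{n}^{2\beta}$ (to ensure smoothness of the datum) while the complementary quantity $b_{n}^{2}\exp(\phi(\lambda_{n}))=\exp(\phi(\lambda_{n})/2)$ still beats every polynomial weight $\lambda_{n}^{2\beta}$ (to ensure divergence of the solution's energy). That both conditions can hold simultaneously for every $\beta>0$ is exactly the content of (\ref{hp:phi-log}), while (\ref{hp:series-ln}) plays only the auxiliary role of providing a convergent reference series $\sum 1/\lambda_{n}$ that absorbs the polynomial factors at the end of the regularity estimate.
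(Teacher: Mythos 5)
Your proposal is correct and follows essentially the same route as the paper's proof: the same choice of initial datum $u_{1}=\sum_{n}e^{-\phi(\lambda_{n})/4}e_{n}$ with $u_{0}=0$, the same use of (\ref{hp:phi-log}) to dominate $b_{n}^{2}\lambda_{n}^{2\beta}$ by $1/\lambda_{n}$ via (\ref{hp:series-ln}), and the same exploitation of the universal activator property to make the terms $b_{n}^{2}\lambda_{n}^{-2\beta}E_{n}(t)$ unbounded along a subsequence. The only cosmetic difference is that you extract an explicit subsequence from the $\limsup$ in (\ref{th:defn-un-act}) rather than arguing directly with the $\limsup$ of the weighted terms, which changes nothing.
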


\begin{proof}

For every positive integer $n$, let us set
\begin{equation}
a_{n}:=\exp\left(-\frac{\phi(\lambda_{n})}{4}\right),
\nonumber
\end{equation}
and let us consider problem (\ref{eqn:main})--(\ref{eqn:main-data}) with initial data
\begin{equation}
u_{0}:=0,
\qquad\qquad
u_{1}:=\sum_{n=1}^{\infty}a_{n}e_{n}.
\nonumber
\end{equation}

Due to (\ref{hp:phi-log}), for every $\alpha\in\re$ it turns out that
\begin{equation}
a_{n}^{2}\lambda_{n}^{2\alpha}=
\exp\left(-\frac{\phi(\lambda_{n})}{2}+2\alpha\log\lambda_{n}\right)\leq
\frac{1}{\lambda_{n}}
\nonumber
\end{equation}
when $n$ is large enough, and therefore from (\ref{hp:series-ln}) it follows that
\begin{equation}
\sum_{n=1}^{\infty}a_{n}^{2}\lambda_{n}^{2\alpha}<+\infty
\qquad
\forall\alpha\in\re,
\nonumber
\end{equation}
which proves that $u_{1}\in D(A^{\infty})$. 

On the other hand, the unique solution to (\ref{eqn:main})--(\ref{eqn:main-data}) is given by (a priori this series converges just in the sense of ultradistributions) 
\begin{equation}
u(t):=\sum_{n=1}^{\infty}a_{n}u_{n}(t)e_{n}
\qquad
\forall t\in[0,T_{0}],
\nonumber
\end{equation}
where $u_{n}(t)$ is the sequence of solutions to (\ref{eqn:ode-lambda-n})--(\ref{data:ode-lambda-n}). Due to (\ref{hp:phi-log}), for every $\beta\in\re$ it turns out that
\begin{eqnarray*}
a_{n}^{2}\left(|u_{n}'(t)|^{2}+\lambda_{n}^{2}|u_{n}(t)|^{2}\right)\lambda_{n}^{-2\beta} & = &
\left(|u_{n}'(t)|^{2}+\lambda_{n}^{2}|u_{n}(t)|^{2}\right)\exp(-\phi(\lambda_{n}))\cdot \\[0.5ex]
& & \mbox{}\cdot\exp\left(\phi(\lambda_{n})-\frac{\phi(\lambda_{n})}{2}-2\beta\log\lambda_{n}\right) \\[0.5ex]
& \geq & \left(|u_{n}'(t)|^{2}+\lambda_{n}^{2}|u_{n}(t)|^{2}\right)\exp(-\phi(\lambda_{n}))
\end{eqnarray*}
when $n$ is large enough. Since $c(t)$ is a universal activator of the sequence $\{\lambda_{n}\}$ with rate $\phi$, from the last inequality and (\ref{th:defn-un-act}) it follows that
\begin{equation}
\limsup_{n\to +\infty}a_{n}^{2}\left(|u_{n}'(t)|^{2}+\lambda_{n}^{2}|u_{n}(t)|^{2}\right)
\lambda_{n}^{-2\beta}\geq 1\qquad
\forall\beta\in\re,
\nonumber
\end{equation}
and therefore 
\begin{equation}
\sum_{n=1}^{\infty}
a_{n}^{2}\left(|u_{n}'(t)|^{2}+\lambda_{n}^{2}|u_{n}(t)|^{2}\right)\lambda_{n}^{-2\beta}
=+\infty
\qquad
\forall\beta\in\re.
\nonumber
\end{equation}

This proves that $(u(t),u'(t))\not\in D(A^{-\beta+1/2})\times D(A^{-\beta})$ for every $\beta\in\re$.
\end{proof}


Families of asymptotic activators are the basic tool in the construction of universal activators. Before entering into details, we recall a well-know result concerning the dependence of solutions to (\ref{eqn:u-lambda}) on the propagation speed $c(t)$, when $\lambda$ is considered a fixed parameter.

\begin{lemma}[Continuous dependence on the propagation speed]\label{lemma:cont-dep}

Let $T_{0}$ and $\lambda$ be fixed positive real numbers. Let us consider problem (\ref{eqn:u-lambda})--(\ref{data:ode-lambda}) with a sequence of propagation speeds $\{c_{n}(t)\}$ converging to some $c_{\infty}(t)$ in $L^{1}((0,T_{0}))$, and let $\{u_{n}(t)\}$ and $u_{\infty}(t)$ denote the corresponding solutions.

Then $u_{n}(t)\to u_{\infty}(t)$ and $u_{n}'(t)\to u_{\infty}'(t)$ uniformly in $[0,T_{0}]$.

\end{lemma}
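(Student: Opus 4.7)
The plan is to convert the second-order ODE into an integral equation and then apply a Grönwall-type argument, exploiting that $L^1$-convergence of the propagation speeds automatically gives uniform boundedness of $\{\|c_n\|_{L^1((0,T_0))}\}$.

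First I would rewrite problem (\ref{eqn:u-lambda})--(\ref{data:ode-lambda}) in its equivalent integral form
\begin{equation*}
u_n(t) = t - \lambda^{2}\int_{0}^{t}(t-s)\, c_n(s)\, u_n(s)\, ds,
\qquad
u_n'(t) = 1 - \lambda^{2}\int_{0}^{t}c_n(s)\, u_n(s)\, ds,
\end{equation*}
and the analogous identities for $u_\infty$. Since $\lambda$ is fixed, the factor $\lambda^{2}$ is just a constant, and boundedness of $\{\|c_n\|_{L^{1}}\}$ follows at once from $c_n\to c_\infty$ in $L^{1}$. A standard Grönwall argument applied to the first identity (bounding $|u_n(t)|\leq T_0+\lambda^{2}T_0\int_{0}^{t}|c_n(s)|\,|u_n(s)|\,ds$) then yields a uniform bound
\begin{equation*}
\|u_n\|_{C^{0}([0,T_0])}\leq C_{1},
\qquad
\|u_n'\|_{C^{0}([0,T_0])}\leq C_{2},
\end{equation*}
for constants independent of $n$.

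Next I would study the difference $w_n:=u_n-u_\infty$. Subtracting the integral identities gives
\begin{equation*}
w_n(t) = -\lambda^{2}\int_{0}^{t}(t-s)\, c_n(s)\, w_n(s)\, ds
          -\lambda^{2}\int_{0}^{t}(t-s)\bigl(c_n(s)-c_\infty(s)\bigr)u_\infty(s)\, ds,
\end{equation*}
so that
\begin{equation*}
|w_n(t)|\leq \lambda^{2}T_0\int_{0}^{t}|c_n(s)|\,|w_n(s)|\, ds
            +\lambda^{2}T_0\, C_{1}\,\|c_n-c_\infty\|_{L^{1}((0,T_0))}.
\end{equation*}
The integral form of Grönwall's inequality (with measure $\lambda^{2}T_0|c_n(s)|\,ds$, whose total mass is uniformly bounded) then yields
\begin{equation*}
\|w_n\|_{C^{0}([0,T_0])}\leq
\lambda^{2}T_0\, C_{1}\,\|c_n-c_\infty\|_{L^{1}}\cdot\exp\!\bigl(\lambda^{2}T_0\|c_n\|_{L^{1}}\bigr),
\end{equation*}
which tends to $0$ as $n\to+\infty$. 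This proves the uniform convergence $u_n\to u_\infty$ on $[0,T_0]$.

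Finally, for the derivatives I would plug the already established uniform convergence $u_n\to u_\infty$ into the second integral identity, writing
\begin{equation*}
u_n'(t)-u_\infty'(t) = -\lambda^{2}\int_{0}^{t}c_n(s)\bigl(u_n(s)-u_\infty(s)\bigr) ds
                     -\lambda^{2}\int_{0}^{t}\bigl(c_n(s)-c_\infty(s)\bigr)u_\infty(s)\, ds,
\end{equation*}
and estimate both terms by $\lambda^{2}\|c_n\|_{L^{1}}\|w_n\|_{C^{0}}$ and $\lambda^{2}C_{1}\|c_n-c_\infty\|_{L^{1}}$ respectively; both go to zero uniformly in $t$. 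There is essentially no serious obstacle here: the only mildly delicate point is to use $L^{1}$ convergence (rather than just pointwise or a.e.\ convergence) to ensure both the uniform $L^{1}$ bound on $\{c_n\}$ and the smallness of $\|c_n-c_\infty\|_{L^{1}}$, which is exactly the hypothesis provided by the lemma.
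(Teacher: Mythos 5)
Your argument is correct and complete: the integral-equation reformulation, the uniform a priori bound via Gr\"onwall with the $L^{1}$ kernel $\lambda^{2}T_{0}|c_{n}(s)|\,ds$, and the decomposition $c_{n}u_{n}-c_{\infty}u_{\infty}=c_{n}w_{n}+(c_{n}-c_{\infty})u_{\infty}$ are exactly the standard route. The paper itself offers no proof of this lemma (it is stated as a well-known fact about continuous dependence when $\lambda$ is a fixed parameter), and your write-up is precisely the argument that justifies it.
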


In the following result we pass from asymptotic to universal activators. This is the point where Baire category theorem discloses its power.

\begin{prop}[From asymptotic to universal activators]\label{prop:as2un}

Let $\phi:(0,+\infty)\to(0,+\infty)$ be a function such that $\phi(\lambda)\to +\infty$ as $\lambda\to +\infty$. 

Let $T_{0}$ be a positive real number, and let $\PS\subseteq L^{1}((0,T_{0}))$ be a subset that is a complete metric space with respect to some distance $d_{\PS}$ with the property that convergence with respect to $d_{\PS}$ implies convergence in $L^{1}((0,T_{0}))$.

Let us assume that there exists a subset $\mathcal{D}\subseteq\PS$, dense with respect to the distance $d_{\PS}$, such that for every $c\in\mathcal{D}$ there exists a family of asymptotic activators $\{c_{\lambda}\}\subseteq\PS$ with rate $\phi$ such that $c_{\lambda}\to c$, always with respect to the distance $d_{\PS}$.

Then, for every unbounded sequence $\{\lambda_{n}\}$ of positive real numbers, the set of elements in $\PS$ that are universal activators of the sequence $\{\lambda_{n}\}$ with rate $\phi$ is residual in $\PS$ (and in particular nonempty).

\end{prop}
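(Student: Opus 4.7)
The plan is to express the set $U$ of universal activators of $\{\lambda_{n}\}$ with rate $\phi$ as a countable intersection of open dense subsets of $(\PS, d_{\PS})$, and then invoke Baire's theorem. To this end, set
\[
f_{n}(t, c) := \bigl(|u_{n}'(t; c)|^{2} + \lambda_{n}^{2} |u_{n}(t; c)|^{2}\bigr)\exp\bigl(-\phi(\lambda_{n})\bigr),
\]
where $u_{n}(\cdot\,; c)$ denotes the solution to (\ref{eqn:ode-lambda-n})--(\ref{data:ode-lambda-n}) with propagation speed $c$, and for positive integers $k$, $N$, $m$ define
\[
A_{k, N, m} := \bigl\{c \in \PS : \forall\, t \in [1/k, T_{0}],\ \exists\, n \geq N \text{ with } f_{n}(t, c) > 1 - 1/m\bigr\}.
\]
Unpacking quantifiers shows that $U = \bigcap_{k, N, m} A_{k, N, m}$: the condition $\limsup_{n} f_{n}(t, c) \geq 1$ for every $t \in (0, T_{0}]$ translates precisely into membership in every $A_{k, N, m}$.

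The openness of each $A_{k, N, m}$ combines continuity in time with continuity in the propagation speed. Given $c \in A_{k, N, m}$, for each $t \in [1/k, T_{0}]$ I choose $n(t) \geq N$ and $\eta_{t} > 0$ with $f_{n(t)}(t, c) > 1 - 1/m + \eta_{t}$; by continuity of $s \mapsto f_{n(t)}(s, c)$ the inequality $f_{n(t)}(s, c) > 1 - 1/m + \eta_{t}/2$ persists on an open neighborhood $I_{t}$ of $t$. Compactness of $[1/k, T_{0}]$ extracts finitely many $t_{1}, \ldots, t_{p}$ whose $I_{t_{j}}$'s cover the interval, with a common margin $\eta > 0$ and a finite collection of witness indices $n_{j} := n(t_{j}) \geq N$. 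Lemma~\ref{lemma:cont-dep} applied to each of the finitely many $n_{j}$ (using that $d_{\PS}$-convergence forces $L^{1}$-convergence by hypothesis) provides a $d_{\PS}$-ball around $c$ on which all $f_{n_{j}}(\cdot, c')$ remain uniformly within $\eta$ of $f_{n_{j}}(\cdot, c)$ on $[0,T_{0}]$; this ball is contained in $A_{k, N, m}$.

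Density of $A_{k, N, m}$ is where the hypothesis on asymptotic activators enters, and the key point is that a single $c_{\lambda}$ from the family serves as a witness \emph{for all} $t \in [1/k, T_{0}]$ simultaneously. Given $c_{0} \in \PS$ and $\rho > 0$, pick $\tilde c \in \mathcal{D}$ with $d_{\PS}(\tilde c, c_{0}) < \rho/2$ and let $\{c_{\lambda}\} \subseteq \PS$ be a family of asymptotic activators with $c_{\lambda} \to \tilde c$. Choose an index $n$ so large that $n \geq N$, that $\lambda_{n} \geq \lambda_{1/k}$ (the threshold from (\ref{eqn:asympt-activ}) with $\delta = 1/k$), that $M_{1/k} \exp(\phi(\lambda_{n})) > 1 - 1/m$ (possible because $\phi(\lambda) \to +\infty$), and that $d_{\PS}(c_{\lambda_{n}}, \tilde c) < \rho/2$. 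Setting $c := c_{\lambda_{n}}$, equation (\ref{eqn:ode-lambda-n}) for this $c$ coincides with (\ref{eqn:uc-lambda}) at parameter $\lambda = \lambda_{n}$, so (\ref{eqn:asympt-activ}) yields $f_{n}(t, c) \geq M_{1/k} \exp(\phi(\lambda_{n})) > 1 - 1/m$ for every $t \in [1/k, T_{0}]$. Hence $c \in A_{k, N, m}$ with $d_{\PS}(c, c_{0}) < \rho$.

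The main technical obstacle is the openness step: the witness index $n$ depends on $t$, and it is precisely the compactness of $[1/k, T_{0}]$ that reduces this $t$-dependence to a finite family of $n_{j}$'s, making Lemma~\ref{lemma:cont-dep} (which handles one fixed $\lambda$ at a time) applicable. By contrast, the density argument becomes almost tautological once one observes that a single asymptotic activator at a sufficiently large $\lambda_{n}$ already realizes a growth far exceeding $1 - 1/m$ uniformly on $[1/k, T_{0}]$; the interplay between the three natural parameters of the problem — $t$, $n$, and $c$ — is exactly what Baire's theorem is built to disentangle.
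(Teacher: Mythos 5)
Your proof is correct and is essentially the paper's argument in complementary form: where you write the set of universal activators as a countable intersection of sets $A_{k,N,m}$ shown to be open and dense, the paper writes its complement as a countable union of closed sets $\mathcal{C}_{k}$ (essentially your $A_{k,N,m}^{c}$ with the three indices merged into one) shown to have empty interior, using the same two ingredients in the same places — Lemma~\ref{lemma:cont-dep} plus compactness of $[1/k,T_{0}]$ for the topological step, and a single asymptotic activator $c_{\lambda_{n}}$ at large $\lambda_{n}$ near a point of $\mathcal{D}$ for the density step. No gaps; the quantifier bookkeeping and the choice of $n$ in the density argument both check out.
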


\begin{proof}

The argument is exactly the same that appears in the proof of~\cite[Theorem~3.2]{gg:residual} and~\cite[Theorem~4.5]{gg:dgcs-critical}, and therefore here we limit ourselves to sketching the main steps for the convenience of the reader.

Let $\mathcal{C}$ denote the set of elements in $\PS$ that are not universal activators of the sequence $\{\lambda_{n}\}$ with rate $\phi$. We need to show that $\mathcal{C}$ is the countable union of closed subsets of $\PS$ with empty interior. 

To this end, we begin by observing that every element $c\in\mathcal{C}$ satisfies 
\begin{equation}
\exists t_{0}\in(0,T_{0}]
\qquad
\limsup_{n\to+\infty}
\left(|u_{n}'(t_{0})|^{2}+\lambda_{n}^{2}|u_{n}(t_{0})|^{2}\right)
\exp\left(-\phi(\lambda_{n})\right)<1,
\nonumber
\end{equation}
or equivalently
\begin{multline}
\exists t_{0}\in(0,T_{0}]\quad
\exists\ep_{0}>0\quad
\exists n_{0}\in\n\quad 
\forall n\geq n_{0}  \\[0.5ex]
\left(|u_{n}'(t_{0})|^{2}+\lambda_{n}^{2}|u_{n}(t_{0})|^{2}\right)
\exp\left(-\phi(\lambda_{n})\right)\leq 1-\ep_{0},
\nonumber
\end{multline}
where of course $t_{0}$, $n_{0}$ and $\ep_{0}$ might depend on $c$. In order to make this property more quantitative, for every positive integer $k$ we introduce the class $\mathcal{C}_{k}$ of all elements $c\in\PS$ such that
\begin{equation}
\exists t_{0}\in\left[\frac{1}{k},T_{0}\right]\quad 
\forall n\geq k
\qquad
\left(|u_{n}'(t_{0})|^{2}+\lambda_{n}^{2}|u_{n}(t_{0})|^{2}\right)
\exp\left(-\phi(\lambda_{n})\right)\leq 
1-\frac{1}{k}.
\nonumber
\end{equation}

Due to Lemma~\ref{lemma:cont-dep}, the set $\mathcal{C}_{k}$ is closed with respect to convergence in $L^{1}((0,T_{0}))$, and hence also with respect to convergence with respect to the distance $d_{\PS}$. Moreover, $\mathcal{C}$ is the union of all $\mathcal{C}_{k}$'s.

It remains to show that $\mathcal{C}_{k}$ has empty interior for every positive integer $k$. Let us assume by contradiction that this is not the case, and therefore there exist $k_{0}\in\n$, $c_{0}\in\mathcal{C}_{k_{0}}$ and $r_{0}>0$ such that $B(c_{0},r_{0})\subseteq\mathcal{C}_{k_{0}}$, where $B(c_{0},r_{0})$ denotes the ball in $\PS$ with center in $c_{0}$ and radius $r_{0}$ with respect to the distance $d_{\PS}$.

Since $\mathcal{D}$ is dense, up to reducing the radius we can always assume that $c_{0}\in\mathcal{D}$. Now we exploit the existence of a family of asymptotic activators $\{c_{\lambda}\}$ with rate $\phi$ that converges to $c$ in $\PS$. Due to this convergence, we know that $c_{\lambda}\in B(c_{0},r_{0})\subseteq\mathcal{C}_{k_{0}}$ when $\lambda$ is large enough, and therefore when $\lambda_{n}$ is large enough we know that
\begin{equation}
\exists t_{0n}\in[1/k_{0},T_{0}]
\qquad
\left(|u_{n}'(t_{0n})|^{2}+\lambda_{n}^{2}|u_{n}(t_{0n})|^{2}\right)
\exp\left(-\phi(\lambda_{n})\right)\leq 
1-\frac{1}{k_{0}}.
\label{absurd}
\end{equation}

On the other hand, from property (\ref{eqn:asympt-activ}) in the definition of asymptotic activators, applied with $\delta:=1/k_{0}$, we know that
\begin{equation}
\forall t\in[1/k_{0},T_{0}]
\qquad
|u_{n}'(t)|^{2}+\lambda_{n}^{2}|u_{n}(t)|^{2}\geq
M_{1/k_{0}}\exp\left(2\phi(\lambda_{n})\right)
\nonumber
\end{equation}
when $\lambda_{n}$ is large enough, and this contradicts (\ref{absurd}) when $\lambda_{n}\to+\infty$.
\end{proof}


\subsection{Initially constant functions}\label{sec:icf}

In the previous subsection we have reduced the search of propagation speeds generating infinite derivative loss to the search of families of asymptotic activators converging to elements of a suitable dense set. Now we identify this dense subset. 

\begin{defn}[Initially constant functions]\label{defn:icf}
\begin{em}

Let $T_{0}>0$ and $\mu_{2}>\mu_{1}>0$ be real numbers.

We call $\mathcal{D}(T_{0},\mu_{1},\mu_{2})$ the set of functions $c:[0,T_{0}]\to[\mu_{1},\mu_{2}]$ for which there exists two real numbers $T_{1}\in(0,T_{0})$ and $\mu_{3}\in(\mu_{1},\mu_{2})$ such that $c(t)=\mu_{3}$ for every $t\in[0,T_{1}]$.

\end{em}
\end{defn}

We stress that the ``initial constant'' $\mu_{3}$ is strictly in between $\mu_{1}$ and $\mu_{2}$. 

The result is that initially constant functions are dense in the classes of propagation speeds that we are interested in.

\begin{prop}[Density of initially constant functions]\label{prop:icf}

Let $T_{0}>0$ and $\mu_{2}>\mu_{1}>0$ be real numbers, and let $\omega:(0,T_{0}]\to(0,+\infty)$ and $\psi:(0,T_{0}]\to(0,+\infty)$ be two nonincreasing functions, with $\omega(t)\to +\infty$ as $t\to 0^{+}$.

Let us consider the classes of propagation speeds of Definition~\ref{defn:ps}, with the structure of complete metric space introduced in Remark~\ref{rmk:cms}. Let $\mathcal{D}(T_{0},\mu_{1},\mu_{2})$ denote the class of initially constant functions of Definition~\ref{defn:icf}.

Then it turns out that
\begin{enumerate}
\renewcommand{\labelenumi}{(\arabic{enumi})}

\item  $\mathcal{D}(T_{0},\mu_{1},\mu_{2})\cap\PSu(T_{0},\mu_{1},\mu_{2},\omega)$ is dense in $\PSu(T_{0},\mu_{1},\mu_{2},\omega)$,

\item  $\mathcal{D}(T_{0},\mu_{1},\mu_{2})\cap\PSd(T_{0},\mu_{1},\mu_{2},\omega,\psi)$ is dense in $\PSd(T_{0},\mu_{1},\mu_{2},\omega,\psi)$.

\end{enumerate}

\end{prop}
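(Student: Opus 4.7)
The plan is to prove both density claims by a two-stage construction. Given a propagation speed $c$ in the relevant space and $\epsilon>0$, I would first perform a contraction $c_{\eta}(t):=(1-\eta)c(t)+\eta(\mu_{1}+\mu_{2})/2$ for a small parameter $\eta>0$. This function still belongs to the relevant space: its values stay strictly inside $(\mu_{1},\mu_{2})$, and its first and (when relevant) second derivatives satisfy (\ref{hp:fdl-c'}) and (\ref{hp:fdl-c''}) with a factor $(1-\eta)$ to spare. Moreover $c_{\eta}\to c$ in the respective metric as $\eta\to 0^{+}$: the uniform difference is at most $\eta(\mu_{2}-\mu_{1})/2$, and the weighted suprema of derivative differences are at most $\eta T_{0}$, because the weights $t^{2}/\omega(t)$ in (\ref{defn:d-ps1}) and $t^{3}\exp(-\psi(t))/\omega(t)^{2}$ in (\ref{defn:d-ps2}) exactly cancel the worst-case growth of $|c'|$ and $|c''|$.

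Second, I would apply a smooth cutoff near the origin. Using the standard cutoff $\theta$ from (\ref{defn:cutoff}), chosen so that $\theta'$ vanishes at the endpoints of its ramp, set $\theta_{\tau}(t):=\theta((t-\tau)/\tau)$, so that $\theta_{\tau}\equiv 0$ on $[0,\tau]$ and $\theta_{\tau}\equiv 1$ on $[2\tau,T_{0}]$. Then define $\tilde c_{\eta}$ on $(0,T_{0}]$ by prescribing $\tilde c_{\eta}'(t):=c_{\eta}'(t)\theta_{\tau}(t)$ together with $\tilde c_{\eta}(T_{0}):=c_{\eta}(T_{0})$. By construction $\tilde c_{\eta}$ coincides with $c_{\eta}$ on $[2\tau,T_{0}]$, while on $[0,\tau]$ an integration by parts shows that it equals the constant
\begin{equation}
\mu_{3}:=\int_{\tau}^{2\tau}c_{\eta}(s)\theta_{\tau}'(s)\,ds,
\nonumber
\end{equation}
which is a weighted average of $c_{\eta}$ on $[\tau,2\tau]$ with nonnegative weights of total mass $1$ and hence lies strictly inside $(\mu_{1},\mu_{2})$. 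The first-derivative bound $|\tilde c_{\eta}'(t)|\leq(1-\eta)\omega(t)/t$ is immediate because $\theta_{\tau}\in[0,1]$.

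The main obstacle is verifying the second-derivative bound in the $\PSd$ case. Differentiating yields $\tilde c_{\eta}''(t)=c_{\eta}''(t)\theta_{\tau}(t)+c_{\eta}'(t)\theta_{\tau}'(t)$, and the extra term $c_{\eta}'\theta_{\tau}'$ is bounded by $2(1-\eta)\|\theta'\|_{\infty}\omega(t)/t^{2}$ on $[\tau,2\tau]$, while the available budget is $\omega(t)^{2}\exp(\psi(t))/t^{2}$. To absorb this extra term into the slack $\eta\,\omega(t)^{2}\exp(\psi(t))/t^{2}$ provided by the contraction, I need $2\|\theta'\|_{\infty}(1-\eta)/\eta\leq\omega(t)\exp(\psi(t))$ on $[\tau,2\tau]$. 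This is where the hypothesis $\omega(t)\to+\infty$ as $t\to 0^{+}$ enters: by monotonicity $\omega(t)\exp(\psi(t))\geq\omega(2\tau)$ on $[\tau,2\tau]$, and the right-hand side tends to $+\infty$ as $\tau\to 0^{+}$, so the required inequality holds once $\tau$ is small enough in terms of $\eta$. Once both derivative bounds are verified, closeness of $\tilde c_{\eta}$ to $c_{\eta}$ in the respective metric is routine: the two functions agree on $[2\tau,T_{0}]$, the uniform difference on $[0,2\tau]$ is controlled by the modulus of continuity of $c_{\eta}$ at zero, and the weighted suprema of the derivative differences are $O(\tau)$ for the same cancellation reason as in the contraction step. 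Choosing $\eta$ small and then $\tau$ small in terms of $\eta$ completes the proof.
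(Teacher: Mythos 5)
Your argument is correct and follows essentially the same two-step strategy as the paper: first an affine contraction toward an interior constant to create a uniform margin in all the inequalities, then a smooth cutoff supported near the origin whose commutator terms are absorbed into that margin using the monotone divergence $\omega(2\tau)\to+\infty$. The only real difference is that you cut off the \emph{derivative} and integrate back, rather than interpolating the function values as the paper does via $\cd(t)=\chat(\delta)+\theta((t-\delta)/\delta)(\chat(t)-\chat(\delta))$; this makes the first-derivative bound immediate and leaves a single commutator term in the second-derivative check, at the price that the uniform convergence on $[0,2\tau]$ must go through the integration-by-parts identity $\tilde c_{\eta}(t)-c_{\eta}(t)=\int_{t}^{2\tau}\left(c_{\eta}(s)-c_{\eta}(t)\right)\theta_{\tau}'(s)\,ds$ (the a priori bound $|c_{\eta}'(s)|\leq\omega(s)/s$ is not integrable near the origin, so the naive estimate by $\int|c_{\eta}'|$ would not close), which is precisely the oscillation bound you invoke.
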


\begin{proof}

For the sake of shortness, in the sequel we omit the dependence on the parameters, and we denote the function spaces by $\mathcal{D}$, $\PSu$ and $\PSd$.

\paragraph{\textmd{\textit{Statement (1)}}}

We need to show that, for every $c_{0}\in\PSu$ and every $r>0$, there exists $c\in\mathcal{D}\cap\PSu$ such that $d_{\PSu}(c,c_{0})\leq r$. Our construction involves two steps.

In the first step we show that there exists $\chat\in\PSu$ such that 
\begin{equation}
d_{\PSu}(\chat,c_{0})\leq r/2,
\label{est:r/2-1}
\end{equation}
and such that $\chat$ does not saturate the inequalities in the definition of $\PSu$, namely there exists $\ep_{0}>0$ such that
\begin{equation}
(1+\ep_{0})\mu_{1}\leq\chat(t)\leq (1-\ep_{0})\mu_{2}
\qquad
\forall t\in[0,T_{0}],
\label{non-sat-c}
\end{equation}
and
\begin{equation}
|\chat'(t)|\leq(1-\ep_{0})\frac{\omega(t)}{t}
\qquad
\forall t\in(0,T_{0}].
\label{non-sat-c'}
\end{equation}

To this end, for every $\ep>0$ we consider the affine transformation $A_{\ep}:\re\to\re$ such that $A_{\ep}(\mu_{1})=(1+\ep)\mu_{1}$ and  $A_{\ep}(\mu_{2})=(1-\ep)\mu_{2}$, and we observe that, when $\ep$ is small enough, the composition $A_{\ep}(c(t))$ satisfies (\ref{non-sat-c}), (\ref{non-sat-c'}) and (\ref{est:r/2-1}). 

In the second step we show that there exists $c\in\mathcal{D}\cap\PSu$ such that
\begin{equation}
d_{\PSu}(c,\chat)\leq r/2.
\label{est:r/2-2}
\end{equation}

To this end, we consider a cutoff function $\theta:\re\to\re$ of class $C^{\infty}$ satisfying (\ref{defn:cutoff}), and for every $\delta\in(0,T_{0}/2)$ we set
\begin{equation}
\cd(t):=\chat(\delta)+\theta\left(\frac{t-\delta}{\delta}\right)\left(\chat(t)-\chat(\delta)\right)
\qquad
\forall t\in[0,T_{0}].
\label{defn:cd}
\end{equation}

We claim that $\cd\in\mathcal{D}\cap\PSu$ if $\delta$ is small enough, and that $d_{\PSu}(\cd,\chat)\to 0$ as $\delta\to 0^{+}$. If we prove these claims, then inequality (\ref{est:r/2-2}) is satisfied for $\delta$ small enough, and taking (\ref{est:r/2-1}) into account this means that any such $\cd$ is the required function.

Due to the properties of $\theta$, the function $\cd$ coincides with the constant $\chat(\delta)$ in the interval $[0,\delta]$. Since $\chat(\delta)\in(\mu_{1},\mu_{2})$, this proves that $\cd\in\mathcal{D}$. 

When $\delta$ is small enough we show that $\cd\in\PSu$, namely that
\begin{equation}
\mu_{1}\leq\cd(t)\leq\mu_{2}
\qquad
\forall t\in[0,T_{0}],
\label{est:cd}
\end{equation}
and
\begin{equation}
|\cd'(t)|\leq\frac{\omega(t)}{t}
\qquad
\forall t\in(0,T_{0}].
\label{est:cd'}
\end{equation}

To begin with, we set
\begin{equation}
\Gamma_{\delta}:=\max\left\{|\chat(t)-\chat(\delta)|:t\in[0,2\delta]\right\},
\label{defn:gamma-delta}
\end{equation}
and we observe that $\Gamma_{\delta}\to 0$ as $\delta\to 0^{+}$ because $c_{*}$ is continuous up to $t=0$. Since $\cd(t)=\chat(t)$ for every $t\in[2\delta,T_{0}]$, and 
\begin{equation}
|\cd(t)-\chat(t)|=
\left|1-\theta\left(\frac{t-\delta}{\delta}\right)\right|\cdot|\chat(t)-\chat(\delta)|\leq
\Gamma_{\delta}
\qquad
\forall t\in[0,2\delta],
\nonumber
\end{equation}
this proves that
\begin{equation}
\cd(t)\to\chat(t)
\qquad
\mbox{uniformly in }[0,T_{0}].
\nonumber
\end{equation}

Since $\chat$ satisfies the stronger inequality (\ref{non-sat-c}), it follows that $\cd$ satisfies (\ref{est:cd}) when $\delta$ is small enough.

As for the derivative, we observe that (\ref{est:cd'}) holds true almost for free in the interval $[0,\delta]$, where $\cd'(t)=0$, and in the interval $[2\delta,T_{0}]$, where $\cd'(t)$ coincides with $\chat'(t)$, which in turns satisfies the stronger estimate (\ref{non-sat-c'}). In the interval $[\delta,2\delta]$ the derivative of $\cd$ is given by
\begin{equation}
\cd'(t)=\theta'\left(\frac{t-\delta}{\delta}\right)\frac{1}{\delta}(\chat(t)-\chat(\delta))+
\theta\left(\frac{t-\delta}{\delta}\right)\chat'(t).
\nonumber
\end{equation}

From (\ref{defn:gamma-delta}) and the monotonicity of $\omega(t)$ it follows that
\begin{eqnarray*}
\left|\theta'\left(\frac{t-\delta}{\delta}\right)\frac{1}{\delta}(\chat(t)-\chat(\delta))\right|
& \leq & \|\theta'\|_{\infty}\frac{\Gamma_{\delta}}{\delta}  \\
& = & \frac{\|\theta'\|_{\infty}\Gamma_{\delta}}{\delta}\cdot
\frac{t}{\omega(t)}\cdot\frac{\omega(t)}{t} \\
& \leq & \frac{2\|\theta'\|_{\infty}\Gamma_{\delta}}{\omega(2\delta)}\cdot
\frac{\omega(t)}{t},
\end{eqnarray*}
and therefore, taking (\ref{non-sat-c'}) into account, we deduce that
\begin{equation}
|\cd'(t)|\leq\left\{
\frac{2\|\theta'\|_{\infty}\Gamma_{\delta}}{\omega(2\delta)}+1-\ep_{0}
\right\}\frac{\omega(t)}{t}
\qquad
\forall t\in[\delta,2\delta].
\nonumber
\end{equation}

Since $\Gamma_{\delta}\to 0$ and $\omega(2\delta)\to +\infty$ as $\delta\to 0^{+}$, inequality (\ref{est:cd'}) holds true also in $[\delta,2\delta]$ provided that $\delta$ is small enough. 

It remains to show that $\cd\to\chat$ in $\PSu$. Since we already now that $\cd\to\chat$ uniformly in $[0,T_{0}]$, we need only to check that
\begin{equation}
\cd(t)\to\chat(t)
\qquad
\mbox{uniformly in }[\tau,T_{0}]
\nonumber
\end{equation}
for every fixed $\tau\in(0,T_{0})$. This is true almost for free because, once that $\tau$ has been fixed, we know that $\cd'(t)=\chat(t)$ in $[\tau,T_{0}]$ when $\delta$ is small enough.
\bigskip

\paragraph{\textmd{\textit{Statement (2)}}}

The argument is analogous to the case of $\PSu$. To begin with, using a suitable affine transformation we produce a function $\chat$ such that
\begin{equation}
d_{\PSd}(\chat,c_{0})\leq r/2,
\nonumber
\end{equation}
and such that $\chat$ does not saturate the inequalities in the definition of $\PSd$, namely there exists $\ep_{0}>0$ such that $\chat$ satisfies (\ref{non-sat-c}), (\ref{non-sat-c'}), and
\begin{equation}
|\chat''(t)|\leq
(1-\ep_{0})\frac{\omega(t)^{2}}{t^{2}}\exp\left(\psi(t)\right)
\qquad
\forall t\in(0,T_{0}].
\label{non-sat-c''}
\end{equation}

Once that $\chat$ has been chosen, we define $\cd$ as in (\ref{defn:cd}), and we claim that, when $\delta$ is small enough, it turns out that $d_{\PSd}(\cd,\chat)\leq r/2$ and $\cd\in\PSd$. The argument is the same as before, but now we have to verify also the condition on the second derivative, namely
\begin{equation}
|\cd''(t)|\leq\frac{\omega(t)^{2}}{t^{2}}\exp\left(\psi(t)\right)
\qquad
\forall t\in(0,T_{0}].
\label{est:cd''}
\end{equation}

Again this is true almost for free in the interval $[0,\delta]$, where the function is constant, and in the interval $[2\delta,T_{0}]$, where $\cd''(t)$ coincides with $\chat''(t)$, which in turn satisfies the stronger condition (\ref{non-sat-c''}). In the interval $[\delta,2\delta]$ the second derivative is given by
\begin{equation}
\cd''(t)=\theta''\left(\frac{t-\delta}{\delta}\right)\frac{1}{\delta^{2}}(\chat(t)-\chat(\delta))+
2\theta'\left(\frac{t-\delta}{\delta}\right)\frac{1}{\delta}\chat'(t)+
\theta\left(\frac{t-\delta}{\delta}\right)\chat''(t).
\nonumber
\end{equation}

Let us estimate the three terms separately. As for the first one, from the monotonicity of $\omega(t)$ we deduce that
\begin{eqnarray*}
\left|\theta''\left(\frac{t-\delta}{\delta}\right)\frac{1}{\delta^{2}}(\chat(t)-\chat(\delta))\right|
& \leq & \|\theta''\|_{\infty}\frac{|\chat(t)-\chat(\delta)|}{\delta^{2}}  \\[0.5ex]
& \leq & \frac{\|\theta''\|_{\infty}\Gamma_{\delta}}{\delta^{2}}\cdot\frac{t^{2}}{\omega(t)^{2}}
\cdot\frac{\omega(t)^{2}}{t^{2}}  \\[1ex]
& \leq & \frac{4\|\theta''\|_{\infty}\Gamma_{\delta}}{\omega(2\delta)^{2}}
\cdot\frac{\omega(t)^{2}}{t^{2}}\exp\left(\psi(t)\right).
\end{eqnarray*}

As for the second one, from (\ref{hp:fdl-c'}) and the monotonicity of $\omega(t)$ we deduce that
\begin{eqnarray*}
\left|2\theta'\left(\frac{t-\delta}{\delta}\right)\frac{1}{\delta}\chat'(t)\right|
& \leq & \frac{2\|\theta'\|_{\infty}}{\delta}|\chat'(t)| \\
& \leq & \frac{2\|\theta'\|_{\infty}}{\delta}\cdot\frac{\omega(t)}{t} \\[0.5ex]
& = & \frac{2\|\theta'\|_{\infty}}{\delta}\cdot
\frac{t}{\omega(t)}\cdot\frac{\omega(t)^{2}}{t^{2}} \\[0.5ex]
& \leq & \frac{4\|\theta'\|_{\infty}}{\omega(2\delta)}\cdot
\frac{\omega(t)^{2}}{t^{2}}\exp\left(\psi(t)\right).
\end{eqnarray*}

As for the third one, from (\ref{non-sat-c''}) we deduce that
\begin{equation}
\left|\theta\left(\frac{t-\delta}{\delta}\right)\chat''(t)\right|\leq
|\chat''(t)|\leq
(1-\ep_{0})\frac{\omega(t)^{2}}{t^{2}}\exp\left(\psi(t)\right).
\nonumber
\end{equation}

From all these estimates we conclude that
\begin{equation}
|\cd''(t)|\leq\left\{
\frac{4\|\theta''\|_{\infty}\Gamma_{\delta}}{\omega(2\delta)^{2}}+
\frac{4\|\theta'\|_{\infty}}{\omega(2\delta)}+
1-\ep_{0}\right\}
\frac{\omega(t)^{2}}{t^{2}}\exp\left(\psi(t)\right),
\nonumber
\end{equation}
which implies (\ref{est:cd''}) when $\delta$ is small enough because $\Gamma_{\delta}\to 0$ and $\omega(2\delta)\to +\infty$.
\end{proof}


\subsection{Asymptotic activators for initially constant functions}\label{sec:as-act-icf}

This subsection is the technical core of our construction. We show that, for every propagation speed $\chat(t)$ that is initially constant and belongs to the classes we are interested in, there exists a family $\{\cl(t)\}$ of asymptotic activators that converges to $\chat(t)$ in the same class. Since we already know that initially constant propagation speeds are dense, this is exactly what we need in order to apply Proposition~\ref{prop:as2un} and deduce the existence of universal activators, which in turn lead to the infinite derivative loss.

To begin with, we fix notations. Let $T_{0}$, $T_{1}$, $\mu_{1}$, $\mu_{2}$, $\gamma$ be positive real numbers such that
\begin{equation}
0<T_{1}<T_{0}
\qquad\mbox{and}\qquad
0<\mu_{1}<\gamma^{2}<\mu_{2}.
\nonumber
\end{equation}

Let $\chat:[0,T_{0}]\to[\mu_{1},\mu_{2}]$ be a function such that
\begin{equation}
\chat(t)=\gamma^{2}
\qquad
\forall t\in[0,T_{1}].
\nonumber
\end{equation}

For every large enough real number $\lambda$, let $\al$ and $\bl$ be real numbers such that
\begin{equation}
0<\al<2\al<\frac{\bl}{2}<\bl<T_{1},
\label{hp:al-bl-1}
\end{equation}
and 
\begin{equation}
\frac{\gamma\lambda\al}{2\pi}\in\n
\qquad\quad\mbox{and}\quad\qquad
\frac{\gamma\lambda\bl}{2\pi}\in\n.
\label{hp:al-bl-2}
\end{equation}

Let $\omega:(0,T_{0}]\to(0,+\infty)$ be a nonincreasing function satisfying (\ref{hp:omega2infty}), and let us set
\begin{equation}
\oml:=\min\{\omega(\bl),\log\lambda\}.
\label{defn:oml}
\end{equation}

Let us choose a cutoff function $\theta:\re\to\re$ of class $C^{\infty}$ satisfying (\ref{defn:cutoff}), and let us define $\epl:[0,T_{0}]\to\re$ by
\begin{equation}
\epl(t):=\left\{
\begin{array}{l@{\qquad}l}
0 & \mbox{if }t\in[0,\al]\cup[\bl,T_{0}], \\[1ex]
\dfrac{\oml}{t} & \mbox{if }t\in[2\al,\bl/2], \\[2.5ex]
\dfrac{\oml}{t}\cdot\theta\left(\dfrac{t-\al}{\al}\right) & \mbox{if }t\in[\al,2\al], \\[3ex]
\dfrac{\oml}{t}\cdot\theta\left(\dfrac{2(\bl-t)}{\bl}\right) & \mbox{if }t\in[\bl/2,\bl].
\end{array}\right.
\nonumber
\end{equation}

In words, $\epl(t)$ is a function of class $C^{\infty}$ that vanishes outside $[\al,\bl]$, and coincides with $\oml/t$ in the inner interval $[2\al,\bl/2]$.  Finally, let us define $\cl:[0,T_{0}]\to\re$ as
\begin{equation}
\cl(t):=\chat(t)-
\frac{\epl(t)}{4\gamma\lambda}\sin(2\gamma\lambda t)-
\frac{\epl'(t)}{8\gamma^{2}\lambda^{2}}\sin^{2}(\gamma\lambda t)-
\frac{\epl(t)^{2}}{64\gamma^{4}\lambda^{2}}\sin^{4}(\gamma\lambda t)
\label{defn:cl}
\end{equation}
for every $t\in[0,T_{0}]$. We observe that $\cl(t)$ coincides with $\chat(t)$ in the intervals $[0,\al]$ and $[\bl,T_{1}]$, where it is actually constantly equal to $\gamma^{2}$, and in the interval $[T_{1},T_{0}]$. In the interval $[\al,\bl]$ the function $\cl(t)$ is close to the constant $\gamma^{2}$, but it has a highly oscillatory behavior due to the trigonometric terms.


Now we check that $\{\cl(t)\}$ is a family of asymptotic activators converging to $\chat(t)$. To this end, in the next two results we show first the growth of solutions required in (\ref{eqn:asympt-activ}), and then the convergence $\cl(t)\to \chat(t)$ in the appropriate space.

\begin{prop}[Asymptotic activators -- Growth of solutions]\label{prop:growth}

Let $\chat(t)$, $\al$, $\bl$, $\omega(t)$, $\oml$, and $\cl(t)$ be as in the construction described above.

Let us assume that $\bl\to 0$ (and hence also $\al\to 0$ and $\oml\to +\infty$), and that
\begin{equation}
\lim_{\lambda\to +\infty}\frac{\oml}{\lambda\al}=0
\qquad\quad\mbox{and}\qquad\quad
\lim_{\lambda\to +\infty}\frac{\bl}{\al}=+\infty.
\label{hp:abl-1b}
\end{equation}

Then $\{\cl(t)\}$ is a family of asymptotic activators with rate
\begin{equation}
\phi(\lambda):=\frac{\oml}{32\gamma^{2}}\log\left(\frac{\bl}{\al}\right).
\label{defn:phi}
\end{equation}

\end{prop}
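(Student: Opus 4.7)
The plan is to construct, for each large $\lambda$, an explicit ``approximate solution'' $w_\lambda(t)$ on $[\al,\bl]$, show that it grows multiplicatively by a factor comparable to $\exp(\phi(\lambda))$, and then link it to the actual solution $\vl$ of (\ref{eqn:uc-lambda})--(\ref{data:ode-lambda}) at bounded cost on each side. Inspired by the Colombini--De Giorgi--Spagnolo construction recalled in the introduction, I would take
$$w_\lambda(t):=\sin(\gamma\lambda t)\exp(g_\lambda(t)),$$
where $g_\lambda$ vanishes for $t\leq\al$ and has derivative of the form $g_\lambda'(t)=\frac{\epl(t)}{32\gamma^{2}}\bigl(1-\cos(2\gamma\lambda t)\bigr)$ in the inner region $[2\al,\bl/2]$. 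A direct computation shows that with this choice (and after integrating by parts the $\cos$-term so as to exhibit the $\epl$-dependent boundary contribution) the three explicit trigonometric corrections appearing in formula (\ref{defn:cl}) for $\cl$ are precisely what is needed so that $w_\lambda$ becomes an \emph{exact} solution of $w_\lambda''+\lambda^{2}\cl(t)w_\lambda=0$ wherever $\epl(t)=\oml/t$; the three corrections account respectively for the main CDGS-type oscillation ($\sin(2\gamma\lambda t)$), for the time-dependence of $\epl$ ($\sin^{2}(\gamma\lambda t)$), and for the quadratic $\sin^{4}$ contribution already present in the original CDGS case.

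On $[0,\al]$, where $\cl\equiv\gamma^{2}$ and $\epl\equiv 0$, the explicit solution is $\vl(t)=\sin(\gamma\lambda t)/(\gamma\lambda)$, which by (\ref{hp:al-bl-2}) gives $\vl(\al)=0$ and $\vl'(\al)=1$; since $g_\lambda(\al)=0$, the rescaled function $w_\lambda/(\gamma\lambda)$ shares exactly these initial data at $t=\al$. On $[\al,\bl]$ I would then write $\vl=w_\lambda/(\gamma\lambda)+\rho_\lambda$ and control the remainder $\rho_\lambda$ via a Duhamel/Gronwall argument against a forcing supported only in the transition zones $[\al,2\al]\cup[\bl/2,\bl]$, where $\epl$ deviates from $\oml/t$. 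The growth of the main part is quantified by
$$g_\lambda(\bl)-g_\lambda(\al)\;\geq\;\frac{1}{32\gamma^{2}}\int_{2\al}^{\bl/2}\epl(s)\,ds-O(1)\;=\;\frac{\oml}{32\gamma^{2}}\log\!\left(\frac{\bl}{4\al}\right)-O(1),$$
and the condition $\bl/\al\to+\infty$ in (\ref{hp:abl-1b}) makes $\log(\bl/(4\al))\sim\log(\bl/\al)$, which after passing to the squared energy $|\vl'|^{2}+\lambda^{2}|\vl|^{2}$ yields precisely the rate (\ref{defn:phi}).

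Finally, on $[\bl,T_{0}]$ the propagation speed $\cl$ agrees with $\chat$, so the hyperbolic energy is conserved on $[\bl,T_{1}]$ (where $\chat\equiv\gamma^{2}$) and on $[T_{1},T_{0}]$ it is multiplied by a factor independent of $\lambda$; since $\bl\to 0$, every interval $[\delta,T_{0}]$ is eventually contained in $[\bl,T_{0}]$, and the uniform equivalence between Kovaleskyan and hyperbolic energies yields (\ref{eqn:asympt-activ}). The principal technical obstacle is the error analysis in the transition regions $[\al,2\al]$ and $[\bl/2,\bl]$, where the cutoff $\theta$ introduces additional lower-order forcings involving derivatives of $\epl$ that are not absorbed by (\ref{defn:cl}); the assumption $\oml/(\lambda\al)\to 0$ in (\ref{hp:abl-1b}) is exactly what is needed to ensure that the integrated error on these two regions is $o(\phi(\lambda))$ and thus dominated by the exponential growth built up on the inner interval $[2\al,\bl/2]$.
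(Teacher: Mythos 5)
Your overall strategy coincides with the paper's: an explicit Colombini--De Giorgi--Spagnolo-type solution $\sin(\gamma\lambda t)\,e^{g_\lambda(t)}$ on $[\al,\bl]$, matched at $t=\al$ and $t=\bl$ through the integrality conditions (\ref{hp:al-bl-2}), followed by energy conservation on $[\bl,T_{1}]$ and a $\lambda$-independent Gronwall bound from below on $[T_{1},T_{0}]$. However, two points do not hold up as written. First, the constant in your exponent is wrong: for $w_\lambda=\sin(\gamma\lambda t)\exp(g_\lambda(t))$ to solve $w_\lambda''+\lambda^{2}\cl(t)w_\lambda=0$ with $\cl$ given by (\ref{defn:cl}), one must take
$$g_\lambda'(t)=\frac{\epl(t)}{8\gamma^{2}}\sin^{2}(\gamma\lambda t)=\frac{\epl(t)}{16\gamma^{2}}\bigl(1-\cos(2\gamma\lambda t)\bigr),$$
and only then does the direct computation reproduce the coefficients $\frac{\epl}{4\gamma\lambda}$, $\frac{\epl'}{8\gamma^{2}\lambda^{2}}$, $\frac{\epl^{2}}{64\gamma^{4}\lambda^{2}}$ of (\ref{defn:cl}); with your $\frac{\epl}{32\gamma^{2}}(1-\cos(2\gamma\lambda t))$ the identity fails, so $w_\lambda$ is not a solution. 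The error is not cosmetic for the statement as claimed: with your constant the rigorous lower bound at $t=\bl$ is $\exp\bigl(\frac{\oml}{16\gamma^{2}}\log(\bl/(4\al))\bigr)=\exp(2\phi(\lambda))\cdot\exp\bigl(-\frac{\oml\log 4}{16\gamma^{2}}\bigr)$, which misses $M_{\delta}\exp(2\phi(\lambda))$ by an unbounded factor since $\oml\to+\infty$. With the correct constant the exponent is asymptotically $2\phi(\lambda)$ rather than $\phi(\lambda)$, and this factor-two slack is exactly what absorbs the $\log 4$ and the integration-by-parts remainder.

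Second, the Duhamel/Gronwall analysis in the transition zones, which you single out as the principal technical obstacle and leave unexecuted, is not needed and signals a misreading of the construction. Formula (\ref{defn:cl}) is expressed in terms of $\epl(t)$ and $\epl'(t)$ on the \emph{whole} of $[\al,\bl]$, cutoffs included, so with $g_\lambda(t)=\frac{1}{8\gamma^{2}}\int_{\al}^{t}\epl(s)\sin^{2}(\gamma\lambda s)\,ds$ the function $\frac{1}{\gamma\lambda}\sin(\gamma\lambda t)e^{g_\lambda(t)}$ is an \emph{exact} solution on all of $[\al,\bl]$, not only where $\epl(s)=\oml/s$. The cutoff enters solely in the lower bound for $\int_{\al}^{\bl}\epl\sin^{2}$: since $\epl\geq 0$ everywhere, one restricts the integral to $[2\al,\bl/2]$ and integrates by parts the oscillatory part, the boundary terms vanishing because $2\gamma\lambda s\in\pi\z$ at $s=2\al$ and $s=\bl/2$, and the remainder being $O(\oml/(\lambda\al))=o(1)$ by (\ref{hp:abl-1b}). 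If you insist on defining $g_\lambda$ only on the inner region and treating $[\al,2\al]\cup[\bl/2,\bl]$ perturbatively, you would have to actually estimate the Duhamel contribution of a forcing involving $\epl'\sim\oml/\al^{2}$, which your sketch does not do; the clean route is simply to put $\epl$ itself, with its cutoffs, into the exponent.
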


\begin{proof}

Let $\{\vl(t)\}$ denote the family of solutions to (\ref{eqn:uc-lambda})--(\ref{data:ode-lambda}). According to Definition~\ref{defn:un-act} we need to check that, for every $\delta\in(0,T_{0})$, there exist two positive constants $M_{\delta}$ and $\lambda_{\delta}$ such that
\begin{equation}
|u_{\lambda}'(t)|^{2}+\lambda^{2}|u_{\lambda}(t)|^{2}\geq
M_{\delta}\exp(2\phi(\lambda))
\qquad
\forall t\in[\delta,T_{0}]
\quad
\forall\lambda\geq\lambda_{\delta}.
\label{th:as-act}
\end{equation}

So let us assume that $\delta\in(0,T_{0})$ has been fixed. We recall that $\chat(t)$ is constant, equal to $\gamma^{2}$, in some interval $[0,T_{1}]$. Without loss of generality, we can always assume that $T_{1}<\delta$. 
Now we estimate $\vl(t)$ in the four subintervals $[0,\al]$, $[\al,\bl]$, $[\bl,T_{1}]$, $[T_{1},T_{0}]$, even if (\ref{th:as-act}) involves only values of $t$ greater than or equal to $\delta$.

\paragraph{\textmd{\textit{Estimate in $[0,\al]$}}}

In this interval $\cl(t)\equiv\gamma^{2}$, and therefore 
\begin{equation}
\vl(t)=\frac{1}{\gamma\lambda}\sin(\gamma\lambda t)
\qquad
\forall t\in[0,\al].
\nonumber
\end{equation}

Since $\gamma\lambda\al$ is an integer multiple of $2\pi$, we deduce that $\vl(\al)=0$ and $\vl'(\al)=1$.

\paragraph{\textmd{\textit{Estimate in $[\al,\bl]$}}}

In this interval $\cl(t)$ is given by (\ref{defn:cl}) with $c_{*}(t)\equiv\gamma^{2}$, and it possible to check that $\vl(t)$ has the explicit form
\begin{equation}
\vl(t)=\frac{1}{\gamma\lambda}\sin(\gamma\lambda t)
\exp\left(\frac{1}{8\gamma^{2}}\int_{\al}^{t}\epl(s)\sin^{2}(\gamma\lambda s)\,ds\right)
\qquad
\forall t\in[\al,\bl].
\nonumber
\end{equation}

Since $\gamma\lambda\bl$ is an integer multiple of $2\pi$, we deduce that $\vl(\bl)=0$ and
\begin{equation}
\vl'(\bl)=\exp\left(\frac{1}{8\gamma^{2}}\int_{\al}^{\bl}\epl(s)\sin^{2}(\gamma\lambda s)\,ds\right).
\label{eqn:ul'}
\end{equation}

Let us estimate the integral from below. Since $\epl(s)$ is always nonnegative, we can limit ourselves to the subinterval $[2\al,\bl/2]$, where $\epl(s)=\oml/s$. We obtain that
\begin{eqnarray*}
\int_{\al}^{\bl}\epl(s)\sin^{2}(\gamma\lambda s)\,ds & \geq &
\oml\int_{2\al}^{\bl/2}\frac{\sin^{2}(\gamma\lambda s)}{s}\,ds \\[1ex]
& = & 
\frac{\oml}{2}\int_{2\al}^{\bl/2}\left(\frac{1}{s}-\frac{\cos(2\gamma\lambda s)}{s}\right)\,ds \\[1ex]
& = & \frac{\oml}{2}\left\{\log\left(\frac{\bl}{4\al}\right)-\int_{2\al}^{\bl/2}
\frac{\cos(2\gamma\lambda s)}{s}\,ds\right\}.
\end{eqnarray*}

In order to estimate the last integral we integrate by parts. Since $2\gamma\lambda s$ is an integer multiple of $\pi$ when $s=2\al$ and $s=\bl/2$, we obtain that
\begin{equation}
\int_{2\al}^{\bl/2}\frac{\cos(2\gamma\lambda s)}{s}\,ds=
\frac{1}{2\gamma\lambda}\int_{2\al}^{\bl/2}\frac{\sin(2\gamma\lambda s)}{s^{2}}\,ds\leq
\frac{1}{2\gamma\lambda}\cdot\frac{1}{2\al},
\nonumber
\end{equation}
and hence
\begin{equation}
\int_{\al}^{\bl}\epl(s)\sin^{2}(\gamma\lambda s)\,ds\geq
\frac{\oml}{2}\left\{\log\left(\frac{\bl}{4\al}\right)-\frac{1}{4\gamma\lambda\al}\right\}.
\nonumber
\end{equation}

Due to the limits (\ref{hp:abl-1b}), when $\lambda$ is large enough we deduce that
\begin{equation}
\int_{\al}^{\bl}\epl(s)\sin^{2}(\gamma\lambda s)\,ds\geq
\frac{\oml}{4}\log\left(\frac{\bl}{\al}\right)=
8\gamma^{2}\phi(\lambda).
\label{est:int-b-phi}
\end{equation}

Plugging this estimate into (\ref{eqn:ul'}) we conclude that
\begin{equation}
|\vl'(\bl)|^{2}+\gamma^{2}\lambda^{2}|\vl(\bl)|^{2}\geq
\exp\left(2\phi(\lambda)\right).
\nonumber
\end{equation}

\paragraph{\textmd{\textit{Estimate in $[\bl,T_{1}]$}}}

In this interval $\cl(t)\equiv\gamma^{2}$, and therefore throughout this interval the quantity $|\vl'(t)|^{2}+\gamma^{2}\lambda^{2}|\vl(t)|^{2}$ remains constant, so that again
\begin{equation}
|\vl'(T_{1})|^{2}+\gamma^{2}\lambda^{2}|\vl(T_{1})|^{2}\geq
\exp\left(2\phi(\lambda)\right).
\label{est:vl(delta)}
\end{equation}

\paragraph{\textmd{\textit{Estimate in $[T_{1},T_{0}]$}}}

In this interval $\cl(t)$ coincides with $c_{*}(t)$. If we consider the hyperbolic energy
\begin{equation}
\eh(t):=|\vl'(t)|^{2}+\lambda^{2}c_{*}(t)|\vl(t)|^{2},
\nonumber
\end{equation}
its time-derivative satisfies
\begin{equation}
\eh'(t)=\lambda^{2}c_{*}'(t)|\vl(t)|^{2}\geq
-\frac{|c_{*}'(t)|}{c_{*}(t)}\eh(t),
\nonumber
\end{equation}
from which we deduce that
\begin{equation}
\eh(t)\geq
\eh(T_{1})\exp\left(-\int_{T_{1}}^{T_{0}}\frac{|c_{*}'(s)|}{c_{*}(s)}\,ds\right)
\qquad
\forall t\in[T_{1},T_{0}].
\nonumber
\end{equation}

Finally, we observe that $\eh(T_{1})$ coincides with the left-hand side of (\ref{est:vl(delta)}), and therefore we conclude that
\begin{eqnarray*}
|\vl'(t)|^{2}+\lambda^{2}|\vl(t)|^{2} & \geq &
\min\left\{1,\frac{1}{\mu_{2}}\right\}\eh(t) \\[1ex]
& \geq & 
\min\left\{1,\frac{1}{\mu_{2}}\right\}\eh(T_{1})
\exp\left(-\int_{T_{1}}^{T_{0}}\frac{|c_{*}'(s)|}{c_{*}(s)}\,ds\right)  \\[0.5ex]
& \geq &
\min\left\{1,\frac{1}{\mu_{2}}\right\}
\exp\left(-\int_{T_{1}}^{T_{0}}\frac{|c_{*}'(s)|}{c_{*}(s)}\,ds\right)\exp(2\phi(\lambda))
\end{eqnarray*}
for every $t\in[T_{1},T_{0}]$, and a fortiori for every $t\in[\delta,T_{0}]$. This proves (\ref{th:as-act}) with
\begin{equation}
M_{\delta}:=\min\left\{1,\frac{1}{\mu_{2}}\right\}
\exp\left(-\int_{T_{1}}^{T_{0}}\frac{|c_{*}'(s)|}{c_{*}(s)}\,ds\right),
\nonumber
\end{equation}
and $\lambda$ large enough so that (\ref{est:int-b-phi}) holds true. We observe that $M_{\delta}$ does depend on $\delta$ since at the beginning we might have reduced $T_{1}$ in order to have $T_{1}<\delta$.
\end{proof}


\begin{prop}[Asymptotic activators -- Convergence]\label{prop:convergence}

Let $\chat(t)$, $\al$, $\bl$, $\omega(t)$, $\oml$, and $\cl(t)$ be as in the construction described above.

Let us assume that $\bl\to 0$ (and hence also $\al\to 0$ and $\oml\to+\infty$), and that
\begin{equation}
\lim_{\lambda\to +\infty}\frac{\oml}{\lambda\al}=0.
\label{hp:abl-1}
\end{equation}

Then the following statements hold true.

\begin{enumerate}
\renewcommand{\labelenumi}{(\arabic{enumi})}

\item  If $\chat\in\PSu(T_{0},\mu_{1},\mu_{2},\omega)$, then it turns out that $\cl\in\PSu(T_{0},\mu_{1},\mu_{2},\omega)$ when $\lambda$ is large enough, and 
\begin{equation}
\lim_{\lambda\to +\infty}d_{\PSu}(\cl,\chat)=0.
\nonumber
\end{equation}

\item  Let $\psi:(0,T_{0}]\to(0,+\infty)$ be a nonincreasing function such that
\begin{equation}
\lim_{\lambda\to +\infty}\frac{\lambda\bl}{\omega(\bl)}\exp(-\psi(\bl))=
\lim_{\lambda\to +\infty}\frac{\oml}{\omega(\bl)}\exp(-\psi(\bl))=0.
\label{hp:abl-2}
\end{equation}

If $\chat\in\PSd(T_{0},\mu_{1},\mu_{2},\omega,\psi)$, then it turns out that $\cl\in\PSd(T_{0},\mu_{1},\mu_{2},\omega,\psi)$ when $\lambda$ is large enough, and 
\begin{equation}
\lim_{\lambda\to +\infty}d_{\PSd}(\cl,\chat)=0.
\nonumber
\end{equation}

\end{enumerate}

\end{prop}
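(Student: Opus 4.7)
The plan is to exploit that $\cl-\chat$ is supported on $[\al,\bl]\subset[0,T_{1}]$, where $\chat$ is constant and therefore $\chat'=\chat''=0$. All estimates thus reduce to bounding, on $[\al,\bl]$, the three trigonometric building blocks in (\ref{defn:cl}), using throughout that $\oml\leq\omega(\bl)\leq\omega(t)$, $\psi(\bl)\leq\psi(t)$ and $t\leq\bl$ on this interval. As a preliminary I will record uniform pointwise bounds
\begin{equation}
|\epl(t)|\leq\frac{\oml}{t},
\qquad
|\epl'(t)|\leq\frac{C\oml}{t^{2}},
\qquad
|\epl''(t)|\leq\frac{C\oml}{t^{3}}
\qquad\forall t\in[\al,\bl],
\nonumber
\end{equation}
where $C$ depends only on $\theta$. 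These are immediate on the core $[2\al,\bl/2]$ where $\epl=\oml/t$; on the transition intervals $[\al,2\al]$ and $[\bl/2,\bl]$ they follow because the derivatives of $\theta$ introduce factors $1/\al$ or $2/\bl$, each of which is at most $2/t$ there.

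Next I differentiate the three summands in (\ref{defn:cl}) and collect dominant terms. For the first derivative, the largest contribution comes from differentiating $\sin(2\gamma\lambda t)$ in the first summand, giving $-\tfrac{1}{2}\epl(t)\cos(2\gamma\lambda t)$, of size at most $\oml/(2t)$; every other term carries an extra factor $1/(\lambda t)$ or $1/\lambda$. For the second derivative, the largest contribution comes from applying $(\sin(2\gamma\lambda t))''=-4\gamma^{2}\lambda^{2}\sin(2\gamma\lambda t)$ to the first summand, giving $\gamma\lambda\epl(t)\sin(2\gamma\lambda t)$, of size at most $\gamma\lambda\oml/t$; again all other contributions are lower order. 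Multiplying by the respective weights one gets
\begin{equation}
\frac{t^{2}}{\omega(t)}\,|\cl'(t)-\chat'(t)|\leq\frac{t\,\oml}{2\,\omega(t)}+o(1)\leq\frac{\bl}{2}+o(1),
\nonumber
\end{equation}
and, when $\chat\in\PSd$,
\begin{equation}
\frac{t^{3}\exp(-\psi(t))}{\omega(t)^{2}}\,|\cl''(t)-\chat''(t)|\leq\gamma\bl\cdot\frac{\lambda\bl\exp(-\psi(\bl))}{\omega(\bl)}+o(1).
\nonumber
\end{equation}
Both right-hand sides vanish as $\lambda\to+\infty$: the first because $\bl\to 0$, the second by (\ref{hp:abl-2}). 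The $o(1)$ lower-order contributions are all controlled using (\ref{hp:abl-1}), which in particular forces $\lambda\al\to+\infty$ and hence $\lambda^{2}\al\to+\infty$, together with $\omega(\bl)\to+\infty$ (a consequence of $\oml\to+\infty$ and $\oml\leq\omega(\bl)$) and $\oml\leq\log\lambda$. The uniform estimate $\|\cl-\chat\|_{\infty}\to 0$ is the crudest and follows from $|\cl(t)-\chat(t)|\leq\oml/(4\gamma\lambda\al)+O(1/\lambda)$ by the same bookkeeping.

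Finally I verify that $\cl$ belongs to the target class for large $\lambda$. The strict hyperbolicity (\ref{hp:c-sh}) follows from uniform convergence $\cl\to\chat$ and the fact that $\chat(t)=\gamma^{2}$ is strictly interior to $[\mu_{1},\mu_{2}]$ on $[0,T_{1}]$, while $\cl\equiv\chat$ outside the perturbation interval. The bound (\ref{hp:fdl-c'}) is inherited outside $[\al,\bl]$ and holds inside because the leading term of $|\cl'(t)|$ is at most $\oml/(2t)\leq\omega(t)/(2t)$, leaving ample room to absorb the vanishing lower-order corrections once $\lambda$ is large. The analogous check of (\ref{hp:fdl-c''}) in case (2) uses the second computation above combined with (\ref{hp:abl-2}). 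The main technical obstacle is purely bookkeeping: the three summands in (\ref{defn:cl}) and the product rule produce many cross-terms, and one must systematically check that each is dominated by (\ref{hp:abl-1}) or (\ref{hp:abl-2}); no single step is conceptually subtle, but the accounting must be organized so that no term is overlooked.
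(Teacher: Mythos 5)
Your overall strategy coincides with the paper's: bound $\epl$ and its derivatives by $C_{m}\oml/t^{m+1}$, differentiate the perturbation in (\ref{defn:cl}) term by term on $[\al,\bl]$ (where $\chat$ is constant), isolate the leading contributions, and absorb the remainder using the hypotheses. The treatment of $\|\cl-\chat\|_{\infty}$ and of the first derivative is correct and matches the paper, where the only term of full size $\oml/t$ is $\tfrac12\epl(t)\cos(2\gamma\lambda t)$ and the prefactor $\tfrac12<1$ leaves room for the $o(1)$ corrections.

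There is, however, one concrete slip in the second-derivative accounting. Besides the term $\gamma\lambda\epl(t)\sin(2\gamma\lambda t)$ that you single out, differentiating twice the trigonometric factor of the \emph{last} summand of (\ref{defn:cl}) produces a term equal to $\frac{\epl(t)^{2}}{16\gamma^{2}}\left(3\sin^{2}(\gamma\lambda t)\cos^{2}(\gamma\lambda t)-\sin^{4}(\gamma\lambda t)\right)$, of size up to $\oml^{2}/(4\gamma^{2}t^{2})$. Measured against the target $\omega(t)^{2}t^{-2}\exp(\psi(t))$ required for membership in $\PSd$, its contribution is of order $\frac{1}{4\gamma^{2}}\cdot\frac{\oml}{\omega(\bl)}\exp(-\psi(\bl))$, and this is \emph{not} $o(1)$ by virtue of (\ref{hp:abl-1}): when $\omega(\bl)\leq\log\lambda$ one has $\oml=\omega(\bl)$, and if $\psi$ is bounded the ratio is a fixed positive multiple of $1/(4\gamma^{2})$, which can exceed the available budget in (\ref{hp:fdl-c''}) when $\gamma$ is small. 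This term is precisely the reason the hypothesis (\ref{hp:abl-2}) contains a \emph{second} limit in addition to the one you use for $\gamma\lambda\epl\sin(2\gamma\lambda t)$; your hypothesis includes it, but your accounting files this term under the ``controlled by (\ref{hp:abl-1})'' bucket, where it does not belong. Routing it through the second condition of (\ref{hp:abl-2}) closes the argument exactly as in the paper. A minor related point: the second derivative of $\cl$ also involves $\epl'''$ (from the summand carrying $\epl'$), so your preliminary bounds must be stated up to $m=3$, i.e. $|\epl'''(t)|\leq C\oml/t^{4}$; the same cutoff computation gives this.
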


\begin{proof}

For the first statement we have to check that, when $\lambda$ is large enough, $\cl$ is of class $C^{1}$ in $[0,T_{0}]$ (we can include the origin because by definition $\cl$ is initially constant) and satisfies the two inequalities
\begin{equation}
\mu_{1}\leq\cl(t)\leq\mu_{2}
\qquad
\forall t\in[0,T_{0}],
\label{th:cl-sh}
\end{equation}
and
\begin{equation}
|\cl'(t)|\leq\frac{\omega(t)}{t}
\qquad
\forall t\in(0,T_{0}],
\label{th:cl'}
\end{equation}
and moreover $\cl(t)$ converges to $\chat(t)$ in the sense that
\begin{equation}
\cl(t)\to \chat(t)
\quad
\mbox{uniformly in }[0,T_{0}],
\label{th:cl-unif}
\end{equation}
and
\begin{equation}
\forall\tau\in(0,T_{0})
\qquad
\cl'(t)\to \chat'(t)
\quad
\mbox{uniformly in }[\tau,T_{0}],
\label{th:cl'-unif}
\end{equation}

For the second statement we have to check the same properties, and in addition that $\cl$ is of class $C^{2}$ in $[0,T_{0}]$ and satisfies the estimate
\begin{equation}
|\cl''(t)|\leq\frac{\omega(t)^{2}}{t^{2}}\exp\left(\psi(t)\right)
\qquad
\forall t\in(0,T_{0}],
\label{th:cl''}
\end{equation}
and moreover
\begin{equation}
\forall\tau\in(0,T_{0})
\qquad
\cl''(t)\to \chat''(t)
\qquad
\mbox{uniformly in }[\tau,T_{0}],
\label{th:cl''-unif}
\end{equation}

As for the regularity, we observe that $\cl$ has the same $C^{1}$ or $C^{2}$ regularity of $\chat$, because all the terms we added in (\ref{defn:cl}) are of class $C^{\infty}$. 

As for the uniform convergence in (\ref{th:cl'-unif}) and (\ref{th:cl''-unif}), we observe that it is substantially trivial because, when $\tau$ is fixed, it turns out that $\cl(t)$ and $\chat(t)$ coincide in $[\tau,T_{0}]$ as soon as $\bl\leq\tau$, and this is true eventually because $\bl\to 0$.

\paragraph{\textmd{\textit{Estimates on $\epl(t)$}}}

We prove that
\begin{equation}
|\epl(t)|\leq\frac{\oml}{t}
\qquad
\forall t\in[\al,\bl],
\label{est:epl}
\end{equation}
and that for every positive integer $m$ there exists a constant $K_{m}$ such that the $m$-th derivative $\epl^{(m)}(t)$ satisfies (actually we need the estimate just for $m=1,2,3$)
\begin{equation}
|\epl^{(m)}(t)|\leq K_{m}\frac{\oml}{t^{m+1}}
\qquad
\forall t\in[\al,\bl].
\label{est:epl'}
\end{equation}

Indeed, estimate (\ref{est:epl}) follows from the definition of $\epl(t)$ because $\|\theta\|_{\infty}\leq 1$. As for (\ref{est:epl'}), we distinguish three cases. In the central interval $[2\al,\bl/2]$ we have the explicit expression $\epl(t)=\oml/t$, and therefore also derivatives can be explicitly computed, and they satisfy (\ref{est:epl'}). 

In the interval $[\al,2\al]$ we use Leibniz rule, and we obtain that
\begin{equation}
\epl^{(m)}(t)=
\oml\sum_{i=0}^{m}\binom{m}{i}\theta^{(i)}\left(\frac{t-\al}{\al}\right)
\frac{1}{\al^{i}}\cdot(-1)^{m-i}\frac{1}{t^{m-i+1}}(m-i)!
\nonumber
\end{equation}
for every $t\in[\al,2\al]$. Since $\al\geq t/2$ in this interval, we deduce that
\begin{equation}
\left|\epl^{(m)}(t)\right|\leq\oml
\sum_{i=0}^{m}\binom{m}{i}\|\theta^{(i)}\|_{\infty}
\frac{2^{i}}{t^{i}}\cdot\frac{1}{t^{m-i+1}}(m-i)!
\qquad
\forall t\in[\al,2\al],
\nonumber
\end{equation}
so that (\ref{est:epl'}) holds true in this interval with a suitable $K_{m}$. In the interval $[\bl/2,\bl]$ the proof is analogous.

\paragraph{\textmd{\textit{Estimates and convergence for $\cl(t)$}}}

We prove (\ref{th:cl-unif}), which in turn implies (\ref{th:cl-sh}) because $\cl(t)$ and $\chat(t)$ differ only in $[\al,\bl]$, and for $\lambda$ large this interval is contained in the initial interval $[0,T_{1}]$ where $\chat(t)$ is constant and strictly between $\mu_{1}$ and $\mu_{2}$.

In the interval $[\al,\bl]$ we exploit (\ref{est:epl}) and (\ref{est:epl'}) with $m=1$, and we deduce that
\begin{eqnarray*}
|\cl(t)-\chat(t)| & \leq &
\frac{1}{4\gamma\lambda}|\epl(t)|+
\frac{1}{8\gamma^{2}\lambda^{2}}|\epl'(t)|+
\frac{1}{64\gamma^{4}\lambda^{2}}|\epl(t)|^{2} \\[0.5ex]
& \leq & 
\frac{1}{4\gamma\lambda}\cdot\frac{\oml}{t}+
\frac{1}{8\gamma^{2}\lambda^{2}}\cdot\frac{K_{1}\oml}{t^{2}}+
\frac{1}{64\gamma^{4}\lambda^{2}}\cdot\frac{\oml^{2}}{t^{2}} \\[0.5ex]
& \leq & 
\frac{1}{4\gamma}\cdot\frac{\oml}{\lambda\al}+
\frac{K_{1}}{8\gamma^{2}}\cdot\frac{\oml}{\lambda^{2}\al^{2}}+
\frac{1}{64\gamma^{4}}\cdot\frac{\oml^{2}}{\lambda^{2}\al^{2}}. 
\end{eqnarray*}

From assumption (\ref{hp:abl-1}) it follows that all terms tend to~0 as $\lambda\to+\infty$, as required.

\paragraph{\textmd{\textit{Estimates on first derivatives}}}

We prove that estimate (\ref{th:cl'}) holds true when $\lambda$ is large enough. To this end, we can limit ourselves to the interval $[\al,\bl]$, because otherwise $\cl'(t)$ coincides with $\chat'(t)$, which satisfies the required estimate. Computing the time-derivative of $\cl(t)$, we discover that
\begin{equation}
\cl'(t)=-\sum_{i=1}^{5}L_{i}(t),
\nonumber
\end{equation}
where
\begin{gather*}
L_{1}(t):=\frac{1}{2}\epl(t)\cos(2\gamma\lambda t),
\qquad
L_{2}(t):=\frac{1}{16\gamma^{3}\lambda}\epl(t)^{2}\sin^{3}(\gamma\lambda t)\cos(\gamma\lambda t),  \\[1ex]
L_{3}(t):=\frac{3}{8\gamma\lambda}\epl'(t)\sin(2\gamma\lambda t),
\qquad
L_{4}(t):=\frac{1}{32\gamma^{4}\lambda^{2}}\epl(t)\epl'(t)\sin^{4}(\gamma\lambda t),  \\[1ex]
L_{5}(t):=\frac{1}{8\gamma^{2}\lambda^{2}}\epl''(t)\sin^{2}(\gamma\lambda t).
\end{gather*}

All these terms can be estimates by exploiting (\ref{est:epl}) and (\ref{est:epl'}) with $m=1,2$, and the fact that $t\geq\al$ and $\oml\leq\omega(t)$ for every $t\in[\al,\bl]$. We obtain that
\begin{gather*}
|L_{1}(t)|\leq
\frac{1}{2}\epl(t)\leq
\frac{1}{2}\frac{\oml}{t}\leq
\frac{1}{2}\frac{\omega(t)}{t},  \\[1ex]
|L_{2}(t)|\leq
\frac{1}{16\gamma^{3}\lambda}\epl(t)^{2}\leq
\frac{1}{16\gamma^{3}\lambda}\cdot\frac{\oml^{2}}{t^{2}}\leq
\frac{1}{16\gamma^{3}}\cdot\frac{\oml}{\lambda\al}\cdot\frac{\omega(t)}{t},  \\[1ex]
|L_{3}(t)|\leq
\frac{3}{8\gamma\lambda}|\epl'(t)|\leq
\frac{3}{8\gamma\lambda}\cdot\frac{K_{1}\oml}{t^{2}}\leq
\frac{3K_{1}}{8\gamma}\cdot\frac{1}{\lambda\al}\cdot\frac{\omega(t)}{t},  \\[1ex]
|L_{4}(t)|\leq
\frac{1}{32\gamma^{4}\lambda^{2}}|\epl(t)|\cdot|\epl'(t)|\leq
\frac{1}{32\gamma^{4}\lambda^{2}}\cdot\frac{K_{1}\oml^{2}}{t^{3}}\leq
\frac{K_{1}}{32\gamma^{4}}\cdot\frac{\oml}{\lambda^{2}\al^{2}}\cdot\frac{\omega(t)}{t},  \\[1ex]
|L_{5}(t)|\leq
\frac{1}{8\gamma^{2}\lambda^{2}}|\epl''(t)|\leq
\frac{1}{8\gamma^{2}\lambda^{2}}\cdot\frac{K_{2}\oml}{t^{3}}\leq
\frac{K_{2}}{8\gamma^{2}}\cdot\frac{1}{\lambda^{2}\al^{2}}\cdot\frac{\omega(t)}{t}.
\end{gather*}

From all these estimates we conclude that
\begin{equation}
|\cl'(t)|\leq
\left(\frac{1}{2}+G_{1}(\lambda)\right)\frac{\omega(t)}{t}
\qquad
\forall t\in[\al,\bl],
\nonumber
\end{equation}
where $G_{1}(\lambda)\to 0$ as $\lambda\to +\infty$ because of (\ref{hp:abl-1}). This implies that (\ref{th:cl'}) holds true when $\lambda$ is large enough.

\paragraph{\textmd{\textit{Estimates on second derivatives}}}

We prove that (\ref{th:cl''}) holds true when $\lambda$ is large enough. As in the case of first derivatives, we can limit ourselves to the interval $[\al,\bl]$, because otherwise $\cl''(t)$ coincides with $\chat''(t)$, which satisfies the required estimate. Computing the second order time-derivative of $\cl(t)$, we discover that
\begin{equation}
\cl''(t)=-\sum_{i=1}^{8}L_{i}(t),
\nonumber
\end{equation}
where now
\begin{gather*}
L_{1}(t):=-\gamma\lambda\epl(t)\sin(2\gamma\lambda t),
\qquad
L_{2}(t):=\frac{\epl(t)^{2}}{16\gamma^{2}}\left(-\sin^{4}(\gamma\lambda t)+3\sin^{2}(\gamma\lambda t)\cos^{2}(\gamma\lambda t)\right),  \\[1ex]
L_{3}(t):=\frac{5}{4}\epl'(t)\cos(2\gamma\lambda t),
\qquad
L_{4}(t):=\frac{1}{32\gamma^{4}\lambda^{2}}\epl'(t)^{2}\sin^{4}(\gamma\lambda t),  \\[1ex]
L_{5}(t):=\frac{1}{4\gamma^{3}\lambda}\epl(t)\epl'(t)\sin^{3}(\gamma\lambda t)\cos(\gamma\lambda t),
\qquad
L_{6}(t):=\frac{1}{2\gamma\lambda}\epl''(t)\sin(2\gamma\lambda t),  \\[1ex]
L_{7}(t):=\frac{1}{32\gamma^{4}\lambda^{2}}\epl(t)\epl''(t)\sin^{4}(\gamma\lambda t),
\qquad
L_{8}(t):=\frac{1}{8\gamma^{2}\lambda^{2}}\epl'''(t)\sin^{2}(\gamma\lambda t).
\end{gather*}

As we did in the case of first derivatives, all these terms can be estimates by exploiting (\ref{est:epl}) and (\ref{est:epl'}) with $m=1,2,3$, and the fact that $t\geq\al$ and $\oml\leq\omega(t)$ for every $t\in[\al,\bl]$. In particular, for the terms from $L_{3}(t)$ to $L_{8}(t)$ we obtain that
\begin{equation}
\sum_{i=3}^{8}|L_{i}(t)|\leq G_{2}(\lambda)\frac{\omega(t)^{2}}{t^{2}},
\nonumber
\end{equation}
where $G_{2}(\lambda)\to 0$ as $\lambda\to +\infty$ because of assumption (\ref{hp:abl-1}) and the fact that $\oml\to +\infty$. The first two terms require more delicate estimates, at least in the case where $\psi(t)$ is bounded. We estimate them as
\begin{equation}
|L_{2}(t)|\leq
\frac{1}{4\gamma^{2}}\epl(t)^{2}\leq
\frac{1}{4\gamma^{2}}\cdot\frac{\oml^{2}}{t^{2}}=
\frac{1}{4\gamma^{2}}\cdot\frac{\oml}{\omega(\bl)}\cdot\frac{\oml\omega(\bl)}{t^{2}}\leq
\frac{1}{4\gamma^{2}}\cdot\frac{\oml}{\omega(\bl)}\cdot\frac{\omega(t)^{2}}{t^{2}},
\nonumber
\end{equation}
and
\begin{equation}
|L_{1}(t)|\leq
\gamma\lambda\epl(t)\leq
\gamma\lambda\cdot\frac{\oml}{t}=
\gamma\lambda\frac{\bl}{\omega(\bl)}\cdot\frac{\oml\omega(\bl)}{\bl t}\leq
\gamma\frac{\lambda\bl}{\omega(\bl)}\cdot\frac{\omega(t)^{2}}{t^{2}}.
\nonumber
\end{equation}

From all these estimates we conclude that
\begin{equation}
|\cl''(t)|\leq
G_{3}(\lambda)\frac{\omega(t)^{2}}{t^{2}}\exp\left(\psi(\bl)\right)\leq
G_{3}(\lambda)\frac{\omega(t)^{2}}{t^{2}}\exp\left(\psi(t)\right),
\nonumber
\end{equation}
where
\begin{equation}
G_{3}(\lambda):=\left(G_{2}(\lambda)+
\frac{1}{4\gamma^{2}}\cdot\frac{\oml}{\omega(\bl)}+
\gamma\frac{\lambda\bl}{\omega(\bl)}
\right)\exp\left(-\psi(\bl)\right).
\nonumber
\end{equation}

Due to (\ref{hp:abl-2}) we conclude that $G_{3}(\lambda)\to 0$ as $\lambda\to +\infty$, and therefore (\ref{th:cl''}) is satisfied when $\lambda$ is large enough.
\end{proof}


\subsection{Choice of the parameters}\label{sec:parameters}

The proof of Theorem~\ref{thm:counter-c'} has been reduced to finding $\al\to 0$ and $\bl\to 0$ satisfying (\ref{hp:al-bl-1}) and (\ref{hp:al-bl-2}) for $\lambda$ large enough, and the conditions
\begin{equation}
\lim_{\lambda\to +\infty}\frac{\oml}{\lambda\al}=0,
\qquad
\lim_{\lambda\to +\infty}\frac{\bl}{\al}=+\infty,
\qquad
\lim_{\lambda\to +\infty}\frac{\oml}{\log\lambda}\log\left(\frac{\bl}{\al}\right)=+\infty,
\label{hp:al-bl-lim}
\end{equation}
where $\oml$ is defined by (\ref{defn:oml}). In the case of Theorem~\ref{thm:counter-c''} we need also the further conditions coming from (\ref{hp:abl-2}), namely 
\begin{equation}
\lim_{\lambda\to +\infty}\frac{\lambda\bl\exp(-\psi(\bl))}{\omega(\bl)}=0,
\qquad\qquad
\lim_{\lambda\to +\infty}\frac{\oml\exp(-\psi(\bl))}{\omega(\bl)}=0.
\label{hp:al-bl-lim-c''}
\end{equation}

Indeed, from Proposition~\ref{prop:growth} we know that the limits (\ref{hp:al-bl-lim}) guarantee that the family $\{\cl\}$ defined by (\ref{defn:cl}) is a family of asymptotic activators with a rate $\phi(\lambda)$ satisfying (\ref{hp:phi-log}). Moreover, from Proposition~\ref{prop:convergence} we know that the first limit in (\ref{hp:al-bl-lim}) guarantees the convergence of $\cl$ to $\chat$ in $\PSu$, while (\ref{hp:al-bl-lim-c''}) guarantees the convergence also in $\PSd$. Since $\chat$ is a generic element of a dense subset (Proposition~\ref{prop:icf}), this is enough to show that for every unbounded sequence $\{\lambda_{n}\}$ there exist a residual set of universal activators with a rate satisfying (\ref{hp:phi-log}). Finally, from Proposition~\ref{prop:ua2idl} we know that these universal activators cause an infinite derivative loss.

\paragraph{\textmd{\textit{Parameters for Theorem~\ref{thm:counter-c'}}}}

In this case we set 
\begin{equation}
\al:=\frac{2\pi}{\gamma\lambda}\left\lfloor\lambda^{1/4}\right\rfloor,
\qquad\qquad
\bl:=\frac{2\pi}{\gamma\lambda}\left\lfloor\lambda^{1/2}\right\rfloor,
\nonumber
\end{equation}
where $\lfloor\alpha\rfloor$ stands for the integer part of a real number $\alpha$.

Properties (\ref{hp:al-bl-1}) and (\ref{hp:al-bl-2}) are almost evident from the definition. For the first limit in (\ref{hp:al-bl-lim}), it is enough to recall that $\oml\leq\log\lambda$ and observe that $\lambda\al\sim\lambda^{1/4}$, up to multiplicative constants. For the second limit, we observe that $\bl/\al\sim\lambda^{1/4}$, and this proves also the third limit because $\log(\bl/\al)\sim\log\lambda$, and $\oml\to +\infty$ as $\lambda\to +\infty$.

\paragraph{\textmd{\textit{Parameters for Theorem~\ref{thm:counter-c''}}}}

In this case the choice of parameters is more subtle. To begin with, we observe that the limits in assumption (\ref{hp:counter-c''}) imply that
\begin{equation}
\oml\to +\infty
\qquad\quad\mbox{and}\quad\qquad
\Gamma_{\lambda}:=\frac{\omega(\lambda^{-1/2})\psi(\lambda^{-1/2})}{\log\lambda}\to +\infty.
\nonumber
\end{equation}

Now we set
\begin{equation}
\al:=\frac{2\pi}{\gamma\lambda}\left\lfloor\log\lambda\exp(\psil)\right\rfloor,
\qquad\qquad
\bl:=\frac{2\pi}{\gamma\lambda}\left\lfloor\log\lambda\exp(2\psil)\right\rfloor,
\nonumber
\end{equation}
where
\begin{equation}
\psil:=\min\left\{\frac{1}{8}\log\lambda,
\frac{1}{4}\psi\left(\frac{1}{\sqrt{\lambda}}\right)+\frac{1}{4}\log\Gamma_{\lambda}\right\}.
\label{defn:psil}
\end{equation}

We observe that $\psil\to +\infty$ and $\exp(2\psil)\leq\lambda^{1/4}$, and therefore
\begin{equation}
\bl\leq\frac{1}{\sqrt{\lambda}},
\label{est:bl-sqrt}
\end{equation}
when $\lambda$ is large enough. This proves in particular that $\al\to 0$ and $\bl\to 0$. Also properties (\ref{hp:al-bl-1}) and (\ref{hp:al-bl-2}) follow almost immediately from definition (\ref{defn:psil}) when $\lambda$ is large enough. Let us check the limits in (\ref{hp:al-bl-lim}) and (\ref{hp:al-bl-lim-c''}).

\begin{itemize}

\item For the first limit in (\ref{hp:al-bl-lim}) we observe that, up to multiplicative constants,
\begin{equation}
\frac{\oml}{\lambda\al}\sim
\frac{\oml}{\log\lambda}\exp(-\psil),
\nonumber
\end{equation}
and we conclude because $\oml\leq\log\lambda$ and $\psil\to +\infty$.

\item  The second limit in (\ref{hp:al-bl-lim}) is immediate because $\psil\to +\infty$.

\item  For the third limit in (\ref{hp:al-bl-lim}) we observe that
\begin{equation}
\frac{\oml}{\log\lambda}\log\left(\frac{\bl}{\al}\right)\sim
\frac{\oml\psil}{\log\lambda},
\nonumber
\end{equation}
and we claim that
\begin{equation}
\frac{\oml\psil}{\log\lambda}\geq
\min\left\{\frac{\oml}{8},\psil,\frac{\Gamma_{\lambda}}{4}\right\},
\label{claim:3-min}
\end{equation}
which implies the required limit because the three terms in the minimum tend to $+\infty$. In order to prove the claim, we distinguish three cases. If $\oml=\log\lambda$ or $\psil=\frac{1}{8}\log\lambda$, then (\ref{claim:3-min}) is almost immediate. It remains the case where
\begin{equation}
\oml=\omega(\bl)\geq\omega\left(\frac{1}{\sqrt{\lambda}}\right)
\nonumber
\end{equation}
where the inequality follows from (\ref{est:bl-sqrt}) and the monotonicity of $\omega$, and
\begin{equation}
\psil=\frac{1}{4}\psi\left(\frac{1}{\sqrt{\lambda}}\right)+\frac{1}{4}\log\Gamma_{\lambda}\geq
\frac{1}{4}\psi\left(\frac{1}{\sqrt{\lambda}}\right),
\nonumber
\end{equation}
in which case $\oml\psil/\log\lambda\geq\Gamma_{\lambda}/4$.

\item  In the first limit of (\ref{hp:al-bl-lim-c''}) we exploit again (\ref{est:bl-sqrt}), and thus from the monotonicity of $\omega$ and $\psi$ we deduce that
\begin{eqnarray*}
\frac{\lambda\bl}{\omega(\bl)}\exp(-\psi(\bl)) & \leq &
\frac{\lambda\bl}{\omega\left(\lambda^{-1/2}\right)}\exp(-\psi\left(\lambda^{-1/2})\right) \\[0.5ex]
& \leq & 
\frac{2\pi}{\gamma}\frac{\log\lambda}{\omega\left(\lambda^{-1/2}\right)}
\exp\left(2\psil\right)\exp\left(-\psi(\lambda^{-1/2})\right).
\end{eqnarray*}

On the other hand
\begin{eqnarray*}
\frac{\log\lambda}{\omega\left(\lambda^{-1/2}\right)}
\exp\left(2\psil-\psi(\lambda^{-1/2})\right) & \leq &
\frac{\log\lambda}{\omega\left(\lambda^{-1/2}\right)}
\exp\left(\frac{1}{2}\log\Gamma_{\lambda}-\frac{1}{2}\psi(\lambda^{-1/2})\right) \\[0.5ex]
& = & 
\frac{1}{\Gamma_{\lambda}^{1/2}}\cdot
\psi(\lambda^{-1/2})\exp\left(-\frac{1}{2}\psi(\lambda^{-1/2})\right),
\end{eqnarray*}
and the latter tends to 0 because $\Gamma_{\lambda}\to +\infty$ and the function $t\to t\exp(-t/2)$ is bounded for positive $t$.

\item  In the second limit of (\ref{hp:al-bl-lim-c''}) with an analogous argument we obtain that
\begin{equation}
\frac{\oml\exp(-\psi(\bl))}{\omega(\bl)}=
\frac{\oml}{\omega(\bl)\psi(\bl)}\cdot\psi(\bl)\exp(-\psi(\bl))\leq
\frac{1}{\Gamma_{\lambda}}\cdot\psi(\bl)\exp(-\psi(\bl)),
\nonumber
\end{equation} 
and again we conclude as above.

\end{itemize} 


\subsection{An ``explicit'' counterexample}\label{sec:iteration}

In this final subsection we show how one could combine our tools in order to provide a more ``explicit'' counterexample through an iterative procedure in the same spirit of the original construction introduced in~\cite{dgcs}. We limit ourselves to sketching the procedure in the case of Theorem~\ref{thm:counter-c'}, but the other case is analogous.

Let us start with the constant propagation speed $\gamma_{0}(t)\equiv(\mu_{1}+\mu_{2})/2$, and let us consider the family $\{\cl(t)\}$ defined by (\ref{defn:cl}) starting with $\chat(t):=\gamma_{0}(t)$. From Proposition~\ref{prop:growth} and Proposition~\ref{prop:convergence} we know that there exists a large enough $\lambda_{1}$ such that $c_{\lambda_{1}}(t)$ satisfies (\ref{hp:c-sh}) and (\ref{hp:fdl-c'}) with \emph{strict inequalities}, and the corresponding solution $u_{\lambda_{1}}(t)$ to (\ref{eqn:uc-lambda})--(\ref{data:ode-lambda}) satisfies
\begin{equation}
|u_{\lambda_{1}}'(t)|^{2}+\lambda_{1}^{2}|u_{\lambda_{1}}(t)|^{2}>
\exp(\phi(\lambda_{1}))
\qquad
\forall t\geq\frac{T_{0}}{2},
\label{activ-1}
\end{equation}
where $\phi$ is defined by (\ref{defn:phi}). We stress that again the \emph{inequality is strict}, and by compactness the ``strictness'' (here and before) is uniform in $t$. At this point we set $\gamma_{1}(t):=c_{\lambda_{1}}(t)$, and we interpret (\ref{activ-1}) by saying that $\gamma_{1}(t)$ activates the frequency $\lambda_{1}$ for $t\geq T_{0}/2$.

Now we consider a new family $\{\cl(t)\}$ defined again by (\ref{defn:cl}), but this time starting with $\chat(t):=\gamma_{1}(t)$. This is possible because $\gamma_{1}(t)$ is initially constant. Again we can find a large enough $\lambda_{2}$ such that $c_{\lambda_{2}}(t)$ satisfies (\ref{hp:c-sh}) and (\ref{hp:fdl-c'}) with strict inequalities, and the corresponding solution $u_{\lambda_{2}}(t)$ to (\ref{eqn:uc-lambda})--(\ref{data:ode-lambda}) satisfies
\begin{equation}
|u_{\lambda_{2}}'(t)|^{2}+\lambda_{2}^{2}|u_{\lambda_{2}}(t)|^{2}>
\exp(\phi(\lambda_{2}))
\qquad
\forall t\geq\frac{T_{0}}{4}.
\label{activ-2}
\end{equation}

At this point we set $\gamma_{2}(t):=c_{\lambda_{2}}(t)$, and we interpret (\ref{activ-2}) by saying that $\gamma_{2}(t)$ activates the frequency $\lambda_{2}$ for $t\geq T_{0}/4$. On the other hand, $\gamma_{2}(t)$ is as close as we need to $\gamma_{1}(t)$ in the uniform norm, and hence also the solution to
\begin{equation}
u_{\lambda_{1}}''(t)+\lambda_{1}^{2}\gamma_{2}(t)u_{\lambda_{1}}(t)=0
\nonumber
\end{equation}
(note that here we have $\lambda_{1}$ and $\gamma_{2}(t)$ in the same equation) with the usual initial data satisfies again the strict inequality (\ref{activ-1}), even if possibly in a ``less strict'' way. In other words, the new propagation speed $\gamma_{2}(t)$ activates in the same time the frequency $\lambda_{1}$ for $t\geq T_{0}/2$, and the frequency $\lambda_{2}$ for $t\geq T_{0}/4$.

This procedure can be iterated. More precisely, at step~$n$ we obtain a propagation speed $\gamma_{n}(t)$ and a real number $\lambda_{n}$ such that
\begin{itemize}

\item $\lambda_{n}\geq n$ (this guarantees that $\lambda_{n}\to +\infty$),

\item  $\gamma_{n}(t)$ is initially constant,

\item  $\gamma_{n}(t)=\gamma_{n-1}(t)$ for every $t\geq 2^{-n}$, and $|\gamma_{n}(t)-\gamma_{n-1}(t)|\leq 2^{-n}$ for every $t\in[0,2^{-n}]$,

\item  $\gamma_{n}(t)$ satisfies (\ref{hp:c-sh}) and (\ref{hp:fdl-c'}) with strict inequalities,

\item  solutions to (\ref{eqn:u-lambda}) with $c(t):=\gamma_{n}(t)$ and initial data (\ref{data:ode-lambda}) satisfy
\begin{equation}
|u_{\lambda_{i}}'(t)|^{2}+\lambda_{i}^{2}|u_{\lambda_{i}}(t)|^{2}>
\exp(\phi(\lambda_{i}))
\qquad
\forall t\geq\frac{T_{0}}{2^{i}}
\nonumber
\end{equation}
for every $i=1,\ldots,n$.

\end{itemize}

At this point it is possible to conclude that $\gamma_{n}(t)$ converges uniformly to some propagation speed $\gamma_{\infty}(t)$, which turns out to be a universal activator for the sequence $\{\lambda_{n}\}$.


\subsubsection*{\centering Acknowledgments}

Both authors are members of the \selectlanguage{italian} ``Gruppo Nazionale per l'Analisi Matematica, la Probabilit\`{a} e le loro Applicazioni'' (GNAMPA) of the ``Istituto Nazionale di Alta Matematica'' (INdAM). 

\selectlanguage{english}



\label{NumeroPagine}

\end{document}